\DeclareSymbolFont{rsfs}{U}{rsfs}{m}{n}
\DeclareSymbolFontAlphabet{\mathscrsfs}{rsfs}
\newtheorem{theorem}{Theorem}[section]
\newtheorem{lemma}[theorem]{Lemma}
\newtheorem{proposition}[theorem]{Proposition}
\newtheorem{corollary}[theorem]{Corollary}
\theoremstyle{definition}
\newtheorem{definition}{Definition}
\newtheorem{remark}[theorem]{Remark}
\numberwithin{equation}{section}
\newcommand{\bea}{\begin{eqnarray}}
\newcommand{\eea}{\end{eqnarray}}
\newcommand{\<}{\langle}
\renewcommand{\>}{\rangle}
\newcommand{\wt}{\widetilde}
\newcommand{\wh}{\widehat}
\def\ie{\text{i.e.~}}
\def\iid{\text{i.i.d.~}}
\def\eps{{\varepsilon}}
\def\con{{\small\mathsf{con}}}
\def\ord{{\small\mathsf{ord}}}
\def\stoc{{\small\mathsf{stoc}}}
\def\PSD{{\small\mathsf{PSD}}}
\def\cG{{\mathcal G}}
\def\cC{{\mathcal C}}
\def\cO{{\mathcal O}}
\def\de{{\rm d}}
\def\<{\langle}
\def\>{\rangle}
\def\diam{{\rm diam}}
\def\b0{{\boldsymbol{0}}}
\def\cI{{\mathcal I}}
\def\cS{{\mathcal S}}
\def\cB{{\mathcal B}}
\renewcommand{\b}{\mathbf{b}}
\def\lrarrow{\leftrightarrow}
\def\lt{\left}
\def\rt{\right}
\def\la{\langle}
\def\ra{\rangle}
\def\eps{\varepsilon}
\def\bbE{{\mathbb{E}}}
\def\bbN{{\mathbb{N}}}
\def\bbP{{\mathbb{P}}}
\def\bbR{{\mathbb{R}}}
\def\bbS{{\mathbb{S}}}
\def\bbZ{{\mathbb{Z}}}
\def\cB{{\mathcal{B}}}
\def\cP{{\mathcal{P}}}
\author{
    Mark Sellke
}
\title{Localization of Random Surfaces with Monotone Potentials
\\
and an FKG-Gaussian Correlation Inequality}
\date{}
\begin{document}

\maketitle

\begin{abstract}
    \noindent
    The seminal 1975 work of Brascamp, Lieb and Lebowitz 
    initiated the rigorous study of Ginzberg--Landau random surface models.  
    It was conjectured therein that fluctuations are \emph{localized} on $\bbZ^d$ when $d\geq 3$ for very general potentials, matching the behavior of the Gaussian free field.
    We confirm this behavior for all even potentials $U:\bbR\to\bbR$ satisfying $U'(x)\geq \min\big(\eps x,\frac{1+\eps}{x}\big)$ on $x\in \bbR^+$.
    Given correspondingly stronger growth conditions on $U$, we show power or stretched exponential tail bounds on all transient graphs, which determine the maximum field value up to constants in many cases.
    Further extensions include non-wired boundary conditions and iterated Laplacian analogs such as the membrane model.
    Our main tool is an FKG-based generalization of the Gaussian correlation inequality, which is used to dominate the finite-volume Gibbs measures by mixtures of centered Gaussian fields.
\end{abstract}

\section{Model and Main Results}

We study the discrete Ginzberg--Landau or $\nabla\phi$ model on a locally finite, connected graph $G$.
For each edge $e=\{v,v'\}\in E(G)$, let $U_e:\bbR\to\bbR$ be a potential function with $U_e(x)=U_e(-x)$ for all $x\in \bbR$.
Given a finite subset $\Lambda\subset V(G)$, let $G_{\Lambda}$ be the multi-graph obtained by contracting $V(G)\backslash \Lambda$ to a single vertex $z_{\Lambda}$.
Our main objects of study will be the wired Gibbs measures on fields $\phi:G_{\Lambda}\to\bbR$, where we pin $\phi(z_{\Lambda})=0$:
\begin{equation}
\label{eq:model-def}
    \de\mu_{G_{\Lambda}, \vec U}(\phi)
    \equiv
    \frac{1}{Z_{G_{\Lambda}, \vec U}} 
    \exp \bigg(
    -\sum_{e=\{v,v'\}\in E(G_{\Lambda})} U_e\big(\phi(v)-\phi(v')\big)
    \bigg) 
    \delta_0(\phi(z_{\Lambda}))
    \prod_{v \in \Lambda} \de \phi(v)
    .
\end{equation}
Here and throughout we write $\vec U$ for $(U_e)_{e\in E(G)}$.

When $U_e(x)=cx^2$ for all $e\in E(G)$, \eqref{eq:model-def} is the discrete Gaussian free field which is well understood. 
It is natural to expect some degree of universality: the large scale behavior should depend primarily on the graph $G$ rather than the potential $U$.
We focus on one of the most basic such questions, namely the order of fluctuations of $\phi(v)$.

This problem was put forward in \cite{brascamp1975statistical} (in the case that the potential $U$ is the same for all edges), where it was shown by a Mermin--Wagner argument that when $\Lambda=[-L,\dots,L]^2\subseteq\bbZ^2$, the fluctuations of $\phi(\vec 0)$ have order $\Omega(\sqrt{\log L})$ for extremely general $U$; see also \cite{frohlich1981absence,ioffe20022d,milos2015delocalization} for extensions.
For $d\geq 3$, under the much stronger assumption that $U$ is \emph{uniformly} convex on $\bbR$ (with uniformly positive second derivative), \cite{brascamp1975statistical} showed the fluctuations of $\phi(v)$ are \emph{localized} at scale $O(1)$ independently of $\Lambda$.
However they conjectured that localization should hold much more broadly, perhaps for all even $U:\bbR\to\bbR$ such that $\int_{\bbR} e^{-\alpha U(x)}\de x<\infty$ for all $\alpha>0$.
This conjecture has remained a significant outstanding challenge, and was also highlighted in a more recent survey as \cite[Open Problem 1]{velenik2006localization}. 
As reviewed in Subsection~\ref{subsec:related} below, even the case of (non-uniformly) convex potentials has not been fully resolved despite much effort. For instance a notable recent result of \cite{magazinov2022concentration} proved localization on the torus for convex $U$ such that $U''(x)>0$ for almost every $x$.

Our first main result, Theorem~\ref{thm:main} below, establishes localization for a large class of potentials. 
All that is required of $U$ is a \emph{monotonicity} condition, which is qualitatively more permissive than convexity.
The $1+\eps$ factor below is natural as it ensures integrability of $e^{-U(x)}$ so that the Gibbs measure \eqref{eq:model-def} is defined for all $G$. 
(Indeed this does not even imply the integrability condition mentioned above, which corresponds to super-logarithmic growth.)

\begin{definition}
\label{def:eps-monotone}
    The increasing function $U:[0,\infty)\to [0,\infty)$ is $\eps$-monotone if at all points of differentiability,
    \[
    U'(x)\geq 
    \min\Big(\eps x,\frac{1+\eps}{x}\Big).
    \]
    The even function $U:\bbR\to [0,\infty)$ is $\eps$-monotone if $U|_{[0,\infty)}$ is.
    $\vec U=(U_e)_{e\in E(G)}$ is $\eps$-monotone if each $U_e$ is.
\end{definition}

\begin{definition}
\label{def:percolation-graphs}
    The connected locally finite graph $G$ is percolation transient if for some $p=p(G)<1$, $p$-Bernoulli bond percolation on $G$ almost surely has at least one transient infinite cluster. 
\end{definition}

Percolation transience of $\bbZ^d$ for $d\geq 3$ was famously shown in \cite{grimmett1993random}.
Recently, Hutchcroft \cite{hutchcroft2023transience} has shown this property for all \emph{transitive} transient graphs.
Our first main result shows localization for $\eps$-monotone $\vec U$ on such graphs.

\begin{theorem}
\label{thm:main}
    Fix $\eps>0$ and let $G$ be a percolation transient graph with $v\in V(G)$. 
    Then as the $\eps$-monotone $\vec U$ and vertex subset $\Lambda\subseteq V(G)$ vary, the set of $\mu_{G_{\Lambda},\vec U}$-laws of $\phi(v)$ is tight.
\end{theorem}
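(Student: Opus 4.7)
The plan is to establish tightness via a stochastic comparison: I will dominate $\mu_{G_\Lambda,\vec U}$ (for events of the form $\{|\phi(v)|>t\}$) by a mixture of centered Gaussian free fields on $G_\Lambda$ with random edge conductances $c_e$, and then use percolation transience of $G$ to show that the variance $R_{\mathrm{eff}}^c(v,z_\Lambda)$ in these Gaussians stays bounded as $\Lambda$ varies. The FKG-based Gaussian correlation inequality highlighted in the abstract is the tool that upgrades $\eps$-monotonicity (which is qualitatively weaker than convexity) into such a mixture domination, replacing the classical log-concave Brascamp--Lieb route which is unavailable here.

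The first ingredient is an edgewise decomposition. Using that $U_e$ is even and $\eps$-monotone, I would write
\[
e^{-U_e(x)} \;=\; \int_{[\eps,\infty)} h_e(c,x)\, e^{-cx^2/2}\, d\rho_e(c)
\]
with $h_e(c,x)\geq 0$ even in $x$ and possessing the monotone dependence on $(c,|x|)$ required by the FKG-Gaussian inequality; the lower endpoint $c\geq \eps$ encodes the $\eps x$ branch of the monotonicity bound, while the $(1+\eps)/x$ branch controls the tail of $h_e$ and ensures the integral is well-defined. Substituting into \eqref{eq:model-def} exhibits $\mu_{G_\Lambda,\vec U}$ as a weighted mixture over conductance configurations $c=(c_e)_{e\in E(G_\Lambda)}$ of centered Gaussians $\gamma_c$ whose precision matrix is the weighted graph Laplacian with conductances $c$ (wired at $z_\Lambda$). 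The FKG-Gaussian correlation inequality then yields
\[
\mu_{G_\Lambda,\vec U}\big(|\phi(v)|>t\big) \;\leq\; \int \gamma_c\big(|\phi(v)|>t\big)\, d\pi(c)
\]
for the induced conductance measure $\pi$.

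It remains to show that $\Var_{\gamma_c}(\phi(v))=R_{\mathrm{eff}}^c(v,z_\Lambda)$ is tight in $\pi$-probability, uniformly in $\Lambda$. The $(1+\eps)/x$ branch of $\eps$-monotonicity forces the marginal $\pi$-probability that $c_e\geq \eps$ to be bounded below by some constant $p_0=p_0(\eps)>0$ independent of $\Lambda$, and positive association (again from the FKG input) stochastically dominates the set of ``good'' edges $\{e:c_e\geq \eps\}$ from below by $p_0$-Bernoulli bond percolation on $G$. If $p_0>p(G)$ in the sense of Definition \ref{def:percolation-graphs}, this immediately supplies, with positive probability, a transient infinite cluster through $v$, whence the Rayleigh monotonicity principle bounds $R_{\mathrm{eff}}^c(v,z_\Lambda)$ uniformly in $\Lambda$; otherwise a standard block-renormalization step amplifies $p_0$ to exceed $p(G)$ before applying the same argument.

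The main obstacle I anticipate is the FKG-Gaussian correlation inequality itself: proving it in a form strong enough to support the mixture domination above, and verifying that the edgewise decomposition supplies the precise monotonicity structure the inequality requires. The usual log-concavity route fails here because even monotone densities need not be log-concave, so the FKG substitute must exploit a genuine positive coupling between the conductances $c_e$ and the gradient magnitudes $|\phi(v)-\phi(v')|$ in the Gibbs measure. It is this step, rather than the percolation reduction, that I expect to be the core technical contribution.
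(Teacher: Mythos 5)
Your high-level plan -- decompose the Gibbs measure into a mixture of wired Gaussian free fields indexed by edge conductances, use an FKG-Gaussian correlation inequality to pass to this mixture, then use percolation transience to control the effective resistance -- is the right outline and matches the paper's strategy. But two steps as you have written them contain genuine gaps.

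First, the passage from ``positive association'' to ``stochastic domination of good edges by Bernoulli percolation'' is not valid. FKG/positive association by itself places no constraint on the joint law of the indicators $\{c_e\geq\eps\}$ beyond positive correlation; for instance the perfectly correlated measure $\tfrac12\bigl(\delta_{(0,\dots,0)}+\delta_{(1,\dots,1)}\bigr)$ is positively associated but dominates no nontrivial product measure. What the paper actually proves (Theorem~\ref{thm:domination-of-products}\ref{it:product-of-rho-domination}) is that the mixing measure $\nu$ on resistances satisfies $\nu\preceq_{\stoc}\rho^{\otimes E(G_\Lambda)}$ for a \emph{fixed, universal} $\rho$, which it deduces by applying FKG with base measure $\rho^{\otimes E}$ and checking that $\det(F(\xi))^{-1/2}\prod_e\xi_e^{-1}$ is coordinate-wise decreasing in $\xi$ (a Gaussian-correlation-type determinant inequality). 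Only then are the resistances reduced to genuinely i.i.d.\ ones, and the ``good edges'' form an honest Bernoulli percolation. You need this reduction to a product law; positive association alone does not supply it. Relatedly, the paper never works with a decomposition $e^{-U_e}=\int h_e(c,x)e^{-cx^2/2}\de\rho_e(c)$ in which $h_e$ depends jointly on $(c,x)$: instead it splits $U=V+W$ with a fixed Gaussian-mixture potential $V=V_\eps$ (constructed explicitly in Proposition~\ref{prop:monotone-function-representation} so that $e^{-V}$ is of the form \eqref{eq:V-GMM}) and a residual $W=U-V$ that is even and increasing on $\bbR_+$, and handles $W$ by a single application of the Gaussian correlation inequality. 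This both removes the need to produce the elusive $h_e$ and makes $\rho$ independent of $U_e$ and of $e$, which is essential for the i.i.d.\ reduction.

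Second, anchoring the percolation threshold to $c_e\geq\eps$ does not work. The $\eps$-monotonicity only forces the potential to grow like $(1+\eps)\log x$ at infinity, so the mixing measure can put substantial mass on very small conductances, and the constant $p_0(\eps)$ you propose could easily be below $p(G)$. Your suggested repair via block renormalization is a $\bbZ^d$-specific technique and is not available on general percolation-transient graphs, which is the class the theorem covers. The paper avoids this entirely: since $\rho$ is a fixed probability measure on $(0,\infty)$, one picks a threshold $C_{\rho,p}$ with $\rho([0,C_{\rho,p}])\geq p$ directly, for the very $p<1$ given by Definition~\ref{def:percolation-graphs}. In other words, the threshold should be a free parameter chosen to match $p(G)$, not fixed at $\eps$.
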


Such tightness directly implies existence of an infinite volume Gibbs measure on $G$, which is also a natural definition of localization. Namely it suffices to take any weak subsequential limit of the $\mu_{G_{\Lambda},\vec U}$ as $\Lambda\uparrow V(G)$.

On $\bbZ^2$, although there is delocalization, we obtain upper bounds of the same order as for the Gaussian free field.

\begin{theorem}
\label{thm:Z2}  
    Fix $\eps>0$ and let $G=\bbZ^2$.
     Then as the $\eps$-monotone $\vec U$ and vertex subset $\Lambda\subseteq V(G)$ vary, the set of $\mu_{G_{\Lambda},\vec U}$-laws of $\frac{\phi(0,0)}{\sqrt{\log \diam(\Lambda)}}$ is tight.
\end{theorem}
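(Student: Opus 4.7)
The approach closely parallels that of Theorem~\ref{thm:main}, adapted to the fact that on $\bbZ^2$ the effective resistance from a vertex to a far boundary grows like $\log L$ rather than staying bounded. The first step is to invoke the FKG-Gaussian correlation inequality to dominate $\mu_{G_{\Lambda},\vec U}$ by a random mixture of centered Gaussian free fields on $G_\Lambda$ carrying random edge conductances $\bc=(c_e)_{e\in E(G_\Lambda)}$ derived from the $\eps$-monotone structure of $\vec U$. After this reduction, $|\phi(0,0)|$ under $\mu_{G_{\Lambda},\vec U}$ is stochastically dominated by $|N(0,\sigma_{\bc}^2)|$, where $\sigma_{\bc}^2=R_{\bc}\big((0,0),z_\Lambda\big)$ is the random effective resistance in the weighted multigraph $G_\Lambda$ between the origin and the contracted boundary vertex.

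The second step is to show $\sigma_{\bc}^2=O(\log\diam(\Lambda))$ with overwhelming probability. Without loss of generality $(0,0)\in\Lambda$, and after translating we may assume $\Lambda\subseteq [-L,L]^2$ with $L\leq\diam(\Lambda)$. Since contracting more vertices only lowers the effective resistance, Rayleigh monotonicity gives $R_{\bc}\big((0,0),z_\Lambda\big)\leq R_{\bc}\big((0,0),\partial[-L,L]^2\big)$. The $\eps$-monotone hypothesis should ensure (as in the proof of Theorem~\ref{thm:main}) that the random edge set $\{e:c_e\geq\eps\}$ dominates a Bernoulli bond percolation configuration on $\bbZ^2$ of any desired density $p<1$. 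Choosing $p$ close to $1$ and invoking standard planar supercritical percolation estimates, the effective resistance from the origin to $\partial[-L,L]^2$ on the resulting open subgraph is $O(\log L)$ with probability $1-o(1)$ as $L\to\infty$, uniformly in $L$.

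Putting these pieces together, apart from an event of vanishing probability the law of $\phi(0,0)/\sqrt{\log\diam(\Lambda)}$ is dominated by a centered Gaussian of $O(1)$ variance, yielding tightness. The principal obstacle is the effective-resistance estimate on the random open subgraph: one must show that for $p$ sufficiently close to $1$, the origin-to-box-boundary effective resistance on a supercritical $\bbZ^2$ percolation configuration is only $O(\log L)$, uniformly in $L$. This reduces, via planar duality and quantitative isoperimetric/connectivity estimates on supercritical clusters, to the classical $O(\log L)$ bound on the full lattice. A secondary but minor point is that $G_\Lambda$ may have parallel edges from the contraction of $V(G)\setminus\Lambda$, but these only decrease resistance and so are harmless.
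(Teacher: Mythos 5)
Your overall strategy is the same as the paper's: dominate $\mu_{G_{\Lambda},\vec U}$ by an i.i.d.\ mixture of wired Gaussian free fields via the FKG--Gaussian correlation inequality, pass to the box $[-L,L]^2$ by Rayleigh monotonicity, observe that the edges of bounded resistance form a supercritical Bernoulli percolation, and invoke an $O(\log L)$ effective-resistance bound on the supercritical cluster. That last estimate, which you correctly flag as the crux, is not something you need to reprove from scratch: it is available in the literature (the paper cites Boivin--Rau, Proposition 4.3), so leaning on it is legitimate.

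There is, however, one genuine imprecision in your second step. You claim that ``the effective resistance from the origin to $\partial[-L,L]^2$ on the resulting open subgraph is $O(\log L)$ with probability $1-o(1)$.'' This presupposes that the origin lies in (or is connected within the open subgraph to) the infinite cluster of the percolation $\{e: c_e \geq \eps\}$. For any fixed $p<1$ that event has probability $\theta(p)<1$, bounded away from $1$ uniformly in $L$, so the complementary event cannot be discarded as ``vanishing'': on it your argument gives no bound at all, and tightness of $\phi(0,0)/\sqrt{\log L}$ requires a uniform-in-$L$ tail bound on the full resistance, not just on a high-probability event. The fix is the one the paper uses: let $w$ be the closest point of the infinite cluster $\cC$ to the origin, bound
\[
R_{G_{\Lambda_L},\xi}(v\lrarrow z_{\Lambda_L})
\;\leq\;
R_{G_{\Lambda_L},\xi}(v\lrarrow w)
\;+\;
C^2\,R_{\cC_L}(w\lrarrow z_{\Lambda_L}),
\]
note that the first term is almost surely finite with a law that is tight uniformly in $L$ (it only involves finitely many edges whose resistances have a fixed law), and apply the $O(\log L)$ cluster estimate only to the second term. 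With this repair your argument goes through and coincides with the paper's proof.
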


Before stating further extensions, let us briefly describe the main proof strategy.
The first step is to write any $\eps$-monotone $U$ as $U=V+W$ where $V,W:\bbR\to\bbR$ are even and increasing on $\bbR_+$. 
Proposition~\ref{prop:monotone-function-representation} shows this can be done so that additionally $e^{-V}$ is a mixture of centered Gaussian densities, i.e.
\begin{equation}
\label{eq:V-GMM}
    e^{-V(x)}
    =
    \int_{0}^{\infty}
    \frac{e^{-x^2/2\kappa^2}}{\kappa\sqrt{2\pi}}
    \de \rho(\kappa)
\end{equation}
for an absolutely continuous probability measure $\rho$ on $(0,\infty)$ (with $\int \frac{\de\rho(\kappa)}{\kappa}<\infty$).
The $\eps$-monotonicity of $\vec U$ enters only to show such a $\rho$ exists (and does not depend on the edge $e$).

From the representation \eqref{eq:V-GMM}, it follows that $\mu_{G_{\Lambda},V}$ decomposes into a mixture of weighted Gaussian free fields, each described by a choice of edge resistances on $E(G_{\Lambda})$.
This observation was previously made and exploited to study Gibbs measures of the form $\mu_{G_{\Lambda},V}$ in \cite{biskup2007phase,biskup2011scaling} and subsequent works (see Subsection~\ref{subsec:related}).
However the joint distribution of these resistances may be complicated, and it is not clear whether this description of $\mu_{G_{\Lambda},V}$ helps to understand $\mu_{G_{\Lambda},\vec U}$.
Using the \emph{FKG-Gaussian correlation inequality} developed in Subsection~\ref{subsec:FKG-GCI}, we reduce localization of $\mu_{G_{\Lambda},\vec U}$ to localization of $\mu_{G_{\Lambda},V}$. Moreover we are able to replace the complicated joint distribution of edge resistances for $\mu_{G_{\Lambda},V}$ by \iid resistances.
The resulting \iid model is simple enough to analyze directly, e.g. the edges with bounded resistance form a Bernoulli bond percolation.
The extensions in the next subsection come from different analyses of Gaussian free fields with \iid resistances, following the same type of initial reduction.

\subsection{Extensions}

Here we state various extensions of Theorem~\ref{thm:main}.  
Under stronger growth conditions on $U$, which are used to obtain tail estimates for $\rho$ in \eqref{eq:V-GMM}, we show power law and stretched exponential upper tail bounds for $\phi(v)$.
These are valid on all transient graphs: thus localization of the Gaussian free field always implies localization for a wide class of monotone potentials, which can diverge as slowly as $U(x)\approx (3+\eps)\log(x)$. 
In many cases, these bounds allow us to determine the typical order of $\max_{v\in\Lambda}|\phi(v)|$ as $|\Lambda|\uparrow\infty$.
Finally we give extensions to non-wired boundary conditions and iterated Laplacian random surfaces such as the membrane model.

\paragraph{Tail Bounds for Field Values on Transient Graphs.}

Theorem~\ref{thm:main} does not give concrete tail bounds for $\phi(v)$.
In fact if $U$ diverges slowly, $\phi(v)$ can have a very heavy tail.
For example suppose that $U(x)=(\alpha+1)\log x \pm O(1)$ for large $x$ and $\alpha>0$. With $N(v)\subseteq V(G)$ the neighborhood of $v$, conditioning on the restriction $\phi|_{N(v)}$ easily shows
\begin{equation}
\label{eq:fluctuation-LB} 
\bbP\Big[\max\limits_{w\in v\cup N(v)}|\phi(w)|\geq t\Big]
\geq 
(t+1)^{-|N(v)|\alpha}/C
\end{equation}
for any Gibbs measure, with $C=C(U,\alpha,|N(v)|)$. (See the proof of Corollary~\ref{cor:maximum-value-Zd} for details.)

Next we show an upper bound for the tails of $\phi(v)$ under such a growth condition. 
We use $R_G(v\lrarrow w)$ to denote the effective resistance in the unweighted graph $G$ (see Subsection~\ref{subsec:setup} for reminders, or \cite{lyons2017probability}).

\begin{definition}
\label{def:power-tail}
A random variable $\xi$ has \textbf{sub-$\alpha$ tail} if $\bbP[|\xi|\geq t]\leq Ct^{-\alpha}$ for some $C<\infty$ and all $t$.
A family of random variables has uniformly sub-$\alpha$ tails if the constant $C$ is uniform.
\end{definition}

\begin{definition}
\label{def:k-eps-monotone}
The increasing function $U:[0,\infty)\to [0,\infty)$ is $(\alpha,\eps)$-monotone if $U'(x)\geq \min\lt(\eps x,\frac{\alpha+1}{x}\rt)$ at all points of differentiability $x>0$.
$\vec U=(U_e)_{e\in E(G)}$ is $(\alpha,\eps)$-monotone if each $U_e$ is.
\end{definition}

\begin{theorem}
\label{thm:moment-bounds}
    Suppose $\vec U$ is $(\alpha,\eps)$-monotone for $\alpha>2$ and $\eps>0$.
    Then for $v\in \Lambda\subseteq V(G)$, the $\mu_{G_{\Lambda},\vec U}$-law of $\phi(v)$ has sub-$\alpha$ tail with constant $C\big(\alpha,\eps,R_{G_{\Lambda}}(v\lrarrow z_{\Lambda})\big)$.
    In particular if $G$ is transient, then $\phi(v)$ has uniformly sub-$\alpha$ tail as $\Lambda\subseteq V(G)$ varies.
\end{theorem}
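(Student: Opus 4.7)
The plan is to follow the three-step strategy described immediately after Theorem~\ref{thm:Z2}---decompose $U_e=V_e+W_e$ with $e^{-V_e}$ a centered Gaussian mixture, apply the FKG--Gaussian correlation inequality of Subsection~\ref{subsec:FKG-GCI} to reduce to an \iid weighted Gaussian free field, and read off tail bounds from the Gaussian structure---but with a refined input controlling the tail of the mixing measure $\rho$ in \eqref{eq:V-GMM}. Specifically, inspecting the construction behind Proposition~\ref{prop:monotone-function-representation}, the hypothesis $U_e'(x)\geq (\alpha+1)/x$ for large $x$ should translate into the power-law bound $\rho\big([K,\infty)\big)\leq C_{\alpha,\eps}K^{-\alpha}$ for $K$ large. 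Verifying this quantitative tail estimate on $\rho$ is the first step and is essentially a calculus exercise once the construction of $V_e$ is fixed.

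With this tail estimate in hand, the FKG--GCI lets me dominate the $\mu_{G_{\Lambda},\vec U}$-law of $|\phi(v)|$ by its law under the \iid mixture model $\tilde\mu$ in which independent resistances $r_e=\kappa_e^2$ with $\kappa_e\sim\rho$ are drawn over $e\in E(G_\Lambda)$, and then conditionally a Gaussian free field on $G_\Lambda$ with these resistances is sampled. Conditionally on $\vec r$, $\phi(v)$ is centered Gaussian with variance $\sigma^2(\vec r)\defeq R^{\vec r}_{G_\Lambda}(v\lrarrow z_\Lambda)$, so it suffices to prove $\bbP\big(\sigma^2(\vec r)\geq s\big)\lesssim_{\alpha,R_0}s^{-\alpha/2}$, where $R_0\defeq R_{G_\Lambda}(v\lrarrow z_\Lambda)$.

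To obtain this, let $\theta^*$ be the unit current flow from $v$ to $z_\Lambda$ in $G_\Lambda$ with all-unit resistances, so $\sum_e(\theta^*_e)^2=R_0$ and $|\theta^*_e|\leq 1$ (by the maximum principle applied to the harmonic potential). Thomson's principle gives $\sigma^2(\vec r)\leq \sum_e r_e(\theta^*_e)^2$. Writing $Y_e\defeq r_e(\theta^*_e)^2$, the sub-$\alpha/2$ tail of $r_e$ gives $\bbP(Y_e>s)\lesssim (\theta^*_e)^{\alpha}s^{-\alpha/2}$, so splitting on whether $\max_e Y_e>s/2$ bounds the maximum-contribution by
\[
\sum_e \bbP(Y_e>s/2)\lesssim s^{-\alpha/2}\sum_e(\theta^*_e)^{\alpha}\leq R_0\, s^{-\alpha/2},
\]
where I used $|\theta^*_e|\leq 1$ and $\alpha\geq 2$ to get $(\theta^*_e)^{\alpha}\leq (\theta^*_e)^2$. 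The bulk $\sum_e Y_e\one_{Y_e\leq s/2}$ is controlled by Chebyshev's inequality applied to the truncated second moment $\bbE[r^2\one_{r\leq M}]$, with a minor case split on $\alpha\in(2,4)$ versus $\alpha\geq 4$ to handle the resulting integral; in each case one recovers $\lesssim_{\alpha,R_0}s^{-\alpha/2}$.

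Finally, for any fixed large constant $K=K(\alpha)$, conditioning on $\vec r$ gives
\[
\bbP(|\phi(v)|\geq t)\leq \bbP\big(\sigma(\vec r)\geq t/K\big)+2e^{-K^2/2}\lesssim_{\alpha,R_0}K^{\alpha}t^{-\alpha}+e^{-K^2/2},
\]
which yields the sub-$\alpha$ tail with constant depending only on $(\alpha,\eps,R_0)$. The transient-graph conclusion follows since $R_0\leq R_G(v\lrarrow \infty)<\infty$ uniformly in $\Lambda$ by Rayleigh monotonicity. The main obstacle is not the heavy-tail computation of step~3 but the two reductions in steps~1--2: establishing the sharp power-law tail of $\rho$ from $(\alpha,\eps)$-monotonicity with the correct exponent $\alpha$, and executing the FKG--GCI reduction from $\mu_{G_\Lambda,\vec U}$ to the \iid mixture $\tilde\mu$ in a way that does not degrade this exponent.
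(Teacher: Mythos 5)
Your steps 1--2 match the paper exactly: the tail estimate $\rho([K,\infty))\lesssim K^{-\alpha}$ follows from the construction in Proposition~\ref{prop:monotone-function-representation} (density $\propto\kappa^{-\alpha-1}\one_{\kappa\geq A}$), and the domination by the i.i.d. Gaussian mixture is the chain $\mu_{G_{\Lambda},\vec U}\preceq_{\con}\mu_{G_{\Lambda},V}\preceq_{\con}\int\gamma_{\Lambda,\xi}\,\de\rho^{\otimes E(G_{\Lambda})}(\xi)$ from Theorem~\ref{thm:domination-of-products}/Lemma~\ref{lem:stiffening}. Your observation that the unit electric current flow with unit resistances satisfies $|\theta^*_e|\leq 1$ is also correct (every edge is crossed by some level-set cut, across which the total current is $1$ and each crossing current is positive).

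The genuine gap is in step 3, the tail bound for $\sigma^2(\vec r)\leq\sum_e r_e(\theta^*_e)^2$. Your max/bulk decomposition handles the maximum term correctly, obtaining $\lesssim R_0\,s^{-\alpha/2}$. But the bulk estimate via Chebyshev does not yield $s^{-\alpha/2}$ when $\alpha>4$. Concretely, for $\alpha\geq 4$ one has $\bbE[r_e^2]<\infty$, so the truncation is immaterial and $\Var\big(\sum_e r_e(\theta^*_e)^2\one_{\cdot}\big)\lesssim\sum_e(\theta^*_e)^4\leq R_0$; Chebyshev then gives only $\lesssim R_0\,s^{-2}$, which is strictly weaker than $s^{-\alpha/2}$ once $\alpha>4$. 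The "minor case split on $\alpha\in(2,4)$ versus $\alpha\geq 4$" does work in the first regime (there the truncated second moment genuinely scales like $s^{2-\alpha/2}$), but in the second regime a second-moment bound is just not sharp enough, and rescuing the argument would require higher-moment or iterated-peeling machinery you have not supplied.

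The paper avoids this entirely with a one-line convexity argument that you should compare against. After the Thomson/Cauchy--Schwarz bound $Q_\xi(v)\leq\sum_e\xi_e^2(\theta^*_e)^2$ with $\sum_e(\theta^*_e)^2=Q_1(v)$, Theorem~\ref{thm:convex-comparison} applies Jensen to the convex hinge $\Phi_t(x)=(x-t)_+$ with weights $p_e=(\theta^*_e)^2/Q_1(v)$, giving
\[
\bbE\big[\Phi_t(Q_\xi(v))\big]\leq\sum_e p_e\,\bbE\big[\Phi_t(\xi_e^2 Q_1(v))\big]=\bbE\big[\Phi_t(\xi_e^2 Q_1(v))\big]\lesssim Q_1(v)^{1-\alpha/2}t^{-\alpha/2},
\]
and Proposition~\ref{prop:power-tail-convex} converts this hinge-moment bound back into a sub-$\alpha/2$ tail for $Q_\xi(v)$, then into a sub-$\alpha$ tail for $\phi(v)$. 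This works uniformly for all $\alpha>2$, with no case split. So the key missing ingredient in your write-up is precisely the reformulation of "sub-$\alpha/2$ tail" as boundedness of $\bbE[(x-t)_+]$ (the equivalence in Proposition~\ref{prop:power-tail-convex}), which makes convexity do the work that Chebyshev cannot.
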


While \eqref{eq:fluctuation-LB} and Theorem~\ref{thm:moment-bounds} are both power-law bounds, their exponents differ unless $v$ has degree $|N(v)|=1$ (but see Theorem~\ref{thm:moment-bounds-Zd} below).
If $U$ grows at a sub-quadratic polynomial rate, we obtain stretched exponential decay.

\begin{definition}
\label{def:stretched-tail}
A non-negative random variable $\xi$ has \textbf{stretched sub-$\beta$ tail} if $\bbP[\xi\geq t]\leq Ce^{-t^{\beta}/C}$ for some $C<\infty$ and all $t$.
A family of random variables has uniformly stretched sub-$\beta$ tails if this holds with a uniform constant $C$.
\end{definition}

\begin{definition}
    The increasing $U:[0,\infty)\to [0,\infty)$ is $(\beta,\eps)$-polynomially monotone if $U'(x)\geq \eps\,\min(x,x^{\beta-1})$ at all points of differentiability $x>0$.
    $\vec U=(U_e)_{e\in E(G)}$ is $(\beta,\eps)$-polynomially monotone if each $U_e$ is.
\end{definition}

\begin{theorem}
\label{thm:stretched-exponential-bounds}
    Fix $\beta\in (0,2]$ and $\eps>0$, and $(\beta,\eps)$-polynomially monotone $\vec U$ on the transient graph $G$.
    Then for all $v\in V(G)$, the $\mu_{G_{\Lambda},\vec U}$-law of $\phi(v)$ has stretched sub-$\beta$ tail with constant $C=C(\beta,\eps,R_G(v\lrarrow \infty))$.
\end{theorem}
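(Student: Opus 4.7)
Following the same reduction used for Theorem~\ref{thm:main}, I first write each $U_e = V_e + W_e$ via Proposition~\ref{prop:monotone-function-representation} and invoke the FKG--Gaussian correlation inequality to stochastically dominate $|\phi(v)|$ under $\mu_{G_\Lambda,\vec U}$ by $|\phi(v)|$ under the mixture law $\mu_{G_\Lambda,V}$. The latter is realized as a Gaussian free field on $G_\Lambda$ whose edge resistances $(\kappa_e^2)_{e \in E(G)}$ are sampled i.i.d.\ with $\kappa_e \sim \rho$. Conditionally on $\vec\kappa$, $\phi(v)$ is centered Gaussian with variance equal to the effective resistance $R(\vec\kappa)$ from $v$ to $z_\Lambda$ in the weighted graph. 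Applying Thomson's principle to the minimum-energy unit flow $f$ from $v$ to infinity in the unweighted $G$ (which restricts to a unit flow from $v$ to $z_\Lambda$ in $G_\Lambda$), I obtain
\[
R(\vec\kappa) \leq Y := \sum_{e \in E(G)} f_e^2 \kappa_e^2, \qquad \sum_e f_e^2 = R_G(v \lrarrow \infty) =: R_0 < \infty,
\]
and hence $\bbP[|\phi(v)| \geq t] \leq 2\,\E\bigl[e^{-t^2/(2Y)}\bigr]$. This reduces the theorem to a stretched exponential concentration estimate for $Y$.

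The new technical input is a stretched tail bound on the Gaussian mixture measure $\rho$. Under $(\beta,\eps)$-polynomial monotonicity, $V(x) \geq c_{\beta,\eps}\,|x|^\beta$ for $|x|$ large, while substituting $S = 1/\kappa^2$ and $t = x^2/2$ rewrites the mixture identity \eqref{eq:V-GMM} as the Laplace transform relation
\[
e^{-V(\sqrt{2t})} = \E\!\left[\sqrt{S/(2\pi)}\, e^{-St}\right], \qquad \kappa \sim \rho.
\]
A saddle-point (Kasahara--Tauberian) inversion converts the polynomial decay $V(\sqrt{2t}) \gtrsim t^{\beta/2}$ into the small-$s$ tail $\bbP[S \leq s] \leq C\exp\bigl(-c\, s^{-\beta/(2-\beta)}\bigr)$ when $\beta \in (0,2)$; equivalently, $\rho\bigl([K,\infty)\bigr) \leq C\exp\bigl(-K^{2\beta/(2-\beta)}/C\bigr)$ for all $K > 0$. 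When $\beta = 2$, $\rho$ is in fact compactly supported in $[0, 1/\sqrt{\eps}]$. Thus $\kappa^2$ has stretched sub-$\gamma$ tail with $\gamma := \beta/(2-\beta) \in (0,\infty]$.

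To conclude, I split at the threshold $Y_\star = c R_0\, t^{2-\beta}$ with $c = c(\beta)$ chosen small:
\[
\bbP[|\phi(v)| \geq t] \;\leq\; 2 e^{-t^2/(2Y_\star)} + 2\,\bbP[Y \geq Y_\star] \;\leq\; 2 e^{-t^\beta/C} + 2\,\bbP[Y \geq Y_\star].
\]
For the remaining term, the stretched sub-$\gamma$ tails of the $\kappa_e^2$ together with the weight constraint $\sum_e f_e^2 = R_0$ and independence yield $\bbP[Y \geq r] \leq C\exp\bigl(-(r/R_0)^\gamma/C\bigr)$ by a standard large-deviation argument for weighted sums of stretched-exponential variables (truncate each $\kappa_e^2$ at level $\sim r$, bounding separately the light-tailed central part and the contribution of atypically large summands). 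Plugging in $r = Y_\star$ gives $\bbP[Y \geq Y_\star] \leq C\exp(-t^\beta/C)$, completing the proof. The principal obstacle is the $\rho$-tail bound in the second paragraph: quantitatively matching the polynomial growth of $V$ to the stretched exponential decay of $\rho$ via Laplace transform inversion, with exactly the exponent $2\beta/(2-\beta)$. Once this ingredient is in hand, the Gaussian conditioning and weighted-sum concentration steps are routine.
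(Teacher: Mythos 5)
Your proposal follows essentially the same route as the paper (decompose $U=V+W$, invoke the FKG--Gaussian correlation inequality to dominate by a mixture of GFFs with i.i.d.\ resistances, then bound the resulting resistance-to-infinity), but it departs at two technical steps and has one citation slip.

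First, the reference to Proposition~\ref{prop:monotone-function-representation} is incorrect: that construction is for $(\alpha,\eps)$-monotone $U$. The relevant $V$ for the $(\beta,\eps)$-polynomially monotone case is $\wh V_{\beta,K}(x)=(1+(x/K)^2)^{\beta/2}$, supplied by Propositions~\ref{prop:V-beta-K-GMM} and~\ref{prop:power-growth-GMM}. Related to this, your phrase ``under $(\beta,\eps)$-polynomial monotonicity, $V(x)\gtrsim|x|^\beta$'' puts the emphasis in the wrong place: the hypothesis on $U$ is a lower bound on $U'$, and the real content is \emph{constructing} a Gaussian-mixture $V$ with $V'\leq\eps\min(x,x^{\beta-1})$ that \emph{also} grows like $|x|^\beta$. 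You need both simultaneously, and that this is possible via a genuine Gaussian mixture is a nontrivial fact. Your proposed Kasahara--Tauberian inversion of the Laplace-transform identity to get the stretched tail of $\rho$ is a reasonable alternative to the paper's citation of \cite{ye2019models}, but on its own it does not produce the mixture representation, and one must also watch the extra density factor $\sqrt{S}$ in your rewrite of \eqref{eq:V-GMM} (a Tauberian statement about $\E[e^{-St}]$ is not literally a statement about $\E[\sqrt{S}e^{-St}]$).

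Second, for the concentration step the paper's route is cleaner than the truncation/large-deviation sketch you give. Theorem~\ref{thm:convex-comparison} is exactly the Jensen observation that since $Y/R_0=\sum_e(f_e^2/R_0)\kappa_e^2$ is a convex combination, one has $\E\Phi(Y)\leq\sup_e\E\Phi(R_0\kappa_e^2)$ for any convex increasing $\Phi$, and then one chooses $\Phi(x)=\max(K,e^{c_*|x|^{\tilde\beta}})$ (convex after capping) and applies Markov. This gives the bound $\bbP[Y\geq r]\leq Ce^{-(r/R_0)^{\tilde\beta}/C}$ with no case analysis and, crucially, with a constant that is \emph{uniform in the weight profile} $(f_e^2)$ --- which changes with $\Lambda$. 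Your truncation argument plausibly yields the same bound, but establishing it uniformly over arbitrary profiles (especially in the heavy-tailed regime $\tilde\beta<1$) requires some care that your sketch does not supply. In short: the plan is right, the exponent bookkeeping ($\gamma=\beta/(2-\beta)$, $Y_\star=cR_0t^{2-\beta}$, recovering $e^{-t^\beta/C}$) all checks out, but the two ``standard'' ingredients you invoke are precisely the places where the paper does something more specific (explicit $\wh V_{\beta,K}$ with cited asymptotics, and Jensen via Theorem~\ref{thm:convex-comparison}) to avoid gaps.
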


The $\beta=2$ case is essentially immediate from the Gaussian correlation inequality (as explained in the proof), and already encompasses uniformly convex $U$.
Furthermore the $\beta=1$ case includes all convex functions $U$ with $U(x)/x^2$ bounded below near zero.
For such $U$, the associated fields $\phi(v)$ are therefore sub-exponential (which also follows from weaker localization statements thanks to log-concavity). 
Interestingly in \cite{magazinov2022concentration}, the tails of $\phi(v)$ are shown to decay strictly faster than Gaussian when $U(x)=x^2+|x|^p$ with $p>2$. Since our techniques are based on Gaussian domination, they cannot recover such behavior without additional work.

\paragraph{Order of the Maximum Value on $\Lambda$.}

Theorem~\ref{thm:stretched-exponential-bounds} is tight, since in the stretched exponential analog of \eqref{eq:fluctuation-LB}, $N(v)$ enters more mildly.
This allows us to determine the order of $\max_{v\in\Lambda}|\phi(v)|$ for $U$ of suitably regular growth.

\begin{corollary}
\label{cor:maximum-value}
    Let $G$ be transient and transitive.
    Suppose $\vec U$ is $(\beta,\eps)$-polynomially monotone for $\beta\in (0,2]$ and $\eps>0$, with $0\leq U_e(x)\leq C(x^{\beta}+1)$ for all $(x,e)\in\bbR_+\times E(G)$.
    Then the $\mu_{G_{\Lambda_i},\vec U}$-laws of
    \[
    (\log |\Lambda_i|)^{-1/\beta}\max_{v\in \Lambda_i}|\phi(v)|
    \]
    are tight for any sequence $\Lambda_i\subseteq G$ with $|\Lambda_i|\uparrow\infty$, and all subsequential limits have compact support in $(0,\infty)$.
\end{corollary}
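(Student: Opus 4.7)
The corollary combines matching upper and lower bounds on $\max_{v \in \Lambda_i} |\phi(v)|$ at scale $(\log |\Lambda_i|)^{1/\beta}$.

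For the upper bound, Theorem~\ref{thm:stretched-exponential-bounds} yields a stretched sub-$\beta$ tail for each $\phi(v)$, and transitivity makes the constant $C$ uniform over $v$. A union bound over $|\Lambda_i|$ vertices then gives
\[
\mu_{G_{\Lambda_i}, \vec U}\bigl[\max_{v \in \Lambda_i}|\phi(v)| \geq M (\log|\Lambda_i|)^{1/\beta}\bigr]
\leq C|\Lambda_i|\exp\bigl(-M^\beta \log|\Lambda_i|/C\bigr),
\]
which vanishes for $M$ large.

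The lower bound is the substantive part and rests on a conditional-independence argument. Since $G$ is transitive, it has a uniform finite degree $D$. Greedily select a subset $S = \{v_1, \ldots, v_N\} \subseteq \Lambda_i$ of size $N \geq |\Lambda_i|/(D^2+1)$ whose elements are at pairwise graph distance $\geq 3$, so that the closed neighborhoods $N[v_i] = \{v_i\} \cup N(v_i)$ are pairwise disjoint. Now condition on $\phi_\partial := \phi|_{V(G_{\Lambda_i}) \setminus S}$. Because no edge of $G_{\Lambda_i}$ has both endpoints in $S$, the conditional density of $\phi|_S$ factors as $\prod_i f_i(\phi(v_i))$, making $\phi(v_1), \ldots, \phi(v_N)$ conditionally \emph{independent}.

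The single-vertex estimate I need is: if $\max_{w \in N(v_i)} |\phi(w)| \leq M_0$, then for $t \geq 1$,
\[
\mu_{G_{\Lambda_i}, \vec U}\bigl[\phi(v_i) \in [t, t+1] \bigm| \phi_\partial\bigr] \geq c\,e^{-C t^\beta}.
\]
This uses $U_e(x) \leq C(x^\beta+1)$ to lower-bound the numerator (integrated density of $\phi(v_i)$ over $[t,t+1]$), and the lower bound $U_e(x) \geq c\,x^\beta - O(1)$ implied by $(\beta,\eps)$-polynomial monotonicity to upper-bound the one-dimensional partition function of $\phi(v_i)$ conditional on $\phi_\partial$. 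Fixing $\epsilon > 0$ and using Theorem~\ref{thm:stretched-exponential-bounds} to choose $M_0$ with $\mu[|\phi(w)| > M_0] \leq \epsilon/D$ uniformly in $w$, Markov's inequality yields that the event $\mathcal{G}$ of having at least $N/2$ ``good'' indices (those with $\max_{w \in N(v_i)} |\phi(w)| \leq M_0$) satisfies $\mu[\mathcal{G}] \geq 1 - 2D\epsilon$. On $\mathcal{G}$, conditional independence gives
\[
\mu\bigl[\max_i|\phi(v_i)| < t \bigm| \phi_\partial\bigr] \leq \exp\bigl(-c(N/2)\, e^{-C t^\beta}\bigr),
\]
which vanishes for $t = m(\log N)^{1/\beta}$ with $m^\beta C < 1$. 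Sending $\epsilon \to 0$ and using $N = \Theta(|\Lambda_i|)$ yields the matching lower bound, and combined with the upper bound this proves the asserted tightness and compactness of support in $(0,\infty)$.

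The main obstacle is the single-vertex lower bound, which requires simultaneously exploiting both the upper and the lower polynomial growth controls on $U$, uniformly in admissible boundary values. The rest is structural: transitivity provides the uniform tail constants and the choice of $S$, while the disjoint-neighborhoods property converts the quenched problem on $S$ into a genuinely \iid one, amenable to a product-form lower bound.
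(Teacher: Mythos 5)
Your proof is correct and follows essentially the same strategy as the paper: a union bound over Theorem~\ref{thm:stretched-exponential-bounds} for the upper tail, and for the lower tail a sparse subset on which the field values are conditionally independent given the rest, combined with a one-vertex push-out estimate driven by the growth bound $U_e(x)\leq C(x^\beta+1)$. There is one small technical difference worth noting. In the single-vertex estimate, you upper-bound the conditional partition function of $\phi(v_i)$ by integrating the lower bound $U_e(x)\geq c\,x^\beta - O(1)$, which you correctly extract by integrating the $(\beta,\eps)$-polynomial monotonicity condition $U'(t)\geq \eps\min(t,t^{\beta-1})$. The paper instead avoids using any lower bound on $U_e$: it compares the conditional mass of $\phi(v)\in[M,3M]$ against the conditional mass of $\phi(v)\in[-M,M]$, upper-bounding the latter simply by $\int_{-M}^M 1\,\de\phi(v)=2M$ via $e^{-U_e}\leq 1$. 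This ratio trick is a little slicker (it needs only $U_e\geq 0$) but both routes produce a bound of the same form $\exp(-c\,D\,M^\beta)$, so the end computation is identical.

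Two minor points of comparison. First, you take $S$ at pairwise distance $\geq 3$ to make closed neighborhoods disjoint; the paper uses only a distance-$\geq 2$ independent set of size $\geq|\Lambda|/2D$, which already yields the needed conditional independence (no edge has both endpoints in the set) and gives a slightly larger $N$. Your stronger separation is not wrong, just unnecessary. Second, your bookkeeping with the ``good indices'' event $\mathcal G$ and the final $\epsilon\to 0$ limit is sound, but it only works because the exponent $C'D$ multiplying $t^\beta$ in your single-vertex bound does \emph{not} depend on $M_0$ (only the multiplicative prefactor does), so $m$ can be fixed before $\epsilon$; it would be worth stating this explicitly. The paper avoids this extra layer altogether by casework: either $\max_{w\in\cI^c}|\phi(w)|>M$, in which case the global max already exceeds $M$, or it is $\leq M$ and then every $v\in\cI$ is ``good,'' giving the clean bound $(1-q)^{|\cI|}$ with no Markov step. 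Overall the approaches coincide structurally; the paper's is a touch cleaner in both of these places.
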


For power law tails, we can similarly identify the maximum value under an extra condition we call strong transience. 
Say that $G$ is $k$-transient at $v$ if $G$ contains $k$ connected, edge-disjoint transient subgraphs, each containing $v$. 
If $G$ is transitive of degree $D$ and $D$-transient (at arbitrary $v$), we say $G$ is \textbf{strongly transient}.

\begin{theorem}
\label{thm:moment-bounds-Zd}
    Let $G$ be $k$-transient at $v$.
    Suppose $\vec U$ is $(\alpha,\eps)$-monotone for $\alpha>2$ and $\eps>0$.
    Then for $v\in \Lambda\subseteq \bbZ^d$ with $d\geq 3$, the $\mu_{G_{\Lambda},\vec U}$-law of $\phi(v)$ has sub-$k\alpha$ tail.
    Moreover $\bbZ^d$ and $d$-regular trees are strongly transient for $d\geq 3$.
\end{theorem}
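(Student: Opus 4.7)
The plan is to leverage the $k$ edge-disjoint transient subgraphs to sharpen the sub-$\alpha$ tail bound of Theorem~\ref{thm:moment-bounds} by a factor of $k$, via the parallel law for electrical networks. First, I would apply the same FKG--Gaussian correlation inequality reduction used for Theorem~\ref{thm:moment-bounds}: split $\vec U=\vec V+\vec W$ via Proposition~\ref{prop:monotone-function-representation} so each $e^{-V_e}$ has the Gaussian-mixture form \eqref{eq:V-GMM}, and dominate $\mu_{G_\Lambda,\vec U}$ by the mixture of weighted Gaussian free fields with \iid edge resistances $r_e=\kappa_e^2$ drawn from $\rho$. Conditional on $(\kappa_e)$, $\phi(v)\sim\mc{N}(0,R)$ with $R\defeq R_{G_\Lambda}(v\lrarrow z_\Lambda;\,r_e=\kappa_e^2)$. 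Writing $\phi(v)=\sqrt R\,Z$ with $Z\sim\mc{N}(0,1)$ independent, a standard Gaussian integration shows that a sub-$k\alpha/2$ tail for $R$ yields a sub-$k\alpha$ tail for $|\phi(v)|$; it therefore suffices to control the tail of $R$.

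Let $H_1,\dots,H_k\subseteq G$ be the edge-disjoint connected transient subgraphs containing $v$ and set $H_i^\Lambda\defeq H_i\cap G_\Lambda$, so each $H_i^\Lambda$ is a subgraph of $G_\Lambda$ containing $v$ and $z_\Lambda$ and the family $\{H_i^\Lambda\}$ remains edge-disjoint. Short-circuiting the shared internal vertices of the $H_i^\Lambda$ (which only decreases resistance) and then applying the classical parallel law between the terminals $v$ and $z_\Lambda$ gives
\[
\frac{1}{R}\;\geq\;\sum_{i=1}^k \frac{1}{R_{H_i^\Lambda}(v\lrarrow z_\Lambda;\,r_e=\kappa_e^2)},
\]
so $\{R\geq t\}\subseteq\bigcap_{i}\{R_{H_i^\Lambda}\geq t\}$. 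Because the $H_i$ are edge-disjoint and the $(\kappa_e)$ are \iid, the $R_{H_i^\Lambda}$ depend on disjoint sets of $\kappa_e$'s and are therefore mutually independent. The proof of Theorem~\ref{thm:moment-bounds} applied to each transient $H_i$ yields a sub-$\alpha/2$ tail bound $\bbP[R_{H_i^\Lambda}\geq t]\leq C_i\,t^{-\alpha/2}$, with $C_i$ depending only on $\alpha,\eps,R_{H_i}(v\lrarrow\infty)$. Multiplying the $k$ independent bounds yields $\bbP[R\geq t]\leq \big(\prod_i C_i\big)\,t^{-k\alpha/2}$, as required.

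For the ``moreover'' claim, the $d$-regular tree case is immediate: deleting $v$ yields $d$ edge-disjoint rooted subtrees at the neighbors of $v$, each containing an infinite $(d-1)$-ary tree, which is transient for $d\geq 3$. For $\bbZ^d$ with $d\geq 3$, I would construct $2d$ edge-disjoint transient subgraphs through $0$ by assigning each edge $e=\{u,u+e_j\}$ to the coordinate direction $\pm e_i$ best aligned with its midpoint $m=u+e_j/2$ (taking $i=\argmax_\ell|m_\ell|$ with sign $\mathrm{sign}(m_i)$, with a deterministic tie-breaking rule). Each resulting directional cone contains the unit edge from $0$ in its direction together with a full-dimensional angular sector of $\bbZ^d$, which is transient for $d\geq 3$ by standard Green's function or Nash--Williams estimates. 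The main obstacle is executing this partition so that the $2d$ cones are simultaneously edge-disjoint, connected through $0$, and individually transient; this requires careful tie-breaking but no new ideas beyond classical lattice random-walk analysis.
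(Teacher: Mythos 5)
Your main argument is correct and is essentially the paper's: after the FKG--Gaussian correlation reduction to a mixture of free fields with i.i.d.\ resistances, one bounds the conditional variance $R$ by the minimum of the $k$ resistances computed in the edge-disjoint transient subgraphs (the paper does this directly via Rayleigh monotonicity, see \eqref{eq:Q-min-Zd}; your detour through short-circuiting plus the parallel law yields the same, slightly stronger, inequality), uses edge-disjointness to get independence of these $k$ variables, multiplies the $k$ sub-$\alpha/2$ tails, and converts back via $\phi(v)=\sqrt{R}\,Z$. All of these steps are sound.

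The one place you fall short of a complete proof is the ``moreover'' claim for $\bbZ^d$, and you say so yourself. Your directional-cone partition (assigning each edge to the coordinate direction best aligned with its midpoint) is plausible, but the three things you defer --- consistent tie-breaking so the $2d$ classes are genuinely a partition, connectivity of each class through the origin, and transience of a solid $\ell^\infty$-cone --- are exactly the content of the claim, and the last of these does not reduce to orthant transience by containment (the cone $\{x: x_i=\|x\|_\infty\}$ contains no translate of a full orthant), so it needs its own flow or Green's function estimate. The paper avoids all of this with a more rigid construction: $2d$ explicit length-$d$ paths from $\vec 0$ to $2d$ distinct points of $\{\pm 1\}^d$, vertex-disjoint except at $\vec 0$, each continued into the shifted orthant rooted at its endpoint; the orthants are automatically pairwise disjoint and their transience for $d\geq 3$ is standard (coordinate-wise absolute value of simple random walk). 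If you want to keep your cone construction you would need to actually carry out the tie-breaking and supply a transience argument for the cones; otherwise, substituting the paths-plus-orthants construction closes the gap with no change to the rest of your proof.
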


\begin{corollary}
\label{cor:maximum-value-Zd}
    Let $G$ be a degree $D$ strongly transient graph.
    Suppose $\vec U$ is $(\alpha,\eps)$-monotone for $\alpha>2$ and $\eps>0$, with $0\leq U_e(x)\leq \alpha\log(x+1)+C$ for all $(x,e)\in\bbR_+\times E(G)$.
    Then the $\mu_{G_{\Lambda_i},\vec U}$-laws of
    \[
    |\Lambda_i|^{-\frac{1}{D\alpha}}\max_{v\in \Lambda_i}|\phi(v)|
    \]
    are tight in $(0,\infty)$ for any sequence of subsets $\Lambda_i\subseteq V(G)$ with $|\Lambda_i|\uparrow\infty$.
\end{corollary}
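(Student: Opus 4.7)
The corollary follows from matching upper and lower bounds at scale $|\Lambda_i|^{1/(D\alpha)}$. The upper side is immediate from Theorem~\ref{thm:moment-bounds-Zd}: strong transience (with $k=D$) gives each $\phi(v)$ a uniformly sub-$D\alpha$ tail, and a union bound over $v\in\Lambda_i$ yields
\[
\bbP\Big[\max_{v\in\Lambda_i}|\phi(v)|\geq b\,|\Lambda_i|^{1/(D\alpha)}\Big]\leq Cb^{-D\alpha}
\]
uniformly in $i$.

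For the lower side, I would first establish \eqref{eq:fluctuation-LB} in a boundary-uniform conditional form: for every $v\in\Lambda_i$,
\[
\bbP\Bigl[\max_{w\in v\cup N(v)}|\phi(w)|\geq t\;\Big|\;\phi|_{V\setminus(v\cup N(v))}\Bigr]\geq c(t+1)^{-D\alpha},
\]
with $c>0$ independent of the external configuration. Conditioning further on $\phi|_{N(v)}$ reduces the question to the one-dimensional density $p(x)\propto\prod_{w\sim v}\exp(-U_e(x-\phi(w)))$. The hypothesis $U_e(x)\leq\alpha\log(x+1)+C$ gives the pointwise lower bound $\prod_we^{-U_e(x-\phi(w))}\geq c\prod_w(|x-\phi(w)|+1)^{-\alpha}$, while the positivity $U_e\geq 0$ gives the absolute upper bound $Z\leq\int e^{-U_e(y)}\,\de y<\infty$ on the normalizing constant, via the trivial inequality $\prod_we^{-U_e}\leq e^{-U_e(x-\phi(w_0))}$ for any reference neighbor $w_0$. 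Integrating the density, and splitting on whether some $|\phi(w)|$ already exceeds $t$, yields the claim.

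To upgrade this single-vertex estimate to the global maximum, greedily select $S=\{v_1,\ldots,v_m\}\subseteq\Lambda_i$ at pairwise graph distance at least $4$; by transitivity one obtains $m\geq c_D|\Lambda_i|$. The distance condition forces the closed neighborhoods $v_j\cup N(v_j)$ to be pairwise disjoint with no inter-block edges, so conditioning on $\phi$ outside $\bigcup_j(v_j\cup N(v_j))$ makes the block restrictions conditionally independent. Applying the boundary-uniform estimate inside each block produces $m$ conditionally independent events of probability at least $c(t+1)^{-D\alpha}$, so
\[
\bbP\Bigl[\max_{v\in\Lambda_i}|\phi(v)|\geq t\Bigr]\geq 1-\exp\!\bigl(-c'\,m(t+1)^{-D\alpha}\bigr).
\]
Setting $t=a\,|\Lambda_i|^{1/(D\alpha)}$ and using $m\asymp|\Lambda_i|$ gives $\bbP[\max\geq a\,|\Lambda_i|^{1/(D\alpha)}]\geq 1-\exp(-c''a^{-D\alpha})\to 1$ as $a\to 0^+$, uniformly in $i$. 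Combined with the upper bound, for any $\eta>0$ one can pick $0<a<b<\infty$ with $\bbP[a\,|\Lambda_i|^{1/(D\alpha)}\leq\max_{v\in\Lambda_i}|\phi(v)|\leq b\,|\Lambda_i|^{1/(D\alpha)}]\geq 1-2\eta$ for all sufficiently large $i$, which is tightness in $(0,\infty)$.

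The main obstacle is the boundary-uniform form of \eqref{eq:fluctuation-LB}; the rest is bookkeeping. This uniformity is what translates the conditional independence between blocks into independent heavy-tailed events, and it relies crucially on $U_e\geq 0$ to control the normalizing constant regardless of the external boundary.
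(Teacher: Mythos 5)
Your proposal is correct and follows essentially the same route as the paper: the upper bound is a union bound over the uniform sub-$D\alpha$ tails from Theorem~\ref{thm:moment-bounds-Zd}, and the lower bound combines a single-site conditional heavy-tail estimate with conditional independence over a well-separated subset of size $\Omega(|\Lambda_i|)$. The only (harmless) differences are technical: the paper works with a distance-$2$ independent set and cancels the normalizing constant by taking the ratio $\bbP[\phi(v)\in[M,3M]]/\bbP[\phi(v)\in[-M,M]]$ (using $U_e\geq 0$ for the denominator), whereas you use distance-$4$ blocks and bound the normalizer directly by $\int e^{-U_e}<\infty$, which is available since $(\alpha,\eps)$-monotonicity with $\alpha>2$ forces $e^{-U_e(y)}\lesssim (1+|y|)^{-\alpha-1}$.
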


The above bounds are two-sided (since $0\notin (0,\infty)$), but we do not know how to remove the absolute value around $\phi(v)$.
In other words, large field fluctuations might occur only in one (random) direction.

\paragraph{Generalized Zero Boundary Conditions.}

So far we have dealt only with wired boundary conditions.
Here we consider more general cases including free boundary conditions, as well as the torus when $G=\bbZ^d$.\footnote{In the former case one takes $S=\{\phi~:~\phi(v_0)=0\}$ in Definition~\ref{def:generalized-boundary}; in the latter one takes $\Lambda=[-L,\dots,L]^d$ and uses $S$ to identify boundary vertices. One could also, for example, pin a larger subset of field values to zero or identify more vertices together.}
However, stemming from our use of the Gaussian correlation inequality, we can handle only centered boundary conditions.

\begin{definition}
\label{def:generalized-boundary}
Fix a finite connected subgraph $\Lambda\subseteq V(G)$, linear subspace $S\subseteq\bbR^{\Lambda}$, and $v_0\in V(G)$ with $\phi(v_0)=0$ for all $\phi\in S$.
With $\de\lambda_S$ Lebesgue measure on $S$, the \textbf{generalized zero boundary Gibbs measure} $\wh\mu_{\Lambda,S,v_0,U}$ on $S$ is 
\[
\wh\mu_{\Lambda,S,v_0,U}
=
\frac{1}{\wh Z_{\Lambda,S,v_0,U}}
\exp \bigg(
    -\sum_{e=\{v,v'\}\in E(\Lambda)} U_e\big(\phi(v)-\phi(v')\big)
    \bigg) 
    \de\lambda_S(\phi).
\]
\end{definition}

For such boundary conditions, we obtain essentially the same results as before, with slightly stronger conditions on $G$ in the case of Theorem~\ref{thm:main}.
However tightness is shown below only after taking the limit $\Lambda\uparrow V(G)$.
This difference is to be expected: for Gaussian free fields, Rayleigh's monotonicity principle states that a smaller domain has better localization under wired boundary conditions, but worse localization under free boundary conditions.

\begin{theorem}
\label{thm:free-boundary}
    For any $\eps>0$, the following holds for any $\eps$-monotone $\vec U$.
    Suppose $G$ is transitive and percolation transient with almost surely unique infinite percolation cluster for some $p<1$, and fix $v_0\in V(G)$.
    Let $\big(\wh\mu_{\Lambda_i,S_i,v_0,U}\big)_{i\in\bbN}$ be a sequence of generalized zero boundary Gibbs measures with $\Lambda_i\uparrow V(G)$, and $v_0\in S_i$ for all $i$.
    \begin{enumerate}
    \item The set $\cG$ of infinite volume Gibbs measures that are weak subsequential limits $\lim\limits_{k\to\infty}\wh\mu_{\Lambda_{i_k},S_{i_k},v_0,U}$ is non-empty.
    \item As $\nu\in\cG$ and $v\in V(G)$ vary, the $\nu$-laws of $\phi(v)$ form a tight family in $\cP(\bbR)$.
    \end{enumerate}
\end{theorem}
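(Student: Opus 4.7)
The plan is to follow the strategy of Theorem~\ref{thm:main} and adapt it to generalized zero boundary conditions. Using Proposition~\ref{prop:monotone-function-representation}, decompose $U_e=V_e+W_e$ so that $e^{-V_e}$ has the centered Gaussian mixture form~\eqref{eq:V-GMM}. Since the FKG-Gaussian correlation inequality of Subsection~\ref{subsec:FKG-GCI} is valid for any centered Gaussian reference measure on a linear subspace, it applies on $S_i$ and reduces tail bounds on $\phi(v)$ under $\wh\mu_{\Lambda_i,S_i,v_0,U}$ to those under $\wh\mu_{\Lambda_i,S_i,v_0,V}$. A second FKG step, identical to the wired case, replaces the joint distribution of the conductances $c_e=1/\kappa_e^2$ by \iid\ draws $\kappa_e\sim\rho$.

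Under the resulting mixture, $\phi(v)\mid c\sim\cN\bigl(0,\sigma^2_{\Lambda_i,S_i,c}(v)\bigr)$, with $\sigma^2_{\Lambda_i,S_i,c}(v)$ the effective resistance from $v$ to $v_0$ in the weighted electrical network on $\Lambda_i$ with the identifications encoded by $S_i$. Rayleigh's monotonicity principle gives $\sigma^2_{\Lambda_i,S_i,c}(v)\leq \sigma^2_{\Lambda_{i_0},S_{i_0},c}(v)$ for all $i\geq i_0$, and in fact $\sigma^2_{\Lambda_i,S_i,c}(v)\downarrow \sigma^2_{G,c}(v,v_0)$, the effective resistance in $G$ with conductances $c$. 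It therefore suffices to bound the tails of $\sigma^2_{G,c}(v,v_0)$ uniformly in $v$.

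Here the percolation hypothesis enters. Choose $K$ large enough that the random subgraph $H_K=\{e:\kappa_e\leq K\}$ is supercritical $p$-Bernoulli bond percolation on $G$ with, by hypothesis, a.s.-unique infinite cluster $\cC_\infty$; by transitivity, $\theta(K)=\bbP[v\in\cC_\infty]$ is independent of $v$ and $\theta(K)\to 1$ as $K\to\infty$. On the event $\{v,v_0\in\cC_\infty\}$, which by FKG has probability at least $\theta(K)^2$, restricting currents to $\cC_\infty$ and using that every conductance there is at least $K^{-2}$ yields $\sigma^2_{G,c}(v,v_0)\leq K^2\, R_{\cC_\infty}(v\lrarrow v_0)$; the right-hand side has a distribution that is tight uniformly in $v$, by transitivity of $G$ together with transience and uniqueness of $\cC_\infty$ (a standard Green's function bound on the infinite cluster). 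Taking $K\to\infty$ absorbs the complementary event and gives uniform-in-$v,i$ tightness of $\phi(v)$ under $\wh\mu_{\Lambda_i,S_i,v_0,U}$.

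Tightness of each marginal implies tightness of $\wh\mu_{\Lambda_i,S_i,v_0,U}$ in the product topology on $\bbR^{V(G)}$, so a diagonal extraction yields a weak subsequential limit, proving (1); the uniform tail bounds pass to every $\nu\in\cG$, proving (2). The main obstacle relative to Theorem~\ref{thm:main} is uniform-in-$v$ control of $\sigma^2_{G,c}(v,v_0)$: in the wired case the contracted boundary vertex $z_\Lambda$ is effectively ``at infinity'' from every $v$, so mere transience of $G$ sufficed, whereas here the single pin $v_0$ may be arbitrarily far from $v$, and transitivity together with a.s.-unique infinite percolation cluster are essential to keep $R_{\cC_\infty}(v\lrarrow v_0)$ tight as $v$ varies.
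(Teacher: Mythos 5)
Your overall strategy is the same as the paper's (decompose $U=V+W$, apply the FKG--Gaussian correlation inequality and the second FKG step to reduce to \iid resistances, then control the effective resistance via the unique transient percolation cluster), but two steps as written do not hold.

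First, the claimed Rayleigh monotonicity $\sigma^2_{\Lambda_i,S_i,c}(v)\leq \sigma^2_{\Lambda_{i_0},S_{i_0},c}(v)$ for $i\geq i_0$, and the limit $\sigma^2_{\Lambda_i,S_i,c}(v)\downarrow\sigma^2_{G,c}(v,v_0)$, are false for general subspaces $S_i$: the $S_i$ are arbitrary subject only to $\phi(v_0)=0$, so the identifications they encode need not be nested, and shorting vertices together can make the resistance for a \emph{smaller} $\Lambda_{i_0}$ (e.g.\ a torus identification) strictly smaller than for a larger $\Lambda_i$ with no identifications. Likewise the limit need not be $\sigma^2_{G,c}(v,v_0)$. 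The correct order of operations, which is what the paper does, is to first discard the subspace constraint: conditioning the centered Gaussian $\wh\mu_{\Lambda_i,v_0,V}$ to the linear subspace $S_i$ only improves symmetric quasi-concave events (Corollary~\ref{cor:tilt-monotonicity}), so $\wh\mu_{\Lambda_i,S_i,v_0,V}\preceq_{\con}\wh\mu_{\Lambda_i,v_0,V}$. Only for the pure free-boundary network on $\Lambda_i$ pinned at $v_0$ does Rayleigh monotonicity give $\sigma^2_{\Lambda_i,v_0,c}(v)\downarrow\sigma^2_{G,c}(v,v_0)$. Note also that this decrease happens only in the limit $\Lambda_i\uparrow V(G)$; for finite $i$ the free-boundary resistance can be arbitrarily large, which is why the theorem asserts tightness only for the subsequential limits $\nu\in\cG$ and not, as you claim, ``uniform-in-$v,i$'' tightness of the finite-volume laws. (For part (1) one only needs, for each fixed $v$, tightness over $i$, which follows from monotonicity once $v\in\Lambda_{i_0}$.)

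Second, the assertion that $R_{\cC_\infty}(v\lrarrow v_0)$ ``has a distribution that is tight uniformly in $v$, by transitivity'' is not justified as stated: transitivity makes the law of a two-point functional invariant only under the diagonal action on $(v,v_0)$, not under moving $v$ with $v_0$ fixed. The missing step is the resistance triangle inequality through infinity, $R_{\cC}(v\lrarrow v_0)\leq R_{\cC}(v\lrarrow\infty)+R_{\cC}(\infty\lrarrow v_0)$, where each summand is a one-point functional whose law is $v$-independent by transitivity and a.s.\ finite by transience of $\cC$; uniqueness of the infinite cluster is exactly what guarantees $v$ and $v_0$ (or their closest cluster points) lie in the \emph{same} transient cluster so that this decomposition applies. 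With these two repairs your argument coincides with the paper's proof.
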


\begin{theorem}
\label{thm:free-boundary-tail}
    Let $G$ be a transient graph and fix $v,v_0\in V(G)$.
    Let $\big(\wh\mu_{\Lambda_i,S_i,v_0,U}\big)_{i\in\bbN}$ be a sequence of generalized zero boundary Gibbs measures with $\Lambda_i\uparrow V(G)$, and $v_0\in S_i$ for all $i$.
    In all cases below, the set $\cG$ of infinite volume Gibbs measures that are weak subsequential limits $\lim\limits_{k\to\infty}\wh\mu_{\Lambda_{i_k},S_{i_k},v_0,U}$ is non-empty. Moreover:
    \begin{enumerate}[label = {(\Alph*)}]
    \item 
    \label{it:alpha-eps-free}
    If $U$ is $(\alpha,\eps)$-monotone for $\alpha> 2$, then the $\nu$-laws of $\phi(v)$ have sub-$\alpha$ tail, uniformly in $\nu\in\cG$.
    \item 
    \label{it:beta-eps-free}
    If $U$ is $(\beta,\eps)$-polynomially monotone for $\beta\in (0,2]$, then then the $\nu$-laws of $\phi(v)$ have stretched sub-$\beta$ tail, uniformly in $\nu\in\cG$.
\end{enumerate}
    In both cases the constant implicit in the uniform tail decay of $\phi(v)$ depends only on $\alpha$ (or $\beta$), $\eps$, and $R_G(v\lrarrow v_0)$. In particular they are uniform in $v$ if $G$ is transitive.
\end{theorem}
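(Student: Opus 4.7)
The strategy is to mirror the proofs of Theorems \ref{thm:moment-bounds} and \ref{thm:stretched-exponential-bounds}, with the contracted wired vertex $z_{\Lambda}$ replaced by $v_0$ and an additional step to pass from the finite-volume measures $\wh\mu_{\Lambda_i, S_i, v_0, U}$ to a subsequential limit. I would first apply Proposition \ref{prop:monotone-function-representation} to write $U_e = V_e + W_e$ with $e^{-V_e}$ a centered Gaussian mixture \eqref{eq:V-GMM}, and then invoke the FKG-Gaussian correlation inequality of Subsection \ref{subsec:FKG-GCI}. Because $S_i$ is a sign-symmetric linear subspace and the gradient potentials $U_e(\phi(v)-\phi(v'))$ are even, the argument applies on $S_i$ and produces a stochastic dominance of symmetric convex events under $\wh\mu_{\Lambda_i, S_i, v_0, U}$ by those under a mixture of centered Gaussian fields on $S_i$ with \iid random edge resistances $(r_e)_{e\in E(\Lambda_i)}$ sampled from the law induced by $\rho$.

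Next I would bound the variance of $\phi(v)$ under each such Gaussian field via effective resistance. Since $S_i$ is contained in the hyperplane $H_i = \{\phi\in\bbR^{\Lambda_i} : \phi(v_0)=0\}$, the monotonicity of Gaussian variance under subspace restriction (conditioning on coordinates orthogonal to $S_i$ within $H_i$ can only decrease variance for a centered Gaussian) gives $\Var_{S_i}(\phi(v)) \leq \Var_{H_i}(\phi(v))$. The latter equals the weighted effective resistance $R^{(r_e)}_{\Lambda_i}(v \lrarrow v_0)$ by the standard GFF identity. Fix any finite-support unit flow $\theta$ on $G$ from $v$ to $v_0$ with unweighted energy at most $R_G(v \lrarrow v_0) + \eta$. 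Thomson's principle then gives $R^{(r_e)}_{\Lambda_i}(v \lrarrow v_0) \leq \sum_{e \in \supp(\theta)} r_e \,\theta(e)^2$ for all $\Lambda_i \supseteq \supp(\theta)$, reducing the variance to a weighted sum of \iid positives whose total weight is at most $R_G(v \lrarrow v_0) + \eta$.

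Averaging the conditional Gaussian tail $\bbP[|\phi(v)| \geq t \mid (r_e)] \leq 2\exp(-t^2/(2X))$, with $X = \sum_{e \in \supp(\theta)} r_e \,\theta(e)^2$, over the \iid $(r_e)$ then has exactly the structure analyzed in the proofs of Theorems \ref{thm:moment-bounds} and \ref{thm:stretched-exponential-bounds}; those computations transfer unchanged, yielding case \ref{it:alpha-eps-free} (sub-$\alpha$ tails) and case \ref{it:beta-eps-free} (stretched sub-$\beta$ tails), with constant depending only on the monotonicity parameters and $R_G(v \lrarrow v_0)$. These uniform tail bounds give tightness of the marginals $\phi(v)$ under $\wh\mu_{\Lambda_i, S_i, v_0, U}$ for all $i$ sufficiently large (depending on $v$); a diagonal extraction over a countable exhaustion of $V(G)$ then produces $\nu \in \cG$ inheriting the same tail bounds. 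The main obstacle I anticipate is verifying that the FKG-Gaussian correlation inequality of Subsection \ref{subsec:FKG-GCI} operates verbatim on the arbitrary linear subspace $S_i$ rather than on the full space $\bbR^{\Lambda_i}$; once this is secured, the Thomson-flow reduction and the tail-integration step are routine adaptations of the wired-boundary proofs.
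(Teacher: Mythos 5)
Your proposal is correct and follows essentially the same route as the paper: decompose $U=V+W$, dominate $\wh\mu_{\Lambda_i,S_i,v_0,U}$ by an \iid-resistance Gaussian mixture via Theorem~\ref{thm:domination-of-products}, bound $\Var(\phi(v))$ by the weighted effective resistance between $v$ and $v_0$ using a fixed finite-energy flow (this is exactly the content of Theorem~\ref{thm:convex-comparison}), and conclude with Propositions~\ref{prop:power-tail-convex} and \ref{prop:stretched-tail-convex}. The obstacle you flag is not one: the framework of Subsection~\ref{subsec:products-1-D} is stated for an arbitrary spanning family $(y_e)$ in $\bbR^n$, so taking the ambient space to be $S_i$ itself (with the gradient functionals restricted to $S_i$) makes the FKG--Gaussian correlation inequality apply verbatim, and the variance comparison $\Var_{S_i}\leq\Var_{H_i}$ is the standard Gaussian conditioning monotonicity you describe.
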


\paragraph{Iterated Laplacian Random Surfaces.}

Since Theorem~\ref{thm:main} involves percolation, the underlying graph is genuinely relevant. 
On the other hand Theorems~\ref{thm:moment-bounds} and \ref{thm:stretched-exponential-bounds}, on $(\alpha,\eps)$-monotone and $(\beta,\eps)$-polynomially monotone $\vec U$, have little to do with the graph structure. For Hamiltonians which are sums of functions $U_e$ applied to linear functionals $\ell_e(\phi)$ of the field, they only require that the corresponding Gaussian model localizes.
A precise general statement to this effect is in Theorem~\ref{thm:convex-comparison} later. 
We present here its consequences for the iterated Laplacian models, as introduced in the Gaussian case by \cite{sakagawa2003entropic} (see also \cite{lodhia2016fractional}).
In particular the bi-Laplacian ``membrane'' model ($j=2$ below) has received significant recent attention \cite{kurt2009maximum,bolthausen2017exponential,cipriani2019scaling,schweiger2020maximum,cipriani2021scaling}. 
Some standard techniques for the $\nabla\phi$ model are no longer available here, as discussed in \cite[Section 1.1]{thoma2021thermodynamic}.

Define as usual the discrete Laplacian $\Delta \phi(x)=\sum_{y\sim x}[\phi(x)-\phi(y)]$.
Then $\Delta^{j/2}\phi$ is defined iteratively for $j\in 2\bbN$, while for odd $j\in 2\bbN+1$ we set $\Delta^{j/2}\phi=\nabla\big(\Delta^{(j-1)/2}\phi\big)$.
If $j$ is odd (as in \eqref{eq:model-def}), given $\vec U=\{U_e\}_{e\in E(G)}$ let
\begin{equation}
\label{eq:model-def-j-odd}
    \de\mu_{G,\Lambda \vec U,j}(\phi)
    \equiv
    \frac{1}{Z_{G,\Lambda \vec U,j}(\phi)} 
    \exp \bigg(
    -\sum_{e\in E(G)} U_e\big(\Delta^{j/2}\phi(e)\big)
    \bigg) 
    \prod_{v \in \Lambda} \de \phi(v)
    \prod_{v'\in V(G)\backslash\Lambda}
    \delta_0(\phi(v')
    .
\end{equation}
If $j$ is even, given $\vec U=\{U_w\}_{w\in V(G)}$ let 
\begin{equation}
\label{eq:model-def-j-even}
    \de\mu_{G_{\Lambda}, \vec U,j}(\phi)
    \equiv
    \frac{1}{Z_{G,\Lambda \vec U,j}(\phi)} 
    \exp \bigg(
    -\sum_{w\in V(G)} U_w\big(\Delta^{j/2}\phi(w)\big)
    \bigg) 
    \prod_{v \in V} \de \phi(v)
    \prod_{v'\in V(G)\backslash\Lambda}
    \delta_0(\phi(v')
    .
\end{equation}
Note that since iterated Laplacians depend on larger $G$-neighborhoods, we have not contracted $G$ to $G_{\Lambda}$. Hence the sums above have infinitely many terms, but only finitely many are non-zero so $\mu_{G_{\Lambda}, \vec U,j}$ is defined.

\begin{theorem}
\label{thm:higher-laplacian}
    Let $G$ be a transitive graph with finite Green's function for $\Delta^j$. Then:
    \begin{enumerate}[label = {(\alph*)}]
        \item 
        \label{it:membrane-moment}
        If $U$ is $(\alpha,\eps)$-monotone for $\alpha> 2$, then the $\mu_{G,\Lambda,\vec U,j}$-tails of $\phi(v)$ are uniformly sub-$\alpha$ as $\Lambda\subseteq V(G)$ varies.
        \item 
        \label{it:membrane-stretch}
        If $U$ is $(\beta,\eps)$-polynomially monotone for $\beta\in (0,2]$, then the $\mu_{G,\Lambda,\vec U,j}$-tails of $\phi(v)$ are uniformly stretched sub-$\beta$ as $\Lambda\subseteq V(G)$ varies.
        \item 
        \label{it:membrane-maximum}
        Corollary~\ref{cor:maximum-value} continues to hold for $\mu_{G,\Lambda,\vec U,j}$ in place of $\mu_{G_{\Lambda},\vec U}$ (for the same class of $\vec U$).
    \end{enumerate}
\end{theorem}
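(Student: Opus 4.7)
The overall plan is to extend the proofs of Theorems~\ref{thm:moment-bounds} and~\ref{thm:stretched-exponential-bounds} to the iterated-Laplacian setting, as foreshadowed in the paragraph preceding the statement. Both \eqref{eq:model-def-j-odd} and \eqref{eq:model-def-j-even} have Hamiltonians of the general form $\sum_a U_a(\ell_a(\phi))$ where $\ell_a$ is a linear functional of $\phi$: for odd $j$ one has $\ell_e(\phi) = \Delta^{(j-1)/2}\phi(v) - \Delta^{(j-1)/2}\phi(v')$ when $e = \{v,v'\}$, and for even $j$ one has $\ell_w(\phi) = \Delta^{j/2}\phi(w)$. In both cases $\sum_a \ell_a\ell_a^\top = \Delta^j$, so the desired estimates reduce via the promised Theorem~\ref{thm:convex-comparison} to bounds on a random-weight Gaussian analog.

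I would apply Proposition~\ref{prop:monotone-function-representation} to decompose each $U_a = V_a + W_a$ with $e^{-V_a}$ a centered Gaussian mixture as in \eqref{eq:V-GMM}, writing $\mu_{G,\Lambda,\vec U,j}$ as an average over i.i.d.\ conductances $(\kappa_a)$ of the Gibbs measure with Hamiltonian $\tfrac12\sum_a \kappa_a \ell_a(\phi)^2 + \sum_a W_a(\ell_a(\phi))$. The FKG-Gaussian correlation inequality dominates $|\phi(v)|$ by its law under the pure quadratic part, since the $W_a$ corrections are even and monotone in $|\ell_a(\phi)|$ and hence only tighten the distribution. Conditional on $(\kappa_a)$ the remaining field is centered Gaussian with $\Var[\phi(v)] = (A_\kappa^{-1})_{vv}$ for $A_\kappa = \sum_a \kappa_a \ell_a\ell_a^\top$, and Thomson's principle yields
\[
(A_\kappa^{-1})_{vv} \;\leq\; \sum_a g_a^2/\kappa_a
\]
for any flow $g = (g_a)$ with $\sum_a g_a\ell_a = \delta_v$. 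The finite-Green's-function hypothesis produces the canonical choice $g_a = \ell_a((\Delta^j)^{-1}\delta_v)$, which satisfies this flow condition (since $\sum_a \ell_a\ell_a^\top = \Delta^j$) and has total energy $\sum_a g_a^2 = G_{\Delta^j}(v,v) < \infty$. This exhibits $(A_\kappa^{-1})_{vv}$ as a weighted sum of the i.i.d.\ random variables $1/\kappa_a$ with deterministic weights summing to $G_{\Delta^j}(v,v)$.

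From this point the tail estimates on $\rho$ derived from $(\alpha,\eps)$-monotonicity (polynomial tail on $1/\kappa$) or $(\beta,\eps)$-polynomial monotonicity (the appropriate stretched-exponential tail on $1/\kappa$) transfer to $(A_\kappa^{-1})_{vv}^{1/2}$ and thence to $\phi(v)$ by the same moment and exponential-moment arguments used in Theorems~\ref{thm:moment-bounds} and~\ref{thm:stretched-exponential-bounds}, establishing parts~\ref{it:membrane-moment} and~\ref{it:membrane-stretch}. For~\ref{it:membrane-maximum}, the upper tail of $(\log|\Lambda_i|)^{-1/\beta}\max_{v\in\Lambda_i}|\phi(v)|$ follows from~\ref{it:membrane-stretch} and a union bound over transitive translates of $v$, while the matching lower bound, ensuring subsequential limits are supported away from $0$, reproduces the local argument of Corollary~\ref{cor:maximum-value}: condition $\phi$ off a bounded neighborhood of a chosen vertex and use $U_e(x) \leq C(x^\beta + 1)$ to force a deviation of order $(\log|\Lambda_i|)^{1/\beta}$ somewhere in $\Lambda_i$.

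The main obstacle is the variance control at the Thomson-principle step. In the $j = 1$ case one can delete edges with small $\kappa_e$ and analyze the effective resistance of the resulting subgraph via percolation; for $j \geq 2$ no such subgraph picture exists because the functionals $\ell_a$ overlap on shared vertices and cannot be severed independently. Working directly in the dual current formulation with the deterministic flow $g_a = \ell_a((\Delta^j)^{-1}\delta_v)$ circumvents this, but one must verify that $\sum_a g_a^2/\kappa_a$ — a weighted sum of i.i.d.\ heavy-tailed or stretched-exponential variables — concentrates sufficiently well. This is where the finiteness of $G_{\Delta^j}(v,v) = \sum_a g_a^2$ plays its essential role: it bounds the total weight and reduces the question to standard tail bounds for weighted i.i.d.\ sums.
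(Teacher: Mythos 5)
Your proposal follows essentially the same route as the paper: both models are cast in the general framework of \eqref{eq:product-of-functions}--\eqref{eq:F-abstract-form} with $M_a=\ell_a^{\otimes 2}$ and $F(1)=\Delta^j$ (restricted to $\Lambda$), the FKG--Gaussian correlation inequality reduces to the i.i.d.\ resistance mixture, and Theorem~\ref{thm:convex-comparison} with the flow $g_a=\ell_a((\Delta^j)^{-1}\delta_v)$ of energy $G_{\Delta^j}(v,v)$ gives parts~\ref{it:membrane-moment} and~\ref{it:membrane-stretch}; note that no separate ``concentration of weighted i.i.d.\ sums'' step is needed beyond the Jensen bound already built into Theorem~\ref{thm:convex-comparison}. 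The one detail you leave implicit in part~\ref{it:membrane-maximum} is that the lower-bound argument of Corollary~\ref{cor:maximum-value} requires the chosen set $\cI$ to be $j$-separated (an independent set in the distance-at-most-$j$ graph, of size still $\gtrsim |\Lambda_i|/D^j$) so that $\{\phi(v):v\in\cI\}$ remain conditionally independent given $\phi|_{\cI^c}$ under the range-$j$ interaction; with that adjustment the argument goes through as you describe.
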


Theorem~\ref{thm:higher-laplacian} applies for $\bbZ^d$ when $d\geq 2j+1$ as shown by \cite{sakagawa2003entropic}. Meanwhile the Gaussian model delocalizes for $d\leq 2j$.
\cite{cipriani2023maximum} recently studied the Gaussian membrane model ($j=2$) on $m$-regular trees for $m\geq 3$, showing finiteness of the Green's function there as well.
As in \cite{sakagawa2003entropic}, one may also consider Hamiltonians $H(\phi)=\sum_{j=1}^J H_j(\phi)$ that combine multiple values of $j$.
When all potentials are monotone, the localization results of Theorem~\ref{thm:higher-laplacian} hold for such $H$ if there exists an individual $H_j$ satisfying the relevant hypothesis.

\subsection{Related Work}
\label{subsec:related}

The main localization result of \cite{brascamp1975statistical} requires $U$ to be uniformly convex, and even the extension to general convex $U$ remains open despite significant effort.
\cite{bricmont1982surface} proved localization for $U(x)=|x|$ using infrared bounds.
\cite{magazinov2022concentration} handled convex functions with almost-everywhere nonzero second derivative on the torus, in particular resolving the cases $|x|^p$ with $p>1$. \cite{dario2023upper} proved localization for a class of convex potentials with asymptotically super-quadratic growth that can be linear on large finite intervals.
Several other results for mild perturbations of convex potentials (not all on localization) are available in \cite{brydges1990grad,cotar2009strict,cotar2012decay,adams2016strict}.

The behavior of $\max_{v\in\Lambda}\phi(v)$ has been intensely studied on $\bbZ^2$ for the Gaussian free field as surveyed in \cite{biskup2020extrema}.
For uniformly convex $\nabla\phi$ models this was studied in \cite{belius2016maximum}.

Other aspects of $\nabla\phi$ models are of interest besides localization. For uniformly convex potentials, \cite{funaki1997motion} gave a mean-curvature description of Langevin dynamics, proving along the way that ergodic Gibbs measures are uniquely determined by their slopes; see \cite{yau1991relative,funaki2001large,armstrong2022quantitative} for further results on dynamics. 
A large deviations principle was obtained in \cite{deuschel2000large}, while
\cite{sheffield2005random} unified and extended these results to encompass $\bbZ$-valued fields.
Recently \cite{armstrong2022c2,armstrong2022quantitative} proved the surface tension (free energy) is a $C^2$ function of the slope for uniformly convex potentials.
Another important phenomenon is macroscopic convergence of the fluctuations to the continuum Gaussian free field, shown under various conditions in \cite{naddaf1997homogenization,giacomin2001equilibrium,hilger2016scaling,armstrong2023scaling}.
For uniformly convex $U$ in $\bbZ^2$, \cite{miller2011fluctuations} showed such a result under general continuous boundary conditions, while \cite{wu2022local} obtained a local central limit theorem for individual field values. 
Under the same conditions, \cite{miller2010universality} proved the convergence of zero contour lines to $SLE(4)$, extending the Gaussian free field case treated in \cite{schramm2009contour} (see also \cite{schramm2013contour}).

The class of Gaussian mixture potentials \eqref{eq:V-GMM} was first considered in the important works \cite{biskup2007phase,biskup2011scaling} as a tractable example of genuine non-convexity. These works showed Gibbs measures can be non-unique even when $\rho$ is supported on two points, but that all zero-slope ergodic Gibbs measures have the Gaussian free field as macroscopic scaling limit when $\rho$ is compactly supported in $(0,\infty)$. 
As in the present work, the idea was to represent $\mu_{G_{\Lambda},V}$ as a mixture of weighted Gaussian free fields, \ie a random conductance model.
\cite{brydges2012fluctuation,ye2019models} proved localization for $V(x)=(1+x^2)^{\alpha}$ with $\alpha\leq 1/2$ by representing it in the form \eqref{eq:V-GMM} and exploiting $\alpha$-stability of (a simple transform of) the associated $\rho$.
\cite{buchholz2021phase} observed that the FKG inequality can be applied to conductances, which was also exploited in \cite{buchholz2023aizenman}. This is the same reason that the FKG inequality is available for us (though we emphasize that the additional ingredient of Gaussian domination is crucial for our approach).
Random conductance models and their associated random walks have been studied extensively in their own right, see e.g. \cite{berger2008anomalous,mathieu2008quenched,biskup2011recent,andres2014invariance,andres2016harnack,andres2020green,dario2021quantitative}.

A related representation when $-V$ (rather than $e^{-V}$ ) is itself a mixture of Gaussian densities was recently introduced in \cite{mukherjee2018identification} to study Fröhlich Polaron, where the Polaron measure was identified in the infinite volume limit as a Gaussian mixture over a tilted Poisson point process on the space of intervals on the real line (see also \cite{betz2022functional,betz2022effective} where this approach has been used). In \cite{bazaes2023effective} we also study Polaron’s effective mass using this point process approach. At one step there, we also apply the FKG-Gaussian correlation inequality to the law of the tilted Poisson process from \cite{mukherjee2018identification}. Thus, while the Gaussian fields in the current work are parametrized by a deformed product measure, for the Polaron one instead uses a deformed Poisson point process. Our previous work \cite{sellke2022almost,betz2023mean} also used the Gaussian correlation inequality to dominate the Polaron and similar path measures by mixtures of Gaussians.
However the details are quite different: there the mixture is used to isolate atypical time intervals, and the FKG inequality plays no role.

\subsection{Notations and Definitions}
\label{subsec:setup}

We write $\cP(\bbR^d)$ for the space of Borel probability measures on $\bbR^d$. 
For $\nu\in \cP(\bbR^d)$ and non-negative $f:\bbR^d\to [0,\infty)$ with $\bbE^{\nu}[f]\in (0,\infty)$, we write $\nu^{(f)}$ for the reweighting $\de\nu^{(f)}(x)\propto f(x)\de\nu(x)$. 
Explicitly,
\begin{equation}
\label{eq:reweight-def}
\nu^{(f)}(A)=\frac{\int_A f(x)\de \nu(x)}{\int_{\bbR^d} f(x)\de\nu(x)}.
\end{equation}
Let $\cS^d_+$ be the set of positive semi-definite $d\times d$ matrices, and write $M_1\preceq_{\PSD} M_2$ if $M_2-M_1\in \cS^d_+$.

Given $\xi,\xi'\in \bbR^d$, we write $\xi\preceq_{\ord} \xi'$ if $\xi_i\leq \xi'_i$ for all $1\leq i\leq d$. 
Also let $\xi\wedge \xi'$ and $\xi\vee\xi'$ respectively denote the coordinate-wise minimum and maximum.
We say a function $f:\bbR^d\to\bbR$ is \emph{coordinate-wise increasing} if $f(\xi)\leq f(\xi')$ whenever $\xi\preceq_{\ord}\xi'$.
We say $f:\bbR^d\to\bbR_+$ is \emph{log-supermodular} if $f(\xi)f(\xi')\leq f(\xi\wedge \xi')f(\xi\vee\xi')$ for all $\xi,\xi'\in\bbR^d$; an absolutely continuous probability measure on $\bbR^d$ is log-supermodular if its density with respect to Lebesgue measure is.
For $\nu,\nu'\in\cP(\bbR^d)$ we write $\nu\preceq_{\stoc}\nu'$ if there exists a coupling of $(x,y)\in\bbR^d\times\bbR^d$ with $x\preceq_{\ord} y$ almost surely, $x\sim\nu$, and $y\sim\nu'$.
These notions are especially relevant for the FKG inequality.

We say a function $f:\bbR^d\to\bbR$ is \emph{symmetric} if $f(x)=f(-x)$ for all $x$, and \emph{quasi-concave} if $f$ is everywhere non-negative and the super level sets $\{x\in \bbR^d~:~f(x)\geq C\}$ are convex for all $C\in\bbR$.
Given probability measures $\mu,\nu\in\cP(\bbR^d)$, we write $\mu\preceq_{\con} \nu$ if $\int f(x)\de\mu(x)\geq \int f(x)\de \nu(x)$ for all symmetric quasi-concave $f:\bbR^d\to\bbR$.
These notions are especially relevant for the Gaussian correlation inequality.

The later parts of this paper use notions of electrical networks on graphs, as explained in \cite[Chapter 2]{lyons2017probability}.
Given a locally finite graph $G$ with $v,w\in V(G)$, and $\xi:E(G)\to\bbR_+$, we write $R_{G,\xi}(v\lrarrow w)$ for the effective resistance between $v,w$ in the electrical network on $G$ with edge resistances $\xi_e^{2}$.
This defines a metric on $V(G)$, and the basic connection with Gaussian free fields is the identity
\begin{equation}
\label{eq:variance-equals-resistance}
    R_{G_{\Lambda},\xi}(v\lrarrow z_{\Lambda})
    =
    \bbE^{\gamma_{G_{\Lambda},\xi}}[\phi(v)^2].
\end{equation}
Here for $\xi\in (0,\infty)^{E(\Lambda)}$, the measure $\gamma_{G_{\Lambda},\xi}(\phi)$ is the wired Gaussian free field on $G_{\Lambda}$ with resistance $\xi_e^{2}$ on edge $e$:
\begin{equation}
\label{eq:GFF-def}
\de\gamma_{G_{\Lambda},\xi}(\phi)
\equiv
\frac{1}{Z_{G, \xi}} 
\exp \bigg(
-
\frac{1}{2}
\sum_{e=\{v,v'\}\in E(G_{\Lambda})} 
\xi_e^{-2}\cdot\big(\phi(v)-\phi(v')\big)^2
\bigg) \delta_0(\phi(z_{\Lambda})) \prod_{v \in V} 
\de \phi(v).
\end{equation}
We write $R_G(v\lrarrow w)$ to denote the same quantity on an unweighted network with $\xi\equiv 1$. We also recall Rayleigh's monotonicity principle: for any $\wh\xi:E(G)\to \bbR_+$ with $\wh\xi|_{E(G_{\Lambda})}=\xi$,
\begin{equation}
\label{eq:Rayleigh-monotonicity}
    R_{G_{\Lambda},\xi}(v\lrarrow z_{\Lambda})
    \leq 
    R_{G,\wh\xi}(v\lrarrow \infty).
\end{equation}
Moreover the connected graph $G$ is transient if and only if $R_{G}(v\lrarrow \infty)<\infty$ for some (or all) $v$.

\section{Correlation Inequalities and Confinement}
\label{sec:FKG-GCI}

This section develops our main tools. After reviewing the FKG and Gaussian correlation inequalities in Subsection~\ref{subsec:correlation-prelims}, we prove the general FKG-Gaussian correlation inequality as Theorem~\ref{thm:FKG-GCI} in Subsection~\ref{subsec:FKG-GCI}.
In Subsection~\ref{subsec:products-1-D} we specialize to densities which are products of $1$-dimensional functions; this still encompasses \eqref{eq:model-def} as explained in Subsection~\ref{subsec:specialization-grad-phi}. Finally Subsection~\ref{subsec:tail-bounds-general} proves Theorem~\ref{thm:convex-comparison}, a general confinement bound for $1$-dimensional projections of such densities.

\subsection{Preliminaries on FKG and Gaussian Correlation Inequalities}
\label{subsec:correlation-prelims}

The first correlation inequality we use is the FKG inequality from \cite{fortuin1971correlation}.

\begin{proposition}[FKG Inequality on $\bbR^d$]
\label{prop:FKG}
    Let $\nu\in\cP(\bbR^d)$ be log-supermodular.
    Then $\nu$ is positively associated in the sense that for any coordinate-wise increasing $f,g:\bbR^d\to\bbR_+$,
    \begin{equation}
    \label{eq:FKG-main}
    \int f(x)g(x)\de \nu(x)
    \geq 
    \int f(x)\de \nu(x)
    \int g(x)\de \nu(x).
    \end{equation}
\end{proposition}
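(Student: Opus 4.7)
I would prove this by induction on the dimension $d$, following the standard continuous adaptation of the classical FKG argument. The base case $d=1$ is Chebyshev's integral inequality: log-supermodularity is vacuous, and for increasing $f,g:\bbR\to\bbR_+$ the integrand $(f(x)-f(y))(g(x)-g(y))$ is pointwise non-negative, so integrating against $\de\nu(x)\de\nu(y)$ and expanding yields the claim immediately.

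For the inductive step, disintegrate $\nu$ along the last coordinate: write $\de\nu = \de\nu_t(x_1,\ldots,x_{d-1})\,\de\nu_d(t)$, where $\nu_d$ is the marginal on the last coordinate and $\nu_t$ is the conditional on $\bbR^{d-1}$ given $x_d=t$, with density $\rho_t$. Two facts drive the induction. First, each $\nu_t$ is log-supermodular on $\bbR^{d-1}$, obtained by restricting the defining inequality to slices of equal last coordinate. Second, the family $\{\nu_t\}_{t\in\bbR}$ is stochastically increasing in $t$ in the coordinate-wise order. Granting both, the inductive hypothesis applied to $\nu_t$ with the increasing sections $f(\cdot,t), g(\cdot,t)$ gives
\[
\bbE^{\nu_t}[fg] \;\geq\; F(t)\,G(t), \qquad F(t) \defeq \bbE^{\nu_t}[f(\cdot,t)], \quad G(t) \defeq \bbE^{\nu_t}[g(\cdot,t)].
\]
Since $\nu_t$ is stochastically increasing in $t$ and $f,g$ are coordinate-wise increasing in all $d$ arguments, both $F$ and $G$ are increasing in $t$. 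Applying the $d=1$ case to $(F,G,\nu_d)$ then finishes:
\[
\bbE^\nu[fg] \;=\; \int \bbE^{\nu_t}[fg]\,\de\nu_d(t) \;\geq\; \int F(t)G(t)\,\de\nu_d(t) \;\geq\; \int F\,\de\nu_d \int G\,\de\nu_d \;=\; \bbE^\nu[f]\,\bbE^\nu[g].
\]

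The main obstacle is the stochastic monotonicity claim. For $t<t'$, log-supermodularity of the joint density applied to the $\bbR^d$-points $(x,t)$ and $(y,t')$ --- noting $(x,t)\wedge(y,t')=(x\wedge y,t)$ and $(x,t)\vee(y,t')=(x\vee y,t')$ --- yields the Holley-type inequality $\rho_t(x)\rho_{t'}(y)\leq \rho_t(x\wedge y)\,\rho_{t'}(x\vee y)$ after dividing by the marginal densities. This is precisely the hypothesis of Holley's inequality, which delivers $\nu_t \preceq_{\stoc} \nu_{t'}$. Holley's inequality admits a parallel induction-on-dimension proof, so cleanly one runs a simultaneous induction establishing both the FKG inequality and the Holley consequence for FKG-monotone pairs. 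An alternative route that avoids this intertwining is to discretize $\bbR^d$ onto a fine lattice $N^{-1}\bbZ^d \cap [-K,K]^d$, invoke the classical FKG inequality on finite distributive lattices (which has a direct combinatorial proof), and pass to the limit by approximating $\nu,f,g$ by bounded objects and applying dominated convergence.
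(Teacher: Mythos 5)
The paper does not prove Proposition~\ref{prop:FKG}; it is stated as a known result and attributed to \cite{fortuin1971correlation}, so there is no in-paper argument to compare against. Your proof sketch is a correct and standard derivation of the continuous-space FKG inequality. The induction on dimension via disintegration along the last coordinate is sound: restricting log-supermodularity to pairs with equal last coordinate shows each conditional $\nu_t$ is log-supermodular on $\bbR^{d-1}$, while the cross-coordinate application (taking $(x,t)$, $(y,t')$ with $t<t'$, so the meet lands at level $t$ and the join at $t'$) yields exactly the Holley comparison condition $\rho_t(x)\rho_{t'}(y)\leq\rho_t(x\wedge y)\rho_{t'}(x\vee y)$, hence $\nu_t\preceq_{\stoc}\nu_{t'}$, hence monotonicity of $F,G$. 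You correctly flag the one structural wrinkle: Holley's inequality is of the same depth as FKG, so either one runs a simultaneous induction (cleanest would be to induct on the Ahlswede--Daykin four-functions theorem, which yields both FKG and Holley in one stroke), or one discretizes onto $N^{-1}\bbZ^d\cap[-K,K]^d$, observes that the restriction of a log-supermodular density to a sublattice is log-supermodular, invokes finite-lattice FKG, and passes to the limit. There remain small technical loose ends you elide --- disintegration gives regular conditional densities only for $\nu_d$-a.e.\ $t$, so $F,G$ are defined and monotone only a.e.\ (harmless for the final Chebyshev step since one integrates against $\nu_d$); and in the discretization route, convergence of the lattice approximants to $\nu$ requires some mild regularity of the density, handled by mollification. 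These are acceptable gaps for a result this classical, and your argument captures the essential structure.
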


The following lemma is well known, and with the FKG inequality directly implies Proposition~\ref{prop:FKG} below. 

\begin{lemma}[{\cite[Corollary 24.7]{bhattacharya2022special}}]
\label{lem:monotone-coupling-basic}
    Let $\nu_1,\nu_2\in \cP(\bbR^d)$.
    Then $\nu_1\succeq_{\stoc} \nu_2$ if and only if 
    for all coordinate-wise increasing $f:\bbR^d\to\bbR_+$, we have $
    \int f(\xi)\de\nu_1(\xi)
    \geq 
    \int f(\xi)\de\nu_2(\xi)$.
\end{lemma}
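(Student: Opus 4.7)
The plan is to prove both directions of the equivalence, one of which is immediate. For the easy implication: if $(x,y)$ is a coupling with $x\sim\nu_2$, $y\sim\nu_1$, and $x\preceq_{\ord} y$ almost surely, then any coordinate-wise increasing $f:\bbR^d\to\bbR_+$ satisfies $f(x)\leq f(y)$ pointwise, and integrating yields $\int f\,\de\nu_1\geq \int f\,\de\nu_2$.

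For the converse, I would recognize this as Strassen's theorem for the coordinate-wise partial order on $\bbR^d$ and proceed in two steps. First, I would reduce the integral hypothesis to a set-based one: for any closed upward-closed set $A\subseteq\bbR^d$, applying the hypothesis to $f=\mathbbm{1}_A$ (which is bounded, Borel, and coordinate-wise increasing) gives $\nu_1(A)\geq \nu_2(A)$. Second, I would construct a monotone coupling. For compactly supported $\nu_1,\nu_2$ this can be done via linear-programming duality: the set of probability measures on $\bbR^d\times\bbR^d$ with marginals $\nu_2$ and $\nu_1$ and support in $\{(x,y):x\preceq_{\ord} y\}$ is nonempty if and only if $\nu_1(A)\geq\nu_2(A)$ for every closed upper set $A$. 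This max-flow/min-cut style equivalence follows from Hahn--Banach separation applied to the closed convex cone of coordinate-wise increasing continuous functions. The general case is then recovered by truncating $\nu_1,\nu_2$ to compact sets, coupling the truncated laws, and extracting a weakly convergent subsequence using tightness of $\{\nu_1,\nu_2\}$.

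The main obstacle is precisely the coupling construction in Strassen's theorem, which requires either the functional-analytic separation argument just sketched or a delicate probabilistic construction via quantile-style transport (the latter being clean only in one dimension). Since this is a classical result, in practice I would defer the detailed proof to \cite[Corollary 24.7]{bhattacharya2022special}, as the paper does.
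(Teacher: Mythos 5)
The paper offers no proof of this lemma at all---it is stated with the citation to \cite[Corollary 24.7]{bhattacharya2022special} and used as a black box---so your decision to defer to that reference matches the paper exactly, and your sketch of the standard Strassen-theorem argument (easy direction by integrating $f(x)\leq f(y)$ over a monotone coupling; hard direction by reducing to upper sets and invoking LP/Hahn--Banach duality) is the correct classical route. The only step of your sketch I would flag is the compactness reduction: conditioning $\nu_1,\nu_2$ on a box need not preserve the hypothesis $\int f\,\de\nu_1\geq\int f\,\de\nu_2$, so the general Polish-space version of Strassen's theorem is usually proved by a different approximation (e.g.\ discretization or the duality theorem on Polish spaces directly), but this is exactly the technicality the cited reference handles.
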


\begin{corollary}
\label{cor:FKG-domination}
    Let $\Gamma$ be a log-supermodular probability density on $\bbR^d$, and let $f:\bbR^d\to\bbR_+$ be coordinate-wise increasing (resp. decreasing).
    Then $\Gamma^{(f)}\succeq_{\stoc}\Gamma$ (resp. $\Gamma^{(f)}\preceq_{\stoc}\Gamma$), where we have used the notation \eqref{eq:reweight-def}.
\end{corollary}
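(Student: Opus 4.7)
The plan is to derive this directly from the FKG inequality (Proposition~\ref{prop:FKG}) combined with the characterization of stochastic domination in Lemma~\ref{lem:monotone-coupling-basic}. For the increasing case, by Lemma~\ref{lem:monotone-coupling-basic} it suffices to verify that for every coordinate-wise increasing $g:\bbR^d\to\bbR_+$,
\[
\int g(x)\,\de\Gamma^{(f)}(x) \;\geq\; \int g(x)\,\de\Gamma(x).
\]
Unpacking the definition \eqref{eq:reweight-def} of the reweighted measure, this is equivalent to
\[
\int f(x)g(x)\,\de\Gamma(x) \;\geq\; \int f(x)\,\de\Gamma(x)\cdot \int g(x)\,\de\Gamma(x),
\]
which is exactly the FKG inequality \eqref{eq:FKG-main} for $\Gamma$ with the two coordinate-wise increasing test functions $f$ and $g$. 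Since $\Gamma$ is assumed log-supermodular, Proposition~\ref{prop:FKG} delivers this immediately, and the increasing case is complete.

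For the decreasing case I would reduce to the increasing case by simultaneous coordinate reflection. Set $\tilde\Gamma(x)\defeq \Gamma(-x)$ and $\tilde f(x)\defeq f(-x)$. Substituting $-x,-y$ for $x,y$ in the defining inequality of log-supermodularity merely interchanges $\wedge$ and $\vee$ on both sides, so $\tilde\Gamma$ is again log-supermodular; meanwhile $\tilde f$ is coordinate-wise increasing. Moreover $(\tilde\Gamma)^{(\tilde f)}$ is the pushforward of $\Gamma^{(f)}$ under $x\mapsto -x$, and similarly for $\tilde\Gamma$ and $\Gamma$. Applying the already-proved increasing case gives $(\tilde\Gamma)^{(\tilde f)}\succeq_{\stoc}\tilde\Gamma$, and negating the corresponding monotone coupling yields $\Gamma^{(f)}\preceq_{\stoc}\Gamma$.

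I do not anticipate a real obstacle here: the corollary is essentially a restatement of Proposition~\ref{prop:FKG} through the lens of Lemma~\ref{lem:monotone-coupling-basic}, with the only nontrivial manipulation being the clearing of the normalizing constant $\int f\,\de\Gamma$ coming from \eqref{eq:reweight-def}. The reflection trick in the decreasing case is purely notational, so both halves of the statement are immediate once the right inequality is written down.
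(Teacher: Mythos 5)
Your proof is correct and takes essentially the same approach the paper intends: the paper states the corollary follows directly from the FKG inequality together with Lemma~\ref{lem:monotone-coupling-basic} without writing out the details, and your proposal supplies exactly those details (clearing the normalizer to reduce to \eqref{eq:FKG-main}, and handling the decreasing case via coordinate reflection).
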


We now turn to the Gaussian correlation inequality, shown by Royen in \cite{royen2014simple} (see also \cite{latala2017royen}).

\begin{proposition}
\label{prop:GCI}
    Let $\gamma$ be a centered Gaussian measure on $\bbR^d$. For any symmetric quasi-concave $f_1,f_2,\dots,f_n$:
    \begin{equation}
    \label{eq:GCI}
    \bbE^{x\sim \gamma}\lt[\prod_{j=1}^m f_j(x)\rt]\cdot
    \bbE^{x\sim \gamma}\lt[\prod_{k=m+1}^{n} f_k(x)\rt]
    \leq
    \bbE^{x\sim \gamma}\lt[\prod_{i=1}^n f_i(x)\rt].
    \end{equation}
\end{proposition}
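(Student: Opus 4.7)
The plan is to reduce the functional inequality (\ref{eq:GCI}) to the classical two-set Gaussian correlation inequality
\begin{equation*}
\gamma(K)\,\gamma(L)\;\leq\;\gamma(K\cap L)
\end{equation*}
for symmetric convex $K,L\subseteq\bbR^d$, which is Royen's theorem from \cite{royen2014simple}, and then invoke that as a black box.

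For the reduction I would use the layer-cake representation. Since each $f_i\geq 0$ (non-negativity is built into quasi-concavity as defined in Subsection~\ref{subsec:setup}), its super-level sets $K_i(t):=\{x\in\bbR^d:f_i(x)\geq t\}$ are symmetric (because $f_i$ is even) and convex (by quasi-concavity). Substituting $f_i(x)=\int_0^\infty \one_{K_i(t)}(x)\,dt$ into both sides of (\ref{eq:GCI}), expanding the products, and applying Fubini converts the statement into
\begin{equation*}
\gamma\!\Big(\bigcap_{j=1}^{m}K_j(t_j)\Big)\,\gamma\!\Big(\bigcap_{k=m+1}^{n}K_k(t_k)\Big)\;\leq\;\gamma\!\Big(\bigcap_{i=1}^{n}K_i(t_i)\Big),
\end{equation*}
required for all thresholds $t_1,\dots,t_n>0$. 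Since a finite intersection of symmetric convex sets is again symmetric and convex, grouping the first $m$ and the last $n-m$ intersections reduces this to the two-set case. (A mild truncation argument handles any integrability issues if some $\bbE^{\gamma}[\prod f_i]=+\infty$, since then the claimed inequality is either trivial or reduces to the finite case after cutting off each $f_i$ at height $M\uparrow\infty$.)

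For a self-contained proof of the two-set version I would follow Royen's argument as streamlined in \cite{latala2017royen}. In brief: approximate $K$ and $L$ by intersections of finitely many symmetric slabs $\{x:|\ell_a(x)|\leq 1\}$, so that $\gamma(K)$, $\gamma(L)$, $\gamma(K\cap L)$ become probabilities of the form $\bbP[\max_a|Z_a|\leq 1]$ for a centered Gaussian vector $Z\in\bbR^{p+q}$. Then interpolate along a one-parameter family of joint covariances in which the cross-covariance block between the ``$K$-coordinates'' and ``$L$-coordinates'' is scaled by $t\in[0,1]$, so that the probability factorizes at $t=0$ and equals $\gamma(K\cap L)$ at $t=1$. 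The main obstacle, and the only substantive step, is showing this probability is non-decreasing in $t$: Royen's identity expresses the $t$-derivative as a non-negative integral via a representation of the multivariate Gaussian density as a scale mixture over chi-squared variables, where the symmetry of both $K$ and $L$ is essential to cancel the odd-function terms that appear after integration by parts. The layer-cake and grouping steps above are routine once this ingredient is in hand.
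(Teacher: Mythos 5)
Your argument is correct and follows essentially the same route as the paper: a layer-cake/Fubini reduction to indicator functions of symmetric convex sets, followed by the observation that such sets are closed under finite intersection, which collapses the general statement to the two-set Gaussian correlation inequality of Royen. The paper simply invokes Royen's theorem as a black box rather than sketching the chi-square-mixture interpolation, but that is only a difference in presentation.
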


\begin{proof}
    By Fubini's theorem it suffices to assume each $f_i=1_{K_i}$ is the indicator of a symmetric convex set. 
    As such sets are closed under intersection we are immediately reduced to the case $(m,n)=(1,2)$. This is the usual Gaussian correlation inequality proved in \cite{royen2014simple}.
\end{proof}

\begin{corollary}
\label{cor:GCI}
    Let $\gamma,\nu$ be symmetric probability measures on $\bbR^d$ with $\gamma$ Gaussian and $\de\nu/\de\gamma$ a finite product of symmetric quasi-concave functions. Then $\nu\preceq_{\con} \gamma$.
\end{corollary}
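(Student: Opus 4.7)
The plan is essentially to unpack the definition of $\preceq_{\con}$ and apply Proposition~\ref{prop:GCI} once. First I would write the density as $\de\nu/\de\gamma = \prod_{i=1}^n f_i$, where each $f_i$ is symmetric and quasi-concave, normalized so that $\bbE^{\gamma}[\prod_{i=1}^n f_i]=1$ (absorbing any normalizing constant into one of the $f_i$'s, which preserves both symmetry and quasi-concavity since multiplying a non-negative quasi-concave function by a positive scalar keeps all super-level sets convex).

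Next, fix an arbitrary symmetric quasi-concave test function $f_0:\bbR^d\to\bbR$ (which by the paper's convention is non-negative). The key computation is
\[
\int f_0 \de\nu
=
\bbE^{x\sim\gamma}\lt[f_0(x)\prod_{i=1}^n f_i(x)\rt].
\]
Now I would apply Proposition~\ref{prop:GCI} with the partition $\{f_0\}\sqcup \{f_1,\dots,f_n\}$, which yields
\[
\bbE^{\gamma}[f_0]\cdot \bbE^{\gamma}\lt[\prod_{i=1}^n f_i\rt]
\leq
\bbE^{\gamma}\lt[f_0\prod_{i=1}^n f_i\rt].
\]
Using the normalization $\bbE^{\gamma}[\prod_{i=1}^n f_i]=1$, the left side is $\int f_0 \de\gamma$, so this rearranges exactly to $\int f_0\de\nu \geq \int f_0\de\gamma$. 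Since $f_0$ was arbitrary, this is the definition of $\nu\preceq_{\con}\gamma$.

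There isn't really a significant obstacle: the work was already done in proving the Gaussian correlation inequality (Proposition~\ref{prop:GCI}) in its $n$-fold form. The only sanity check worth making explicit is that the hypothesis of Proposition~\ref{prop:GCI} applies — namely that each $f_i$ (and $f_0$) is symmetric quasi-concave, and that the expectations are all finite. Finiteness follows since $\nu$ and $\gamma$ are probability measures and quasi-concave functions are non-negative; and we may truncate $f_0$ at a large constant $M$ (taking $f_0 \wedge M$, which is still symmetric and quasi-concave) to make all integrals finite, then send $M\to\infty$ and apply monotone convergence to remove the truncation.
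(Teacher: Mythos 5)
Your argument is correct and is precisely the intended one: the paper states this corollary without proof as an immediate consequence of Proposition~\ref{prop:GCI}, and your unpacking of $\preceq_{\con}$ together with the partition $\{f_0\}\sqcup\{f_1,\dots,f_n\}$ is exactly that deduction. The truncation remark for integrability is a reasonable extra precaution but not a substantive difference.
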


\begin{corollary}
\label{cor:tilt-monotonicity}
    Let $\gamma_i\propto e^{-\la x,Q_i x\ra/2}\de x$ for $i\in \{1,2\}$ be centered Gaussian measures on $\bbR^d$, with $Q_1\succeq_{\PSD} Q_2$. 
    Then for any symmetric quasi-concave $f:\bbR^d\to\bbR$, we have $\int f(x)\de\gamma_1(x)\geq \int f(x)\de\gamma_2(x)$.
\end{corollary}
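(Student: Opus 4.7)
The plan is to derive Corollary~\ref{cor:tilt-monotonicity} as an immediate consequence of the Gaussian correlation inequality in the form of Corollary~\ref{cor:GCI}. The key observation is that the Radon--Nikodym derivative of two centered Gaussians with ordered precision matrices is, up to a positive constant, itself a centered Gaussian density, and hence symmetric and quasi-concave.

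More concretely, I would first compute directly from the explicit densities
\begin{equation*}
\frac{\de\gamma_1}{\de\gamma_2}(x) \;=\; \frac{Z_{\gamma_2}}{Z_{\gamma_1}}\exp\!\left(-\frac{1}{2}\langle x,(Q_1-Q_2)x\rangle\right).
\end{equation*}
The hypothesis $Q_1\succeq_{\PSD}Q_2$ says $Q_1-Q_2\in\cS^d_+$, so the right-hand side is a positive constant times a symmetric log-concave function, in particular symmetric and quasi-concave. Thus $\de\gamma_1/\de\gamma_2$ is (trivially) a finite product of symmetric quasi-concave factors, and $\gamma_1,\gamma_2$ are symmetric probability measures by hypothesis.

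I would then invoke Corollary~\ref{cor:GCI} with $\gamma=\gamma_2$ (the Gaussian reference measure) and $\nu=\gamma_1$. Its conclusion is $\gamma_1\preceq_{\con}\gamma_2$, which by the definition of $\preceq_{\con}$ in Subsection~\ref{subsec:setup} is exactly the inequality $\int f(x)\,\de\gamma_1(x)\geq\int f(x)\,\de\gamma_2(x)$ for every symmetric quasi-concave $f$.

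There is essentially no genuine obstacle; Corollary~\ref{cor:tilt-monotonicity} is a clean repackaging of Royen's inequality. The only mild care needed is the degenerate case where $Q_1-Q_2$ is singular, which poses no issue because the super-level sets of $\exp\!\bigl(-\tfrac{1}{2}\langle x,(Q_1-Q_2)x\rangle\bigr)$ remain convex and hence the factor is still quasi-concave; alternatively one can perturb $Q_1$ to $Q_1+\varepsilon\Id$ and pass to the limit $\varepsilon\downarrow 0$ using monotone/dominated convergence on both sides.
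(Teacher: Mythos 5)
Your proof is correct and takes the same route the paper points to: the paper's proof is a one-liner citing Anderson's lemma and remarking that the claim is "also immediate from Corollary~\ref{cor:GCI}," and you have simply written out that second observation in full (computing $\de\gamma_1/\de\gamma_2$ as a constant times a centered Gaussian density, hence symmetric quasi-concave, and applying Corollary~\ref{cor:GCI} with $\gamma=\gamma_2$, $\nu=\gamma_1$). Your remark about the singular case of $Q_1-Q_2$ is appropriate and causes no trouble since quasi-concavity only needs convex super-level sets, not strict decay.
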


\begin{proof}
    This was shown in \cite[Corollary 3]{anderson1955integral}, and is also immediate from Corollary~\ref{cor:GCI}.
\end{proof}

\subsection{The FKG-Gaussian Correlation Inequality}
\label{subsec:FKG-GCI}

Our main new tool, Theorem~\ref{thm:FKG-GCI} below, is a combination of the preceding inequalities.
It extends the Gaussian correlation inequality to a special class of distributions we call log-supermodular centered (LSMC) Gaussian mixtures.

\begin{definition}
\label{def:LSM-Gaussian-mixture}
    Let $\nu$ be a log-supermodular probability measure on $\bbR_+^d$, and for each $\xi\in\bbR_+^d$ let $\de\gamma_{\xi}(x)\propto e^{-\la x,F(\xi) x\ra/2}$ be a centered Gaussian measure on $\bbR^n$, for $F:\bbR^d_+\to\cS^n_+$ which is decreasing from $\preceq_{\ord}$ to $\preceq_{\PSD}$. 
    We define the \textbf{LSMC Gaussian mixture}
    \begin{equation}
    \label{eq:LSMC}
    \Gamma_{\nu,F}
    =
    \int
    \gamma_{\xi}
    ~\de \nu(\xi)
    \in\cP(\bbR^n).
    \end{equation}
\end{definition}

\begin{theorem}[FKG-Gaussian correlation inequality]
\label{thm:FKG-GCI}
    Let $\Gamma=\Gamma_{\nu,F}$ be an LSMC Gaussian mixture. Then Proposition~\ref{prop:GCI} and Corollary~\ref{cor:GCI} hold with $\Gamma$ in place of $\gamma$.
\end{theorem}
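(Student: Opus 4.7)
My plan is to combine the Gaussian correlation inequality applied fiberwise at each $\xi$ with the FKG inequality on the mixing measure $\nu$. Set
\[
A(\xi) \defeq \bbE^{\gamma_{\xi}}\Big[\prod_{j=1}^m f_j(x)\Big],\qquad B(\xi) \defeq \bbE^{\gamma_{\xi}}\Big[\prod_{k=m+1}^n f_k(x)\Big].
\]
The pointwise Gaussian correlation inequality (Proposition~\ref{prop:GCI}) gives $A(\xi)B(\xi) \leq \bbE^{\gamma_\xi}[\prod_{i=1}^n f_i]$ for every $\xi$, and integrating against $\nu$ yields $\int A B\,\de\nu \leq \bbE^{\Gamma}[\prod_i f_i]$. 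It therefore suffices to show $\int A\,\de\nu \cdot \int B\,\de\nu \leq \int AB\,\de\nu$, which is precisely the FKG inequality (Proposition~\ref{prop:FKG}) for the log-supermodular $\nu$, provided $A$ and $B$ are coordinate-wise monotone on $\bbR_+^d$ in the same direction.

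The monotonicity verification is the crux. Mimicking the proof of Proposition~\ref{prop:GCI}, I first use the layer-cake identity $f_i(x) = \int_0^\infty \mathbf{1}_{\{f_i\geq t\}}\,\de t$ together with Fubini to reduce to the case where each $f_i = \mathbf{1}_{K_i}$ is the indicator of a symmetric convex set; this reduction passes through each of the three expectations $\bbE^\Gamma[\prod_{j\leq m}f_j]$, $\bbE^\Gamma[\prod_{k>m}f_k]$, $\bbE^\Gamma[\prod_i f_i]$ simultaneously, and in the pointwise GCI step it holds $\gamma_\xi$-almost surely in $\xi$. Under this reduction $\prod_{j\leq m} f_j = \mathbf{1}_{\bigcap_{j\leq m}K_j}$ is the indicator of a symmetric convex set, hence symmetric quasi-concave. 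Now if $\xi \preceq_{\ord} \xi'$, the hypothesis on $F$ gives $F(\xi) \succeq_{\PSD} F(\xi')$, so Corollary~\ref{cor:tilt-monotonicity} (Anderson's inequality) yields $A(\xi) \geq A(\xi')$; the same holds for $B$. Both are thus coordinate-wise decreasing on $\bbR_+^d$, and FKG closes the argument for the $\Gamma$-analog of Proposition~\ref{prop:GCI}.

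The analog of Corollary~\ref{cor:GCI} follows formally: if $\de\nu'/\de\Gamma$ is proportional to a finite product $\prod_i g_i$ of symmetric quasi-concave functions, then for any symmetric quasi-concave $f$ one applies the just-established $\Gamma$-version of Proposition~\ref{prop:GCI} to the family $\{f, g_1, \dots, g_\ell\}$ to get $\int f\prod_i g_i\,\de\Gamma \geq \int f\,\de\Gamma \cdot \int \prod_i g_i\,\de\Gamma$, and dividing by the normalization gives $\int f\,\de\nu' \geq \int f\,\de\Gamma$, i.e.\ $\nu'\preceq_{\con}\Gamma$. The one conceptual obstacle is that the product $\prod_{j\leq m} f_j$ of symmetric quasi-concave functions need not itself be quasi-concave, so Anderson's inequality cannot be applied directly to $A(\xi)$; the Fubini reduction to indicators sidesteps this, since the intersection of symmetric convex sets is symmetric convex.
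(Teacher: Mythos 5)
Your proof is correct, and it takes a route that is organized differently from the paper's even though it relies on the same three ingredients: the fiberwise Gaussian correlation inequality, Anderson's inequality in the form of Corollary~\ref{cor:tilt-monotonicity}, and FKG on the log-supermodular mixing measure $\nu$. The paper first reduces (via Fubini) to two symmetric quasi-concave functions $f_1,f_2$, then writes the tilted measure $\Gamma^{(f_1)}$ as a mixture $\int\gamma_{\xi}^{(f_1)}\,\de\wt\nu(\xi)$ where $\de\wt\nu/\de\nu\propto\int f_1\,\de\gamma_{\xi}$; Anderson shows this Radon--Nikodym derivative is coordinate-wise decreasing, so FKG (via Corollary~\ref{cor:FKG-domination}) yields $\wt\nu\preceq_{\stoc}\nu$, and the proof is finished with an explicit monotone coupling $(\xi,\wt\xi)$. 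You instead apply the fiberwise GCI first to get $\int A B\,\de\nu\leq\bbE^{\Gamma}[\prod_i f_i]$, and then invoke FKG directly as the covariance inequality $\int A\,\de\nu\cdot\int B\,\de\nu\leq\int A B\,\de\nu$ for the coordinate-wise decreasing functions $A,B$. Your version is more symmetric in the two blocks of functions; the paper's tilting/coupling version is the same statement phrased so that it directly extends Corollary~\ref{cor:GCI} (and makes the stochastic-domination structure explicit, which is used again in Theorem~\ref{thm:domination-of-products}\ref{it:product-of-rho-domination}). You also correctly identified the one genuine subtlety: because a product of quasi-concave functions need not be quasi-concave, Anderson cannot be applied directly to $A(\xi)=\bbE^{\gamma_\xi}[\prod_{j\leq m}f_j]$, and your Fubini reduction to indicators of symmetric convex sets (so that $\prod_{j\leq m}\mathbf{1}_{K_j}=\mathbf{1}_{\cap_j K_j}$) is exactly what is needed; the paper makes the same reduction when it reduces to the $(m,n)=(1,2)$ case. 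One small presentational note: since $A,B$ are decreasing and Proposition~\ref{prop:FKG} is stated for increasing non-negative test functions, you should apply it to $1-A$ and $1-B$ (which are increasing and take values in $[0,1]$ in the indicator case), or simply note that positive association is invariant under jointly negating both functions.
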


\begin{proof}
    Let $\nu$ have density $F$.
    As in the proof of Proposition~\ref{prop:GCI}, it suffices to handle the case of two symmetric quasi-concave functions $f_1,f_2:\bbR^n\to\bbR_+$, i.e. to show $\int f_2(x) \de\Gamma^{(f_1)}(x)\geq \int f_2(x) \de\Gamma(x)$.
    To do so, we write
    \begin{equation}
    \label{eq:Gamma-f1}
    \Gamma^{(f_1)}
    =
    \int
    \gamma_{\xi}^{(f_1)}
    \de \wt\nu(\xi)
    \end{equation} 
    where it is easily checked $\de\wt\nu(\xi)/\de\nu(\xi)\propto \int f_1(x)\de\gamma_{\xi}(x)$. 
    Since $F$ is assumed decreasing, Corollary~\ref{cor:tilt-monotonicity} implies this Radon--Nikodym derivative is coordinate-wise decreasing in $\xi$.
    Therefore the FKG inequality in the form of Corollary~\ref{cor:FKG-domination} implies $\wt\nu\preceq_{\stoc} \nu$.
    In particular there exists a coupling $(\xi,\wt \xi)$ such that $\xi$ has law $\nu$, $\wt\xi$ has law $\wt\nu$, and $\wt\xi\preceq_{\ord}\xi$ almost surely. 
    Then for any symmetric quasi-concave $f_2:\bbR^n\to\bbR^+$, we have $\int f_2(x)\de \gamma_{\xi}\leq \int f_2(x)\de\gamma_{\wt\xi}$ almost surely by Corollary~\ref{cor:tilt-monotonicity}.
    Hence
    \begin{align}
    \label{eq:FKG-GCI-computation}
    \int f_2(x) \de\Gamma^{(f_1)}(x)
    &=
    \iint f_2(x) \de\gamma_{\wt\xi}^{(f_1)}(x) \de \wt\nu(\wt\xi)
    \geq 
    \iint f_2(x) \de\gamma_{\wt\xi}(x) \de \wt\nu(\wt\xi)
    \\
    \notag
    &\geq 
    \iint f_2(x) \de\gamma_{\xi}(x) \de \nu(\xi)
    =
    \int f_2(x) \de\Gamma(x)
    \qedhere
    \end{align}
\end{proof}

In the special case $d=1$, Theorem~\ref{thm:FKG-GCI} extends the Gaussian correlation inequality to base measures which are mixtures of centered Gaussians with $\preceq_{\PSD}$-totally ordered covariances.
Conversely, it is easy to show that symmetric convex sets can be negatively correlated relative to mixtures of two incomparable centered Gaussians, even on $\bbR^2$.

\begin{remark}
\label{rem:no-GCI}
    For the results in this paper, the full power of the Gaussian correlation inequality can be avoided.
    The reason is that we will apply comparison results such as Theorem~\ref{thm:domination-of-products} only to $1$-dimensional projections (for the tails of individual field values). Thus within its proof we only need to handle $f_2$ of the form $f_2(x)=1_{|\la v,x\ra|\leq C}$.
    This reduces to the Gaussian correlation inequality in the case $K_2=K_2(v,C)=\{x:|\la v,x\ra|\leq C\}$, which is much easier and shown in \cite{khatri1967certain,sidak1967rectangular}.
    Our subsequent arguments go through unchanged modulo redefining $\preceq_{\con}$ to involve only these \emph{one-dimensional} symmetric quasi-concave test functions.
    On the other hand, the full statement of Theorem~\ref{thm:FKG-GCI} (and Theorem~\ref{thm:domination-of-products} below) may be more powerful in other situations, for example in a closer examination of the maximum field value, and is in our opinion more elegant than the weaker variant outlined above.
\end{remark}

\subsection{Products of $1$-Dimensional Functions}
\label{subsec:products-1-D}

Next we specialize to the case when $\Gamma$ is a product of one-dimensional functions, each a mixture of centered Gaussian densities.
We index these by $e\in E$ since we later take $E=E(G_{\Lambda})$ (thus $E$ replaces $d$).
Given a family $(U_e)_{e\in E}:\bbR\to\bbR_+$ and a spanning set of vectors $(y_e)_{e\in E}$ in $\bbR^n$, define $\Gamma_{\vec V,\vec y}\in\cP(\bbR^n)$ and $F:\bbR^E_+\to\bbS^n_+$ as:
\begin{align}
\label{eq:product-of-functions}
    \de\Gamma_{\vec U,\vec y}(x)
    &\propto
    \exp\Big(-\sum_{e\in E} U_e(\la x,y_e\ra)\Big)\de x,
    \\
\label{eq:F-abstract-form}
    F(\xi)&=\sum_{e\in E} \xi_e^{-2} y_e^{\otimes 2}.
\end{align}
The following proposition, an easy (omitted) computation, gives the Gaussian mixture representation of such a product.

\begin{proposition}
\label{prop:product-of-functions}
    For $e\in E$, let $V_e:\bbR\to\bbR_+$ take the form \eqref{eq:V-GMM} with associated $\rho_e\in\cP(\bbR_+)$.
    With $\Gamma_{\vec V,\vec y},F$ as above, we have $\Gamma_{\vec V,\vec y}=\Gamma_{\nu,F}$ (recall \eqref{eq:LSMC}) for 
    \[
    \de\nu(\xi)\propto \Big(\det(F(\xi))^{-1/2}\prod_{e\in E} \xi_e^{-1} \Big) \prod_{e\in E}
    \de\rho_e(\xi_e).
    \]
\end{proposition}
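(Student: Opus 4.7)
The plan is to simply unfold the Gaussian mixture representation \eqref{eq:V-GMM} inside the product over $e \in E$ and then repackage the result as a $\xi$-mixture of (unnormalized) Gaussian densities.

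First I would substitute \eqref{eq:V-GMM} into \eqref{eq:product-of-functions}. For each $e\in E$,
\[
e^{-V_e(\la x,y_e\ra)} = \int_0^\infty \frac{1}{\kappa_e\sqrt{2\pi}} \exp\!\Big(-\frac{\la x,y_e\ra^2}{2\kappa_e^2}\Big)\,\de\rho_e(\kappa_e).
\]
Taking the product over $e$ and applying Fubini to move all the integrals in $\kappa_e$ to the outside gives
\[
\exp\!\Big(-\sum_{e\in E} V_e(\la x,y_e\ra)\Big)
= \int_{\bbR_+^E} \Big(\prod_{e\in E}\frac{1}{\kappa_e\sqrt{2\pi}}\Big)\,\exp\!\Big(-\tfrac12\sum_{e\in E}\kappa_e^{-2}\la x,y_e\ra^2\Big)\,\prod_{e\in E}\de\rho_e(\kappa_e).
\]

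Next I would recognize the inner quadratic form. Using $\la x,y_e\ra^2 = \la x, y_e^{\otimes 2} x\ra$ and the definition \eqref{eq:F-abstract-form}, the exponential equals $\exp(-\tfrac12\la x,F(\xi)x\ra)$ with $\xi=\kappa$. Since $(y_e)_{e\in E}$ spans $\bbR^n$, $F(\xi)\in\cS^n_{+}$ is strictly positive definite for all $\xi\in\bbR_+^E$, so the centered Gaussian density
\[
\gamma_\xi(x) = \frac{\det(F(\xi))^{1/2}}{(2\pi)^{n/2}}\,\exp\!\big(-\tfrac12\la x,F(\xi)x\ra\big)
\]
is well defined and we can solve for the exponential: $\exp(-\tfrac12\la x,F(\xi)x\ra) = (2\pi)^{n/2}\det(F(\xi))^{-1/2}\,\gamma_\xi(x)$.

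Substituting this back and collecting all $x$-independent factors, I obtain
\[
\exp\!\Big(-\sum_{e\in E} V_e(\la x,y_e\ra)\Big)
= \int_{\bbR_+^E} c\cdot \det(F(\xi))^{-1/2}\Big(\prod_{e\in E}\xi_e^{-1}\Big)\,\gamma_\xi(x)\,\prod_{e\in E}\de\rho_e(\xi_e),
\]
where $c = (2\pi)^{(n-|E|)/2}$ depends only on $n$ and $|E|$. Normalizing both sides as probability densities in $x$ shows that $\Gamma_{\vec V,\vec y}$ is the mixture $\int \gamma_\xi\,\de\nu(\xi)$ with $\nu$ precisely the measure stated in the proposition, completing the identification $\Gamma_{\vec V,\vec y}=\Gamma_{\nu,F}$.

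Nothing here is deep: the only thing to keep track of is the Gaussian normalization constant, which is what produces the $\det(F(\xi))^{-1/2}\prod_e\xi_e^{-1}$ factor in $\de\nu/\prod_e\de\rho_e$. Log-supermodularity of $\nu$ and monotonicity of $F$, which are what make this an LSMC mixture in the sense of Definition~\ref{def:LSM-Gaussian-mixture}, are separate (and also easy) verifications not required for the identity itself.
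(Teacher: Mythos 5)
Your computation is correct and is precisely the ``easy (omitted) computation'' the paper alludes to: substitute \eqref{eq:V-GMM} into the product, apply Tonelli, and absorb the Gaussian normalization $\det(F(\xi))^{1/2}(2\pi)^{-n/2}$ into the mixing measure, which is exactly where the $\det(F(\xi))^{-1/2}\prod_e\xi_e^{-1}$ factor comes from. You are also right that log-supermodularity of $\nu$ and monotonicity of $F$ are separate verifications handled later (in Theorem~\ref{thm:domination-of-products}\ref{it:product-is-LSM}), not part of this identity.
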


\begin{theorem}
\label{thm:domination-of-products}
    In the setting of Proposition~\ref{prop:product-of-functions}, let $U_e:\bbR\to\bbR_+$ be such that $U_e-V_e$ is symmetric and increasing on $\bbR_+$ for each $e\in E$.
    Then:
    \begin{enumerate}[label = {(\alph*)}]
    \item 
    \label{it:product-is-LSM}
    $\nu(\xi)$ from Proposition~\ref{prop:product-of-functions} is log-supermodular, i.e. $\Gamma_{\vec V,\vec y}$ is an LSMC Gaussian mixture.
    \item 
    \label{it:product-of-rho-domination}
    $\nu(\xi)\preceq_{\stoc}\prod_{e\in E} \rho_e(\xi_e)$.
    \item 
    \label{it:Gaussian-system-domination}
    $\Gamma_{\vec U,\vec y}
    \preceq_{\con} 
    \Gamma_{\vec V,\vec y}
    \preceq_{\con}
    \int
    \gamma_{\xi}
    \prod_{e\in E}\de\rho_e(\xi_e).$
    \end{enumerate}
\end{theorem}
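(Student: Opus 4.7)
For part~\ref{it:product-is-LSM}, since $\prod_e \xi_e^{-1} \de\rho_e(\xi_e)$ is a product of one-dimensional measures and hence trivially log-supermodular, it suffices to show $\det(F(\xi))^{-1/2}$ is log-supermodular in $\xi$, i.e., that $\log\det F(\xi)$ is submodular. I will verify this by a direct mixed-partial computation. Using $\partial_{\xi_e} F(\xi) = -2\xi_e^{-3} y_e y_e^\T$ and $\partial_{\xi_f} F^{-1} = -F^{-1}(\partial_{\xi_f} F) F^{-1}$, I obtain
\[
\partial_{\xi_e} \partial_{\xi_f} \log\det F(\xi) \;=\; -4\, \xi_e^{-3}\xi_f^{-3}\, (y_e^\T F(\xi)^{-1} y_f)^2 \;\leq\; 0
\]
for $e \neq f$, which is the desired submodularity. (This is a manifestation of the concavity of $\log\det$ on $\cS^n_+$.)

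For part~\ref{it:product-of-rho-domination}, set $\mu \defeq \prod_{e\in E} \rho_e$; this is a product measure, hence log-supermodular. Since
\[
\frac{\de\nu(\xi)}{\de\mu(\xi)} \;\propto\; \det(F(\xi))^{-1/2} \prod_{e\in E} \xi_e^{-1},
\]
Corollary~\ref{cor:FKG-domination} will yield $\nu \preceq_{\stoc} \mu$ once I check this ratio is coordinate-wise \emph{decreasing}. The logarithmic derivative in $\xi_f$ is $\xi_f^{-3}\, y_f^\T F(\xi)^{-1} y_f - \xi_f^{-1}$, so it suffices to show $y_f^\T F(\xi)^{-1} y_f \leq \xi_f^2$. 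This holds because $\xi_f^{-2} y_f y_f^\T \preceq_{\PSD} F(\xi)$ by the definition of $F$; conjugating by $F(\xi)^{-1/2}$ gives $I \succeq_{\PSD} \xi_f^{-2} F(\xi)^{-1/2} y_f y_f^\T F(\xi)^{-1/2}$, whose unique nonzero eigenvalue $\xi_f^{-2}\, y_f^\T F(\xi)^{-1} y_f$ must therefore be at most $1$.

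For part~\ref{it:Gaussian-system-domination}, the first inequality is an instance of Theorem~\ref{thm:FKG-GCI}. Writing $U_e = V_e + W_e$ with each $W_e$ symmetric and increasing on $\bbR_+$ by hypothesis, each $x \mapsto e^{-W_e(\la x, y_e\ra)}$ is symmetric quasi-concave (even functions decreasing on $\bbR_+$ have symmetric intervals as super-level sets, pulled back along a linear functional to symmetric convex slabs). Thus
\[
\frac{\de\Gamma_{\vec U,\vec y}(x)}{\de\Gamma_{\vec V,\vec y}(x)} \;\propto\; \prod_{e \in E} e^{-W_e(\la x, y_e\ra)}
\]
is a finite product of symmetric quasi-concave functions, and $\Gamma_{\vec V,\vec y}$ is an LSMC Gaussian mixture by part~\ref{it:product-is-LSM}, so Theorem~\ref{thm:FKG-GCI} yields $\Gamma_{\vec U,\vec y} \preceq_{\con} \Gamma_{\vec V,\vec y}$. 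For the second inequality, fix symmetric quasi-concave $f:\bbR^n\to\bbR_+$ and set $g(\xi) \defeq \int f(x)\, \de\gamma_\xi(x)$. Since $F(\xi)$ is coordinate-wise decreasing in $\xi$ (in the $\preceq_{\PSD}$ order), Corollary~\ref{cor:tilt-monotonicity} shows $g$ is coordinate-wise decreasing in $\xi$. Taking a monotone coupling witnessing $\nu \preceq_{\stoc} \mu$ from part~\ref{it:product-of-rho-domination}, we get $\int g\, \de\nu \geq \int g\, \de\mu$, which by Fubini is precisely $\int f\, \de\Gamma_{\vec V, \vec y} \geq \int f\, \de\bigl(\int \gamma_\xi \prod_{e\in E} \de\rho_e(\xi_e)\bigr)$.

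The main obstacle is the monotonicity check in part~\ref{it:product-of-rho-domination}: the density of $\nu$ relative to the product measure combines competing effects ($\det(F)^{-1/2}$ increases while $\prod_e \xi_e^{-1}$ decreases as any $\xi_e$ grows), and the PSD rank-one identity $y_f^\T F(\xi)^{-1} y_f \leq \xi_f^2$ is the precise cancellation making the net ratio monotone. The remaining parts are clean assemblies of the tools developed in Subsections~\ref{subsec:correlation-prelims} and~\ref{subsec:FKG-GCI}.
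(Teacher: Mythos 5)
Your proof is correct, and for parts~\ref{it:product-is-LSM} and~\ref{it:product-of-rho-domination} it takes a genuinely different route from the paper's. The paper funnels both parts through the single determinant inequality $\det(A+B+C)\det(A)\le\det(A+B)\det(A+C)$ for $A,B,C\in\cS^n_+$ (its equation \eqref{eq:det-ABC}), which it obtains as an instance of the Gaussian correlation inequality with the Gaussian densities $e^{-\la x,Ax\ra/2}$, $e^{-\la x,Bx\ra/2}$, $e^{-\la x,Cx\ra/2}$; log-supermodularity of $\det F(\xi)^{-1/2}$ and coordinate-wise monotonicity of the Radon--Nikodym derivative both then follow from linearity of $\xi\mapsto F(\xi)$ in the variables $\xi_e^{-2}$. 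You instead argue by direct calculus: for~\ref{it:product-is-LSM} you verify submodularity of $\log\det F$ via the mixed-partial identity $\partial_{\xi_e}\partial_{\xi_f}\log\det F(\xi)=-4\xi_e^{-3}\xi_f^{-3}\la y_e, F(\xi)^{-1}y_f\ra^2\le 0$ (which I have checked), and for~\ref{it:product-of-rho-domination} you compute the logarithmic $\xi_f$-derivative of the ratio and close with the clean rank-one bound $\la y_f, F(\xi)^{-1}y_f\ra\le \xi_f^2$, itself an immediate consequence of $F(\xi)\succeq_{\PSD}\xi_f^{-2}y_fy_f^{\T}$. Both computations are correct (using smoothness of $F$ on $\bbR_+^E$ and the fact that $\{y_e\}$ spans $\bbR^n$ so $F(\xi)\succ 0$). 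Your route is somewhat more elementary in that it does not invoke the Gaussian correlation inequality for these two structural facts, at the cost of explicit differentiation; the paper's route has the appeal of reducing everything to one short determinant inequality. Part~\ref{it:Gaussian-system-domination} is proved the same way in both: the first $\preceq_{\con}$ is Theorem~\ref{thm:FKG-GCI} applied with the quasi-concave reweighting $\prod_e e^{-W_e(\la x,y_e\ra)}$, and the second $\preceq_{\con}$ uses the monotone coupling guaranteed by~\ref{it:product-of-rho-domination} together with Corollary~\ref{cor:tilt-monotonicity} (your Fubini argument is exactly the paper's one-line observation that $\gamma_\xi\preceq_{\con}\gamma_{\xi'}$ whenever $\xi\succeq_{\ord}\xi'$).
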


\begin{proof}
    For \ref{it:product-is-LSM}, since $G$ is linear and coordinate-wise-to-PSD increasing in $\xi$, it suffices to show
    \begin{equation}
    \label{eq:det-ABC}
    \det(A+B+C)\det(A)\leq \det(A+B)\det(A+C),
    \quad
    \forall
    A,B,C\in\cS^n_+.
    \end{equation}
    This is the Gaussian correlation inequality with $\de\gamma(x)\propto e^{-\la x,Ax\ra/2}$ and $f_1(x)=e^{-\la x,Bx\ra/2}$, $f_2(x)=e^{-\la x,Cx\ra/2}$ . 

    To prove \ref{it:product-of-rho-domination} we use FKG with base measure $\prod_{e\in E} \rho_e(\xi_e)$; for this it suffices to show $\xi\mapsto\det(F(\xi))^{-1/2}\prod_{e\in E} \xi_e^{-1}$ is coordinate-wise decreasing.
    Indeed, monotonicity in each individual coordinate $\xi_i$ reduces to \eqref{eq:det-ABC}.
    
    The first part of claim~\ref{it:Gaussian-system-domination} is just Theorem~\ref{thm:FKG-GCI}, which is applicable thanks to claim~\ref{it:product-is-LSM}.
    The second part follows from claim~\ref{it:product-of-rho-domination} since $\gamma_{\xi}\preceq_{\con} \gamma_{\xi'}$ whenever $\xi\succeq_{\ord}\xi'$.
\end{proof}

Theorem~\ref{thm:domination-of-products}\ref{it:Gaussian-system-domination} gives a natural method to prove confinement of $\Gamma_{\vec U,\vec y}$. Namely if suitable $\vec V$ can be found, the right-hand side is a completely explicit mixture of centered Gaussians. This is how we will prove Theorem~\ref{thm:main}.

\begin{remark}
    In \eqref{eq:product-of-functions} and \eqref{eq:F-abstract-form}, one could more generally replace each $y_e^{\otimes 2}$ with a positive semi-definite matrix $M_e\in\cS^n_+$, such that $\sum_e M_e$ is strictly positive definite. 
    Then $\la x,y_e\ra$ becomes $\sqrt{\la x,M_e x\ra}$. Since each decreasing function $U_e-V_e$ applied to $\sqrt{\la x,M_e x\ra}$ is still quasi-concave on $\bbR^n$, Theorem~\ref{thm:domination-of-products} still holds.
    Using this, all our results (and their proofs) extend to vector-valued fields $\phi:V(G)\to \bbR^k$, with e.g. radial potentials $U_e(\|\phi(v)-\phi(v')\|)$.
\end{remark}

\subsection{Specialization to $\nabla\phi$ Surfaces}
\label{subsec:specialization-grad-phi}

The $\nabla\phi$ Gibbs measure \eqref{eq:model-def} with potential $V$ as in \eqref{eq:V-GMM} is a LSMC Gaussian mixture, and falls under the purview of Theorem~\ref{thm:domination-of-products}.
Here $\gamma_{G_{\Lambda},\xi}(\phi)$ is, as defined in \eqref{eq:GFF-def}, the wired Gaussian free field on $G_{\Lambda}$ with resistances $\xi_e^{2}$.
Thus $F(\xi)$ is the $\Lambda\times\Lambda$ matrix
\begin{equation}
\label{eq:F-graph}
F(\xi)_{v,v'}
=
\begin{cases}
\sum\limits_{v''\in \Lambda\cup \{z_{\Lambda}\}}
\xi_{v,v''}^{-2},\quad v=v'
\\
\quad\quad\quad\quad
-\xi_{v,v'}^{-2},\quad v\neq v'.
\end{cases}
\end{equation}
Correspondingly, $y_{\{v,z_{\Lambda}\}}$ has entry $1$ at $v$ with the remaining entries zero, while $y_{\{v,v'\}}$ has entry $1$ at $v$ and $-1$ at $v'$ (or vice-versa) when $v,v'\in\Lambda$.
The lemma below is then a direct specialization of Theorem~\ref{thm:domination-of-products}\ref{it:Gaussian-system-domination}.

\begin{lemma}
\label{lem:stiffening}
    Suppose $\vec U=V+\vec W$ for $V$ as in \eqref{eq:V-GMM} and symmetric quasi-concave $\vec W$. 
    Then
    \[
    \mu_{G_{\Lambda},\vec U}
    \preceq_{\con} 
    \mu_{G_{\Lambda},V}
    \preceq_{\con}
    \int\gamma_{\Lambda,\xi} ~\de\rho^{\otimes E(G_{\Lambda})}(\xi).
    \]
\end{lemma}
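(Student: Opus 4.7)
This is a direct specialization of Theorem~\ref{thm:domination-of-products}\ref{it:Gaussian-system-domination}, so the plan is simply to match the abstract set-up of Subsection~\ref{subsec:products-1-D} to the graph-theoretic data.

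First, after pinning $\phi(z_\Lambda) = 0$, the Gibbs density \eqref{eq:model-def} becomes a density on $\bbR^\Lambda$. For each edge $e = \{v, v'\} \in E(G_\Lambda)$, let $y_e \in \bbR^\Lambda$ have entry $+1$ at $v$ and $-1$ at $v'$ when both endpoints lie in $\Lambda$, and entry $+1$ at $v$ alone when $v' = z_\Lambda$. Then $\phi(v) - \phi(v') = \langle \phi, y_e \rangle$, so $\mu_{G_\Lambda, \vec U}$ takes the form $\Gamma_{\vec U, \vec y}$ in \eqref{eq:product-of-functions}, and $\{y_e\}_{e \in E(G_\Lambda)}$ spans $\bbR^\Lambda$ by connectedness of $G_\Lambda$.

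Next, specializing Proposition~\ref{prop:product-of-functions} with $V_e \equiv V$ and $\rho_e \equiv \rho$ identifies $\mu_{G_\Lambda, V}$ with an LSMC Gaussian mixture $\Gamma_{\nu, F}$, where $F(\xi) = \sum_e \xi_e^{-2} y_e y_e^\top$. This $F(\xi)$ agrees with the Dirichlet Laplacian \eqref{eq:F-graph}, so the associated centered Gaussian is precisely the wired free field $\gamma_{G_\Lambda, \xi}$ from \eqref{eq:GFF-def}. The required monotonicity of $F$ from $\preceq_{\ord}$ to $\preceq_{\PSD}$ is automatic since each coefficient $\xi_e^{-2}$ is a decreasing function of $\xi_e$.

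Finally, the decomposition $\vec U = V + \vec W$ with each $W_e$ even and $e^{-W_e}$ symmetric quasi-concave (equivalently, $W_e$ even and non-decreasing on $\bbR_+$) matches the hypothesis of Theorem~\ref{thm:domination-of-products}. Invoking part~\ref{it:Gaussian-system-domination} and translating back through the identifications above yields the two claimed $\preceq_{\con}$ inequalities, the final mixture being $\int \gamma_{G_\Lambda, \xi} \, \de\rho^{\otimes E(G_\Lambda)}(\xi)$. No substantive obstacle arises; the entire content of the lemma is this notational translation.
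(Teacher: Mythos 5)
Your proposal is correct and follows exactly the paper's own route: the paper states the lemma as a ``direct specialization of Theorem~\ref{thm:domination-of-products}\ref{it:Gaussian-system-domination}'' after setting up the vectors $y_e$ and the graph Laplacian $F(\xi)$ in the preceding paragraph, and you have filled in precisely those identifications. Your parenthetical clarification that the hypothesis ``symmetric quasi-concave $\vec W$'' should be read as $e^{-W_e}$ quasi-concave (equivalently, $W_e$ even and non-decreasing on $\bbR_+$, which is what Theorem~\ref{thm:domination-of-products} actually requires) is a useful disambiguation of the paper's wording but does not alter the substance of the argument.
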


We note that \cite[Section IV]{brascamp1975statistical} gives an explicit example with convex $U$ where increasing the interaction strengths (``stiffening the springs'') strictly increases the variance of $\phi(v)$.
Lemma~\ref{lem:stiffening} says this is impossible when the pre-perturbation potential takes the form \eqref{eq:V-GMM}.

\subsection{Tail Bounds for $1$-Dimensional Projections}
\label{subsec:tail-bounds-general}

We continue the study of linear $F$ as in \eqref{eq:F-abstract-form}, and show that tail decay for the $\xi_e$ implies confinement for the Gaussian mixture $\Gamma$.
Let $\Phi:\bbR_+\to\bbR_+$ be a convex increasing function with $\lim_{x\to\infty}\Phi(x)=\infty$, and define
\begin{equation}
\label{eq:Q-def}
    Q_{\xi}(y)=\bbE^{x\sim \gamma_{\xi}}[\la x,y\ra^2].
\end{equation}
Theorem~\ref{thm:convex-comparison} essentially shows that bounded $\Phi$-moments of each individual $\xi_e$ implies similar bounds for $Q_{\xi}(y)$, whenever $Q_1(y)$ is bounded.
Here and below we write $1$ in place of $\xi$ to indicate that $\xi_e=1$ for all $e$. 
Note that if $y$ corresponds to the value $\phi(v)$, boundedness of $Q_1(y)$ corresponds to transience of $G$.

\begin{theorem}
\label{thm:convex-comparison}
    For any $y\in\bbR^n$ and $\nu\in\cP(\bbR^d_+)$, we have
    $
    \bbE^{\xi\sim\nu}[\Phi(Q_{\xi}(y))]
    \leq 
    \sup_{e\in E}
    \bbE^{\xi\sim\nu}\Phi(\xi_e^2 Q_{1}(y))
    $.
\end{theorem}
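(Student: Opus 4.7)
The plan is to exploit the variational (Thomson-principle) representation of $Q_{\xi}(y)=y^{\T}F(\xi)^{-1}y$ and then combine it with the convexity of $\Phi$. Concretely, one checks via a standard Lagrangian calculation that
\[
Q_{\xi}(y)
=
\min\Big\{\sum_{e\in E}\xi_e^2 i_e^2\,:\,\sum_{e\in E} i_e\,y_e=y\Big\},
\]
the minimizer being $i_e=\xi_e^{-2}\la F(\xi)^{-1}y,y_e\ra$. This is the abstract form of the usual Dirichlet/Thomson duality; it does not require that the $y_e$ are linearly independent, only that they span (so that the minimization is feasible), which is already part of the setup in \eqref{eq:product-of-functions}.

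Let $i^{\star}\in\bbR^E$ be the optimal ``current'' at $\xi\equiv 1$, so that $Q_{1}(y)=\sum_e (i^{\star}_e)^2$ and $\sum_e i^{\star}_e y_e=y$. Since $i^{\star}$ is feasible at every $\xi$, the variational bound gives the pointwise inequality
\[
Q_{\xi}(y)\le \sum_{e\in E}\xi_e^2 (i^{\star}_e)^2
=
Q_{1}(y)\sum_{e\in E}p_e\,\xi_e^2,
\qquad
p_e\defeq\frac{(i^{\star}_e)^2}{Q_{1}(y)},
\]
where $(p_e)_{e\in E}$ is a probability vector on $E$.

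Now I would combine this with the hypotheses on $\Phi$. Since $\Phi$ is increasing,
\[
\Phi\big(Q_{\xi}(y)\big)\le \Phi\Big(\sum_{e\in E} p_e\,\xi_e^2 Q_{1}(y)\Big),
\]
and since $\Phi$ is convex, Jensen's inequality applied to the probability measure $(p_e)_{e\in E}$ yields
\[
\Phi\big(Q_{\xi}(y)\big)\le \sum_{e\in E} p_e\,\Phi\big(\xi_e^2 Q_{1}(y)\big).
\]
Taking $\bbE^{\xi\sim\nu}$ on both sides and bounding the convex combination by its supremum gives
\[
\bbE^{\xi\sim\nu}\big[\Phi(Q_{\xi}(y))\big]
\le \sum_{e\in E} p_e\,\bbE^{\xi\sim\nu}\big[\Phi(\xi_e^2 Q_{1}(y))\big]
\le \sup_{e\in E}\bbE^{\xi\sim\nu}\big[\Phi(\xi_e^2 Q_{1}(y))\big],
\]
which is the claimed inequality.

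The only nontrivial step is the Thomson-style identity for $Q_{\xi}(y)$, and even this is routine quadratic duality: no probabilistic or FKG input is required for Theorem~\ref{thm:convex-comparison} itself. The real content is the observation that a single ``reference'' feasible current $i^{\star}$, computed once at $\xi\equiv 1$, already certifies a bound with a $\xi$-independent convex combination of the $\xi_e^2 Q_{1}(y)$, which is exactly what convexity of $\Phi$ needs to collapse to the supremum over $e$.
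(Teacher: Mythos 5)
Your proof is correct and is essentially the paper's argument in dual form: the paper evaluates the Dirichlet-type variational formula for $Q_{\xi}(y)=\la y,F(\xi)^{-1}y\ra$ at the unweighted optimizer $w=F(1)^{-1}y$ and applies Cauchy--Schwarz, which produces exactly your pointwise bound $Q_{\xi}(y)\leq \sum_e \xi_e^2\la y,F(1)^{-1}M_eF(1)^{-1}y\ra = Q_1(y)\sum_e p_e\xi_e^2$ with the same weights $p_e$ (note $(i_e^{\star})^2=\la y,F(1)^{-1}M_eF(1)^{-1}y\ra$), after which the monotonicity-plus-Jensen step is identical. Indeed, the remark following the paper's proof explicitly describes your Thomson/flow formulation as the intuitive reading of the same argument.
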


\begin{proof}
    Note that $F$ is almost surely strictly positive definite since $\Phi$ tends to infinity.
    For all $\xi$,
    \[
    Q_{\xi}(y)
    =
    \la y,F(\xi)^{-1}y\ra
    =
    \min_{w\neq 0}\frac{\la y,w\ra^2}{\la w,F(\xi)w\ra}
    \leq 
    \frac{\la y,F(1)^{-1} y\ra^2}{\la y,F(1)^{-1} F(\xi) F(1)^{-1} y\ra}
    =
    \frac{\big(\sum_e\la y,F(1)^{-1} M_e F(1)^{-1} y\ra\big)^2}{\sum_e
    \xi_e^{-2}
    \la y,F(1)^{-1} M_e F(1)^{-1} y\ra}
    .
    \]
    By Cauchy--Schwarz, the last expression is at most $\sum_e \xi_e^2 \la y,F(1)^{-1} M_e F(1)^{-1} y\ra$.
    Since $\Phi$ is increasing and convex, 
    \[
    \bbE \Phi(Q_{\xi}(y))
    \leq 
    \bbE \Phi\lt(
    \sum_e \xi_e^2 \la y,F(1)^{-1} M_e F(1)^{-1} y\ra
    \rt)
    \leq 
    \sup_e 
    \bbE\Phi(\xi_e^2 \la y,F(1)^{-1} y\ra).
    \qedhere
    \]
\end{proof}

The argument above is similar to that of \cite{brydges1990grad} for the case $\Phi(x)=x^{\alpha}$. Actually for $\nabla\phi$ models it can be formulated quite intuitively: one fixes the minimal-energy $v\to z_{\Lambda}$ unit flow for the \textbf{unweighted} graph $G_{\Lambda}$, and uses Jensen's inequality to bound the $\Phi$-moment of its $\xi_e^{-2}$-\textbf{weighted} energy (an upper bound for effective resistance).

The next propositions characterize polynomial and stretched exponential decay in terms of test functions, which will be convenient for applying Theorem~\ref{thm:convex-comparison}. We omit the easy proofs.

\begin{proposition}
\label{prop:power-tail-convex}
    For any $\alpha>2$ the following are equivalent (up to the values $C_1,C_2$) for $\mu\in\cP(\bbR_+)$:
    \begin{enumerate}[label = {(\alph*)}]
        \item $\bbP^{x\sim\mu}[x\geq t]\leq C_1 t^{-\frac{\alpha}{2}}$.
        \item $\bbE^{x\sim\mu}[(x-t)_+]\leq C_2 t^{1-\frac{\alpha}{2}}$ for all $t\geq 0$.
    \end{enumerate}
    Moreover in either case, if $Z$ is an independent standard Gaussian, then $\bbP[Z\sqrt{x}\geq t]\leq C_3 t^{-\alpha}$.
\end{proposition}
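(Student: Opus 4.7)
The plan is to establish the equivalence (a)$\iff$(b) by the layer-cake identity in one direction and Markov's inequality in the other, then handle the Gaussian tilt by splitting into two regimes.

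First I would show (a)$\implies$(b). The layer-cake formula gives
\[
\bbE^{x\sim\mu}[(x-t)_+] = \int_t^\infty \bbP^{x\sim\mu}[x\geq s]\,ds \leq C_1\int_t^\infty s^{-\alpha/2}\,ds = \frac{C_1}{\alpha/2-1}\, t^{1-\alpha/2},
\]
where finiteness on the right uses $\alpha>2$. So one may take $C_2=C_1/(\alpha/2-1)$. Conversely, for (b)$\implies$(a), apply Markov's inequality at the shifted threshold:
\[
\bbP^{x\sim\mu}[x\geq 2t] \leq \bbP^{x\sim\mu}[(x-t)_+\geq t] \leq \frac{\bbE^{x\sim\mu}[(x-t)_+]}{t}\leq C_2\, t^{-\alpha/2},
\]
and rescaling $t \mapsto t/2$ gives the bound (a) with $C_1=2^{\alpha/2}C_2$.

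For the ``moreover'' statement, by independence and Fubini
\[
\bbP[Z\sqrt{x}\geq t] = \int_0^\infty \varphi(z)\, \bbP^{x\sim\mu}\!\big[x\geq (t/z)^2\big]\,dz,
\]
where $\varphi$ is the standard Gaussian density on $[0,\infty)$. Plugging in (a) yields the integrand bound $\min\!\big(1,\, C_1 z^{\alpha}/t^{\alpha}\big)$. Splitting at $z_*=t/C_1^{1/\alpha}$,
\[
\bbP[Z\sqrt{x}\geq t] \leq \frac{C_1}{t^{\alpha}}\int_0^{z_*}\varphi(z)z^\alpha\,dz + \int_{z_*}^\infty \varphi(z)\,dz \leq \frac{C_1\,\bbE[|Z|^\alpha]}{t^\alpha} + \bbP[Z\geq z_*].
\]
The first term is $O(t^{-\alpha})$ and the second decays super-polynomially in $t$, so the sum is at most $C_3 t^{-\alpha}$ for some $C_3=C_3(\alpha,C_1)$ (and the bound is trivial for bounded $t$).

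There is no real obstacle here: the proof is routine analysis. The one subtle point worth flagging in the writeup is that $\alpha>2$ is used precisely to make the layer-cake integral in (a)$\implies$(b) converge; a weaker version with $\alpha\in(0,2]$ would require replacing $(x-t)_+$ by a different test function, but that regime is irrelevant for the applications in this paper.
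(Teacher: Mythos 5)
Your proof is correct, and since the paper explicitly omits the proof of this proposition as ``easy,'' your argument (layer-cake plus Markov for the equivalence, and conditioning on $Z$ with a split at $z_*\asymp t$ for the Gaussian tilt) is exactly the standard one the authors intend. No gaps; the handling of small $t$ and the role of $\alpha>2$ are both correctly flagged.
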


\begin{proposition}
\label{prop:stretched-tail-convex}
    Fix $\beta\in (0,2)$ and $\wt\beta=\frac{\beta}{2-\beta}$. The following are equivalent (up to $C_1,C_2$) for $\mu\in\cP(\bbR_+)$:
    \begin{enumerate}[label = {(\alph*)}]
        \item $\bbP^{x\sim\mu}[x\geq t]\leq C_1 e^{-t^{\wt\beta}/C_1}$ for all $t>0$.
        \item $\bbE^{x\sim\mu}[e^{x^{\wt\beta}/C_2}]\leq C_2$.
    \end{enumerate}
    Moreover in either case, if $Z$ is an independent standard Gaussian, then $\bbP[Z\sqrt{x}\geq t]\leq C_3 e^{-t^{\beta}/C_3}$.
\end{proposition}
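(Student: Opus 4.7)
The plan is to handle the equivalence of (a) and (b) by standard Orlicz/Markov arguments, and to deduce the Gaussian tail claim by conditioning on $x$ and splitting the resulting expectation into a small-$x$ regime (where Gaussian concentration dominates) and a large-$x$ regime (controlled by the assumed stretched exponential tail on $x$).

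For (b) $\Rightarrow$ (a) the direction is immediate from Markov's inequality applied to $e^{x^{\wt\beta}/C_2}$, yielding $\bbP[x\geq t]\leq e^{-t^{\wt\beta}/C_2}\bbE[e^{x^{\wt\beta}/C_2}]\leq C_2 e^{-t^{\wt\beta}/C_2}$. For (a) $\Rightarrow$ (b), I would use the layer-cake identity
\begin{equation*}
\bbE[e^{x^{\wt\beta}/C}]=1+\int_0^\infty (\wt\beta/C)\,t^{\wt\beta-1}\,e^{t^{\wt\beta}/C}\,\bbP[x\geq t]\,dt
\end{equation*}
and substitute (a) into the integrand. Taking $C>C_1$ makes the integrand dominated by $(\wt\beta C_1/C)\,t^{\wt\beta-1}\,e^{-\bigl(\frac{1}{C_1}-\frac{1}{C}\bigr)t^{\wt\beta}}$, which integrates to a finite constant depending only on $C_1$ (and the chosen $C$).

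For the final claim, condition on $x$ and apply the standard Gaussian tail bound $\bbP[Z\geq u]\leq e^{-u^2/2}$ to get $\bbP[Z\sqrt{x}\geq t]\leq \bbE[e^{-t^2/(2x)}]$. I would split this expectation at a threshold $R=R(t)$:
\begin{equation*}
\bbE[e^{-t^2/(2x)}]\leq e^{-t^2/(2R)}+\bbP[x>R]\leq e^{-t^2/(2R)}+C_1 e^{-R^{\wt\beta}/C_1}.
\end{equation*}
Balancing the two exponents requires $R^{1+\wt\beta}\asymp t^2$, i.e.\ $R\asymp t^{2/(1+\wt\beta)}$, in which case the common exponent has order $t^{2\wt\beta/(1+\wt\beta)}$. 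Using $1+\wt\beta=2/(2-\beta)$, this simplifies precisely to $t^\beta$, as desired.

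The only substantive point — rather than a genuine obstacle — is checking that this optimization produces exactly the exponent $\beta$; this is essentially forced by the definition $\wt\beta=\beta/(2-\beta)$, which is the Legendre-dual exponent for the Laplace-type transform $\bbE[e^{-t^2/(2x)}]$ of a stretched-exponential density of index $\wt\beta$. Beyond this algebraic check, the routine bookkeeping consists of absorbing sub-polynomial prefactors (the weight $t^{\wt\beta-1}$ above and the multiplicative constants in the tail bounds) into slightly enlarged constants $C_2,C_3$, which is harmless since stretched exponentials dominate all polynomial factors.
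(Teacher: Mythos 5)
Your proof is correct. The paper explicitly omits the proof of this proposition as routine, and your argument (Markov plus layer-cake for the equivalence, then conditioning on $x$ and optimizing the threshold $R\asymp t^{2/(1+\wt\beta)}$ for the Gaussian claim) is exactly the standard argument the authors had in mind; the algebraic identity $2\wt\beta/(1+\wt\beta)=\beta$ that you verify is indeed the whole point of the definition of $\wt\beta$.
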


\section{Proofs of Main Results}
\label{sec:main-proofs}

In Subsection~\ref{subsec:explicit-mixture-reps} we explicitly construct $V$ as in  \eqref{eq:V-GMM} for various classes of monotone functions.
We then prove the main results of the paper using the ideas of the previous section, especially Theorems~\ref{thm:domination-of-products} and \ref{thm:convex-comparison}.

\subsection{Gaussian Mixture Representations of Monotone Functions}
\label{subsec:explicit-mixture-reps}

Here we give the crucial constructions of Gaussian mixtures $V$ for monotone $U$. 
Although our main results consider only $\alpha\in \{\eps\}\cup (2,\infty)$, the construction below works for all $\alpha,\eps>0$.
Note that $V$ depends only on $(\alpha,\eps)$ or $(\beta,\eps)$.

\begin{proposition}
\label{prop:monotone-function-representation}
    For any $\alpha,\eps>0$ there exists $V=V_{\alpha,\eps}$ of the form \eqref{eq:V-GMM} such that $W=U-V$ is quasi-concave for all $(\alpha,\eps)$-monotone $U$.
    Further, the measure $\rho=\rho_{\alpha,\eps}$ may be taken to have sub-$\alpha$ tails.
\end{proposition}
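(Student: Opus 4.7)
The plan is to take $V = V_{\alpha,\eps}$ to be the Gaussian mixture associated to the truncated Pareto measure
\[
\de\rho(\kappa) = \alpha\, \kappa_0^{\alpha}\, \kappa^{-\alpha-1}\, \one\{\kappa \geq \kappa_0\}\, \de\kappa, \qquad \kappa_0 \defeq 1/\sqrt{\eps}.
\]
This is a probability measure on $(0,\infty)$ with $\int \de\rho(\kappa)/\kappa < \infty$, so \eqref{eq:V-GMM} defines a smooth even function $V$. The sub-$\alpha$ tail claim is immediate since $\bbP^{\rho}[\kappa \geq t] = (t/\kappa_0)^{-\alpha}$ for $t \geq \kappa_0$. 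The remaining task is to show that for every $(\alpha,\eps)$-monotone $U$, the difference $W = U - V$ is even and non-decreasing on $\bbR_+$ -- equivalently, that $e^{-W}$ is symmetric and quasi-concave, which is the form in which Lemma~\ref{lem:stiffening} uses this decomposition. Because $V$ is smooth and $U$ is increasing on $\bbR_+$, for $x_1 < x_2$ we have $W(x_2) - W(x_1) \geq \int_{x_1}^{x_2} (U'(t) - V'(t))\, \de t$, so the conclusion reduces to the pointwise derivative bound
\[
V'(x) \leq \min\big(\eps x,\, (\alpha+1)/x\big), \qquad x > 0.
\]

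To establish this bound, I substitute $\lambda = \kappa^{-2}$ in the Gaussian mixture representation, which gives
\[
e^{-V(x)} \propto \int_0^{\eps} \lambda^{(\alpha-1)/2} e^{-\lambda x^2/2}\, \de\lambda,
\]
and differentiating in $x$ yields
\[
\frac{V'(x)}{x} = \bbE[\lambda],
\]
where $\lambda$ has posterior density proportional to $\lambda^{(\alpha-1)/2} e^{-\lambda x^2/2} \one\{0 < \lambda \leq \eps\}$. Since this distribution is supported in $(0,\eps]$, one reads off $V'(x) \leq \eps x$ immediately. For the companion bound, rescale to $t = \lambda x^2$: the law of $t$ is then a Gamma$((\alpha+1)/2, 1/2)$ distribution truncated to $[0, \eps x^2]$, whose truncated mean is at most the untruncated mean $\alpha + 1$, giving $V'(x)\cdot x = \bbE[t] \leq \alpha + 1$.

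The only step I expect to require any care is the comparison of a truncated mean to the untruncated mean, $\bbE[T \mid T \leq M] \leq \bbE[T]$ for non-negative $T$, which is a one-line conditioning argument. Once this is in hand, the two bounds combine into $V'(x) \leq \min(\eps x, (\alpha+1)/x) \leq U'(x)$ a.e.\ on $\bbR_+$, and the required quasi-concavity of $e^{-W}$ follows by integration and the evenness of $U$ and $V$.
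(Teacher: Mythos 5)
Your proof is correct and follows essentially the same route as the paper: the same truncated power-law mixing measure $\de\rho(\kappa)\propto \kappa^{-\alpha-1}\one\{\kappa\geq\kappa_0\}$ with cutoff at scale $\eps^{-1/2}$, and the same two derivative bounds $V'(x)\leq\eps x$ and $V'(x)\leq(\alpha+1)/x$. Your substitution $\lambda=\kappa^{-2}$ merely recasts the paper's comparison (removing the cutoff only increases $\bbE[\kappa^{-2}]$ under the posterior, computed via the scaling identity for the untruncated integral) as the equivalent statement that truncating a Gamma$((\alpha+1)/2,1/2)$ variable from above can only decrease its mean $\alpha+1$.
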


\begin{proof}
    It suffices to construct $V$ of the form \eqref{eq:V-GMM} with $V'(x)\leq \min\big(\eps x,\frac{\alpha+1}{x}\big)$. 
    We take $\rho$ to have density proportional to $\kappa^{-\alpha-1}\cdot 1_{\kappa\geq A}$ with $A=1+\eps^{-1/2}$.
    Since the ratio between integrands below is decreasing in $\kappa$, 
    \begin{align*}
    V'(x)
    &=
    \frac{x\int_A^{\infty} 
    \kappa^{-\alpha-4}
    e^{-x^2/2\kappa^2}
    \de \kappa}
    {
    \int_A^{\infty} 
    \kappa^{-\alpha-2}
    e^{-x^2/2\kappa^2}
    \de \kappa}
    \leq 
    \frac{x}{A^2}
    \leq
    \eps x,
    \end{align*}
    proving the first desired upper bound.
    By similar monotonicity, another upper bound can be obtained by setting $A=0$ in the integrals above. 
    However with 
    \[
    f(x)=\int_0^{\infty} 
    \kappa^{-\alpha-2}
    e^{-x^2/2\kappa^2}
    \de\kappa
    \]
    it is easy to see by substitution that $f(x)=f(1)x^{-\alpha-1}$ for all $x>0$.
    We conclude that  
    \[
    V'(x)
    =
    \frac{x\int_A^{\infty} 
    \kappa^{-\alpha-4}
    e^{-x^2/2\kappa^2}
    \de \kappa}
    {
    \int_A^{\infty} 
    \kappa^{-\alpha-2}
    e^{-x^2/2\kappa^2}
    \de \kappa}
    \leq
    \frac{x\int_0^{\infty} 
    \kappa^{-\alpha-4}
    e^{-x^2/2\kappa^2}\de\kappa}
    {\int_0^{\infty} 
    \kappa^{-\alpha-2}
    e^{-x^2/2\kappa^2}\de\kappa}
    =
    -\frac{f'(x)}{f(x)}
    =
    \frac{\alpha+1}{x}.
    \qedhere
    \]
\end{proof}

We next turn to the polynomially monotone case.
Given $K,\beta,\eps>0$, let $\wh V_{\beta,K}(x)=(1+(x/K)^2)^{\beta/2}$.

\begin{proposition}
\label{prop:V-beta-K-GMM}
    For $\beta\in (0,2)$, $\wh V_{\beta,K}$ takes the form \eqref{eq:V-GMM}, and the associated $\wt\rho$ has stretched sub-$\frac{2\beta}{2-\beta}$ tail.
\end{proposition}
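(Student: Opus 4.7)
The plan is to construct $\wt\rho$ by subordinating a centered Gaussian against the positive one-sided $\beta/2$-stable distribution. Since $\beta/2\in (0,1)$, there exists an absolutely continuous probability measure $\sigma_{\beta/2}$ on $(0,\infty)$ with density $\pi_{\beta/2}$ and Laplace transform $\int_0^\infty e^{-ts}\,d\sigma_{\beta/2}(s) = e^{-t^{\beta/2}}$ for all $t\geq 0$. Setting $t = 1 + x^2/K^2$ yields
\[
e^{-\wh V_{\beta,K}(x)}
=
\int_0^\infty e^{-s}\, e^{-sx^2/K^2}\, d\sigma_{\beta/2}(s),
\]
which is already a positive mixture of centered Gaussian densities in $x$. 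The substitution $\kappa = K/\sqrt{2s}$ rewrites this integral in the form $\int_0^\infty \frac{e^{-x^2/(2\kappa^2)}}{\kappa\sqrt{2\pi}}\,d\wt\rho(\kappa)$ for an absolutely continuous $\wt\rho$ on $(0,\infty)$; after normalizing $\wt\rho$ to a probability measure (which only changes $\wh V_{\beta,K}$ by an additive constant, absorbed into the definition) one obtains the form \eqref{eq:V-GMM}.

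For the tail bound, large $\kappa$ corresponds to $s\downarrow 0$. By de Bruijn's Tauberian theorem applied to the identity $\int_0^\infty e^{-ts}\,\pi_{\beta/2}(s)\,ds = e^{-t^{\beta/2}}$, the large-$t$ asymptotic $-\log\!\lt(e^{-t^{\beta/2}}\rt) = t^{\beta/2}$ translates into
\[
-\log \pi_{\beta/2}(s)\;\sim\;c_\beta\, s^{-\beta/(2-\beta)}
\qquad \text{as } s\downarrow 0,
\]
for an explicit constant $c_\beta > 0$. Plugging $s = K^2/(2\kappa^2)$ converts the stretched exponential factor into $\exp\!\lt(-c_\beta' \kappa^{2\beta/(2-\beta)}\rt)$; combining with the Jacobian $|ds/d\kappa| = K^2/\kappa^3$ and the harmless factor $e^{-s}\leq 1$ shows that the density of $\wt\rho$ decays as $\exp\!\lt(-c' \kappa^{2\beta/(2-\beta)}\rt)$ for large $\kappa$, which is precisely stretched sub-$\tfrac{2\beta}{2-\beta}$ tail. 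The integrability condition $\int d\wt\rho(\kappa)/\kappa <\infty$ follows easily from the power-law tail $\pi_{\beta/2}(s)\sim C s^{-1-\beta/2}$ at large $s$ together with the factor $e^{-s}$, which force the density of $\wt\rho$ to vanish rapidly as $\kappa\downarrow 0$.

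The main technical ingredient is the small-$s$ asymptotic for $\pi_{\beta/2}$; this follows from de Bruijn's theorem but could alternatively be established by direct saddle-point analysis on a Laplace-inversion contour integral representation of $\pi_{\beta/2}$. Everything else is a routine change of variables. An alternative route to mere existence of $\wt\rho$ is to observe that $y\mapsto (1+y/K^2)^{\beta/2}$ is a Bernstein function for $\beta/2 \in (0,1]$, so $y\mapsto e^{-(1+y/K^2)^{\beta/2}}$ is completely monotone in $y = x^2$, and Bernstein's theorem yields the desired Gaussian-mixture representation; however, this route does not directly quantify the tail, so I prefer the explicit subordination argument above.
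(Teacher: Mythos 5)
Your argument is correct and is in substance the same as the paper's: the paper cites \cite{ye2019models} (which in turn relies on \cite{hawkes1971lower}) for precisely this one-sided $\beta/2$-stable subordination of $e^{-(1+(x/K)^2)^{\beta/2}}$ together with the small-argument asymptotics $-\log\pi_{\beta/2}(s)\asymp s^{-\beta/(2-\beta)}$, whereas you reconstruct both ingredients directly. The only minor caveat is that de Bruijn's Tauberian theorem strictly gives the asymptotic for $-\log\sigma_{\beta/2}([0,s])$ rather than for the density itself, but this integrated version already suffices for bounding $\wt\rho([\kappa,\infty))$, and the pointwise density asymptotic is classical in any case.
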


\begin{proof}
    We set $K=1$ by rescaling.
    An explicit formula for $f(\kappa)\equiv\de\wt\rho(\kappa)/\de\kappa$ is essentially given in \cite{ye2019models}. 
    In particular \cite[Proposition A.8]{ye2019models} gives an asymptotic description of the density as $\kappa\to\infty$, where his $\alpha\in (0,1)$ is our $\beta/2$, and we set his $\beta$ to $1$.
    (Much of \cite{ye2019models} assumes $\alpha<1/2$, but the cited result just uses \cite[Lemma 1.1]{hawkes1971lower} which holds for all $\alpha\in (0,1)$.)
    Dropping the factor $e^{-\kappa}<1$ written in \cite[Equation (1.6)]{ye2019models} (note $\kappa$ is parametrized differently from \ref{eq:V-GMM}), the desired bound easily follows from \cite[Proposition A.8]{ye2019models}.
\end{proof}

Having seen that $e^{-\wh V_{\beta,K}}$ has a Gaussian mixture representation, we now lower bound its derivative.

\begin{proposition}
\label{prop:power-growth-GMM}
    For any $\beta\in (0,2)$ and $\eps>0$, and $K$ sufficiently large depending on $(\beta,\eps)$, we have $\wh V_{\beta,K}'(x)\leq \eps\,\min(x,x^{\beta-1})$ for all $x>0$. 
\end{proposition}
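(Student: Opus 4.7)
The plan is to compute $\wh V_{\beta,K}'$ directly and then verify the desired inequality by splitting into two regimes, $x\leq K$ and $x\geq K$. A direct computation gives
\[
\wh V_{\beta,K}'(x)
=
\frac{\beta x}{K^2}\,\bigl(1+(x/K)^2\bigr)^{\beta/2-1}.
\]
Since $\beta/2-1<0$, the factor $(1+(x/K)^2)^{\beta/2-1}$ is at most $1$ and at most $(x/K)^{\beta-2}$ (for the second, use $1+(x/K)^2\geq (x/K)^2$ and flip the inequality under a negative exponent).

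The first bound, combined with the first regime $x\leq K$, yields $\wh V_{\beta,K}'(x)\leq \beta x/K^2$. This is at most $\eps x$ as soon as $K^2\geq \beta/\eps$. The second bound yields, for $x\geq K$,
\[
\wh V_{\beta,K}'(x)
\leq
\frac{\beta x}{K^2}\cdot (x/K)^{\beta-2}
=
\beta K^{-\beta} x^{\beta-1},
\]
which is at most $\eps x^{\beta-1}$ provided $K^\beta \geq \beta/\eps$. Choosing $K\geq \max\bigl((\beta/\eps)^{1/2},(\beta/\eps)^{1/\beta}\bigr)$ therefore gives $\wh V_{\beta,K}'(x)\leq \eps x$ for $x\leq K$ and $\wh V_{\beta,K}'(x)\leq \eps x^{\beta-1}$ for $x\geq K$.

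To conclude, note that since $\beta-1<1$, we have $\min(x,x^{\beta-1})=x$ for $x\leq 1$ and $\min(x,x^{\beta-1})=x^{\beta-1}$ for $x\geq 1$. For $K\geq 1$ the bound $\wh V_{\beta,K}'(x)\leq \eps x$ on $x\leq K$ already covers the range $x\leq 1$. The only remaining subcase is $1\leq x\leq K$, where we must check $\wh V_{\beta,K}'(x)\leq \eps x^{\beta-1}$; this follows from $\wh V_{\beta,K}'(x)\leq \beta x/K^2$ together with $x^{2-\beta}\leq K^{2-\beta}$, yielding $\beta x/K^2\leq \beta K^{-\beta}x^{\beta-1}\leq \eps x^{\beta-1}$ under the same condition $K^\beta\geq \beta/\eps$. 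All cases are thus handled, and there is no real obstacle beyond bookkeeping the two cases and the breakpoint $x=1$.
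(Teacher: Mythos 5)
Your proof is correct and follows essentially the same route as the paper: compute $\wh V_{\beta,K}'(x)=\frac{\beta x}{K^2}(1+(x/K)^2)^{\beta/2-1}$, bound the last factor by $1$ to get the $\eps x$ estimate and by $(x/K)^{\beta-2}$ to get the $\eps x^{\beta-1}$ estimate. The only difference is presentational: both of your estimates actually hold for all $x>0$ simultaneously, so the case split at $x=K$ and the extra argument for $1\leq x\leq K$ are unnecessary bookkeeping.
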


\begin{proof}
    Since $\wh V_{\beta,K}'(x)=\frac{\beta x}{K^2}\lt(1+(x/K)^2\rt)^{\frac{\beta}{2}-1}$ the upper bound $\eps x$ is clear. 
    For the latter bound, we write
    \[
    \wh V_{\beta,K}'(x)
    =
    \frac{\beta}{K^{\beta}x^{1-\beta}}
    \cdot 
    \lt(\frac{x^2}{x^2+K^2}\rt)^{1-\frac{\beta}{2}}
    \leq 
    \frac{\beta}{K^{\beta}x^{1-\beta}}.
    \qedhere
    \]
\end{proof}

\subsection{Proofs for $\nabla\phi$ Models with Wired Boundary Conditions}
\label{subsec:localization-wired-proofs}

Below we always fix $v\in \Lambda\subseteq G$. 
We will repeatedly use the fact that if $\mu_i\preceq_{\con}\nu_i$ for $i\geq 1$, then all tail bounds for $\phi(v)$ (tightness, polynomial tails, stretched exponential tails) under $\nu_i$ directly transfer to $\mu_i$.
Since the functions $V$ constructed in the previous subsection do not depend on $U$, the measures $\rho_e=\rho$ will not depend on $e$ below.

\begin{proof}[Proof of Theorem~\ref{thm:main}]
Let $\rho$ be as in Proposition~\ref{prop:monotone-function-representation}.
In light of Lemma~\ref{lem:stiffening}, it remains to prove tightness of $\phi(v)$ under the mixture of Gaussian free fields $\int\gamma_{\Lambda,\xi} \,\de\rho^{\otimes E(G_{\Lambda})}(\xi)$.
Fix $p<1$ such that $p$-bond percolation on $G$ almost surely has a transient infinite cluster, and let $C_{\rho,p}<\infty$ be such that $\rho([0,C_{\rho,p}])\geq p$.

Let $\wh\xi\in (0,\infty)^{E(G)}$ have law $\rho^{\otimes E(G)}$ and let $\xi=\wh\xi|_{E(G_{\Lambda})}\sim\rho^{\otimes E(G_{\Lambda})}$ be the restriction to $E(G_{\Lambda})\subseteq E(G)$. 
Let $H_{\wh\xi}\subseteq G$ be the subgraph consisting of edges with resistance at most $C_{\rho,p}^2$:
\[
E(H_{\wh\xi})\equiv\{e\in E(G)~:~\wh\xi_e\leq C_{\rho,p}\}.
\]
By definition $H_{\wh\xi}$ has a transient infinite cluster $\cC$ almost surely.
Let $w\in\cC$ be a closest point to $v=(0,0)$; then $R_{H_{\wh\xi}}(w\lrarrow \infty)<\infty$ by transience of $\cC$.
Since $R_{G,\wh\xi}(v\lrarrow w)$ is trivially finite, we have
\begin{equation}
\label{eq:resistance-computation}
R_{G,\wh\xi}(v\lrarrow \infty)
\leq 
R_{G,\wh\xi}(v\lrarrow w)
+
R_{G,\wh\xi}(w\lrarrow \infty)
\leq 
R_{G,\wh\xi}(v\lrarrow w)
+
C_{\rho,p}^{2}
R_{H_{\wh\xi}}(w\lrarrow \infty)
<
\infty.
\end{equation}
Recall from \eqref{eq:Rayleigh-monotonicity} that $R_{G_{\Lambda},\xi}(v\lrarrow z_{\Lambda})\leq R_{G,\wh\xi}(v\lrarrow \infty)$ almost surely.
In particular as $\Lambda$ varies $R_{G_{\Lambda},\xi}(v\lrarrow z_{\Lambda})$ is uniformly stochastically dominated by the almost surely finite $R_{G,\wh\xi}(v\lrarrow \infty)$. Hence the random variables $R_{G_{\Lambda},\xi}(v\lrarrow z_{\Lambda})$ are tight.
By \eqref{eq:variance-equals-resistance} the $\int\gamma_{\Lambda,\xi} \de\rho^{\otimes E(G_{\Lambda})}(\xi)$-law of $\phi(v)$ is a mixture of centered Gaussians with variance $R_{G_{\Lambda},\xi}(v\lrarrow z_{\Lambda})$, so we conclude the desired tightness of $\phi(v)$.
\end{proof}

\begin{proof}[Proof of Theorem~\ref{thm:Z2}]
    Let $L=\diam(\Lambda)$ and let $\Lambda_L=[-L,\dots,L]^2\supseteq \Lambda$.
    Writing $\vec U=V+\vec W$ as usual,
    \begin{equation}
    \label{eq:Z2-comparison}
    \mu_{\bbZ^2_{\Lambda},\vec U}
    \preceq_{\con}
    \mu_{\bbZ^2_{\Lambda},V}
    \preceq_{\con}
    \mu_{\bbZ^2_{\Lambda_L},V}
    \preceq_{\con}
    \int \gamma_{\Lambda_L,\xi}
    ~\de\rho^{\otimes E(G_{\Lambda_L})}(\xi).
    \end{equation}
    Here the first and third comparisons come from Theorem~\ref{thm:domination-of-products}\ref{it:Gaussian-system-domination}, and the second from Corollary~\ref{cor:tilt-monotonicity} since $\mu_{\bbZ^2_{\Lambda},V}$ is just the centered Gaussian process $\mu_{\bbZ^2_{\Lambda_L},V}$ conditioned to lie in a linear subspace.

    We now proceed as in Theorem~\ref{thm:main} with $p>p_c(\bbZ^2)=1/2$.
    The same proof works, except the infinite cluster of $H_{\wh\xi}$ is not transient in \eqref{eq:resistance-computation}.
    Let $\cC$ be the infinite cluster of $H_{\wh\xi}$ for an extended $\wh\xi\sim\rho^{\otimes E(G)}$ as before, and $w\in \cC$ a closest point to $v$.
    Let $\cC_L$ be the connected component of $\cC\cap E(\Lambda_L)$ containing $w$.
    Then
    \begin{equation}
    \label{eq:resistances-Z2}
    R_{G_{\Lambda_L},\xi}(v\lrarrow z_{\Lambda_L})
    \leq 
    R_{G_{\Lambda_L},\xi}(v\lrarrow w)
    +
    R_{G_{\Lambda_L},\xi}(w\lrarrow z_{\Lambda_L})
    \leq 
    R_{G_{\Lambda_L},\xi}(v\lrarrow w)
    +
    C_{\rho,p}^{2} 
    R_{\cC_L}(w\lrarrow z_{\Lambda_L})
    .
    \end{equation}
    It is shown in \cite[Proposition 4.3]{boivin2013existence} (see also \cite{abe2015effective}) that almost surely\footnote{As stated the result is for a radius $L$ ball $\cB_L$ in the $\cC$-graph distance. Since $\cB_{L^2}\supseteq\cC_L$, \eqref{eq:Z2-cited-bound} follows with slightly larger constant $C$.},
    \begin{equation}
    \label{eq:Z2-cited-bound}
    \limsup_{L\to\infty}
    \frac{R_{\cC_L}(w\lrarrow z_{\Lambda_L})}{\log L}
    \leq C<\infty.
    \end{equation}
    Since the $\bbZ^2$-graph distance $d(v,w)$ is finite, the shortest path from $v$ to $w$ in $\bbZ^2$ lies in $\Lambda_L$ with probability $1-o_L(1)$.
    It follows that the laws of $R_{G_{\Lambda_L},\xi}(v\lrarrow w)$ are tight as $L$ varies.
    Combining completes the proof.
\end{proof}

\begin{proof}[Proof of Theorem~\ref{thm:moment-bounds}]
    With $V$ as in Proposition~\ref{prop:monotone-function-representation} but now with $\alpha>2$, using Theorem~\ref{thm:domination-of-products} yields as before
    \begin{equation}
    \label{eq:domination-as-before}
    \mu_{G_{\Lambda},\vec U}
    \preceq_{\con}
    \mu_{G_{\Lambda},V}
    \preceq_{\con}
    \int
    \gamma_{\Lambda,\xi}
    ~\de\rho^{\otimes E(G_{\Lambda})}(\xi).
    \end{equation}
    We apply Theorem~\ref{thm:convex-comparison} with $\nu=\rho^{\otimes E(G_{\Lambda})}$ and $\Phi_t=(x-t)_+$.
    Abusing the notation \eqref{eq:Q-def} so  $Q_{\xi}(v)=\bbE^{\gamma_{\xi}} [\phi(v)^2]$,
    \begin{equation}
    \label{eq:convex-tail-finish-moments}
    \bbE^{\xi\sim\nu}[\Phi_t(Q_{\xi}(v))]
    \leq 
    \bbE^{\xi_e\sim\rho}
    [\Phi_t(\xi_e^2 Q_1(v))]
    \leq 
    CQ_1(v)^{1-\frac{\alpha}{2}}
    t^{-\alpha/2}.
    \end{equation}
    Here the last step follows by applying Proposition~\ref{prop:power-tail-convex} to $\xi_e^2$ (which has sub-$\alpha/2$ tail since $\xi_e$ have uniformly sub-$\alpha$ tails) and scaling.
    Note that $Q_1(v)=R_{G_{\Lambda}}(v\lrarrow z_{\Lambda})$.
    Applying Proposition~\ref{prop:power-tail-convex} in the opposite direction, the previous display implies $Q_{\xi}(v)$ also has sub-$\alpha/2$ tail.
    The last implication of Proposition~\ref{prop:power-tail-convex} now completes the proof.
\end{proof}

\begin{proof}[Proof of Theorem~\ref{thm:stretched-exponential-bounds}]
    If $\beta=2$, just take $V(x)=\eps x^2/2$. Then even Corollary~\ref{cor:GCI} shows $\mu_{G_{\Lambda},\vec U}\preceq_{\con}\mu_{G_{\Lambda},V}$ is dominated by a constant multiple of an unweighted Gaussian free field as desired.

    For $\beta\in (0,2)$, set $\wt\beta=\frac{\beta}{2-\beta}$.
    For $c_*=c_*(\beta,Q_1(v))$ small enough and $K=K(\beta,Q_1(v),c_*)$ large enough,
    $
    \Phi_{\wt\beta}(x)
    \equiv
    \max\Big(
    K,e^{c_*|x|^{\wt\beta}}
    \Big)
    $
    is convex. Similarly to \eqref{eq:convex-tail-finish-moments}, with $\nu=\rho^{\otimes E(G_{\Lambda})}$ for $\rho$ now as in Propositions~\ref{prop:V-beta-K-GMM} and \ref{prop:power-growth-GMM},
    \begin{equation}
    \label{eq:convex-tail-finish-stretched}
    \bbE^{\xi\sim\nu}
    [\Phi_{\wt\beta}(Q_{\xi}(v))]
    \leq 
    \bbE^{\xi_e\sim\rho}
    [\Phi_{\wt\beta}(\xi_e^2 Q_1(v))]
    .
    \end{equation} 
    Proposition~\ref{prop:V-beta-K-GMM} ensures that $\xi_e^2$ has stretched sub-$\wt\beta$ tail.
    Hence by Proposition~\ref{prop:stretched-tail-convex}, the right-hand side of \eqref{eq:convex-tail-finish-stretched} is finite for small $c_*$.
    Applying Proposition~\ref{prop:stretched-tail-convex} in reverse, $Q_{\xi}(v)$ thus has stretched sub-$\wt\beta$ tail.
    Since the $\gamma_{\xi}$-law of $\phi(v)$ is centered Gaussian with variance $Q_{\xi}(v)$, the last implication of Proposition~\ref{prop:stretched-tail-convex} completes the proof.
\end{proof}

\subsection{Proofs for Strongly Transient Graphs and Maximum Field Values}

\begin{proof}[Proof of Theorem~\ref{thm:moment-bounds-Zd}]
    Strong transience of regular trees is obvious. We give the proof of both parts in the special case of $\bbZ^d$, the extension to general strongly transient graphs being identical. 
    After partitioning $\bbZ^d$ into $2d$ edge-disjoint transient subgraphs containing $\vec 0$, we apply the proof of Theorem~\ref{thm:moment-bounds} to each. This upper-bounds $Q_{\xi}(\vec 0)$ by the minimum of $2d$ independent sub-$\alpha/2$ variables, showing $Q_{\xi}(\vec 0)$ is sub-$d\alpha$, so $\phi(\vec 0)$ is sub-$2d\alpha$ by Proposition~\ref{prop:power-tail-convex}.
    
    We partition $\bbZ^d$ as follows. 
    For each $1\leq i\leq d$ write $\iota_i\in\bbZ^d$ for the $i$-th standard basis vector. 
    Given also $j\in\bbZ^d$ let $j\equiv \hat j$ with $1\leq \hat j\leq d$.
    Let
    $\iota_{i,j}=\iota_i$ if $i\leq \hat j$, and $\iota_{i,j}=-\iota_i$ if $i>\hat j$. 
    For $\eta\in \{\pm 1\}$, define the length $d$ path in $\bbZ^d$:
    \[
    P_{i,\eta}=
    \Big(
    \vec 0, 
    \eta \iota_i,
    \eta \big(\iota_i+\iota_{i,i+1}\big),
    \dots,
    \eta \big(\iota_i+\iota_{i,i+1}+\dots+\iota_{i,i+d-1}\big)
    \Big).
    \]
    For example if $d=4$, then $P_{3,-1}=\big((0,0,0,0),(0,0,-1,0),(0,0,-1,-1),(1,0,-1,-1),(1,1,-1,-1)\big)$.
    It is easy checked that the $2d$ paths $P_{i,\eta}$ are vertex disjoint except for $\vec 0$, hence edge disjoint. 
    Let $w_{i,\eta}\in \{\pm 1\}^d$ be the endpoint of $P_{i,\eta}$, with $j$-th coordinate $w_{i,\eta}(j)$. 
    Let $\cO_{i,\eta}\subseteq\bbZ^d$ be the induced subgraph on the shifted orthant 
    \[
    V(\cO_{i,\eta})
    =
    \{v\in\bbZ^d~:~ w_{i,\eta}(j) v_j\geq 1~\forall 1\leq j\leq d\}.
    \]
    It is well known that each orthant $\cO_{i,\eta}$ is a transient graph: for example with appropriate bounded edge weights, random walk on an orthant is just the coordinate-wise absolute value of simple random walk on $\bbZ^d$ (or see \cite[Section 5]{chiarini2016note}). 
    Therefore the edge-disjoint graphs $G_{i,\eta}=P_{i,\eta}\cup \cO_{i,\eta}$ are transient.
    Defining $G_{\Lambda,i,\eta}$ as the wiring of $\Lambda\cap G_{i,\eta}$ with boundary $\Lambda$, the $2d$ subgraphs $G_{\Lambda,i,\eta}\subseteq \bbZ^d_{\Lambda}$ are also edge-disjoint, and each $R_{G_{\Lambda,i,\eta}}(\vec 0\lrarrow z_{\Lambda})$ is uniformly bounded (independently of $\Lambda$).

    Given $\xi\in \bbR_+^{E(\bbZ^d)}$, let $\xi_{i,\eta}$ be its restriction to $G_{\Lambda,i,\eta}$, with $\xi_{i,\eta}(e)=\infty$ for $e\notin E(G_{\Lambda,i,\eta})$. 
    Then using the same notation as in \eqref{eq:convex-tail-finish-moments}, Rayleigh's monotonicity principle shows 
    \begin{equation}
    \label{eq:Q-min-Zd}
    Q_{\xi}(\vec 0)
    \leq 
    \min_{i,\eta} Q_{\xi_{i,\eta}}(\vec 0).
    \end{equation}
    We next use \eqref{eq:domination-as-before} as before, and apply the proof of Theorem~\ref{thm:moment-bounds} to each $(i,\eta)$.
    For $\xi\sim\rho^{\otimes E(\bbZ^d)}$, we thus deduce from \eqref{eq:Q-min-Zd} that $Q_{\xi}(\vec 0)$ is bounded above by the minimum of $2d$ \iid sub-$\alpha/2$ random variables, hence is sub-$d\alpha$. 
    The last part of Proposition~\ref{prop:power-tail-convex} again completes the proof.
\end{proof}

\begin{proof}[Proof of Corollary~\ref{cor:maximum-value-Zd}]
By transitivity, the values of $R_{\bbZ^d_{\Lambda}}(v\lrarrow z_{\Lambda})\leq R_{\bbZ^d}(v\lrarrow\infty)<\infty$ are uniformly bounded.
The upper tail tightness then follows by union bounding over $v\in\Lambda$ using Theorem~\ref{thm:moment-bounds-Zd}.

The lower bound on the support follows by \eqref{eq:fluctuation-LB}, which we now justify more carefully.
Let us fix $M$ to be chosen later and condition on the event $I_M=\{\sup_{w\in N(v)}|\phi(w)|\leq M\}$; recall $N(v)$ denotes the neighborhood of $v$.
Since $0\leq U_e(x)\leq \alpha\log(x+1)+C$ is monotone, the conditional probability that $|\phi(v)|\geq M$ is at least
\[
    \frac{\bbP[\phi(v)\in [M,3M]~|~I_M]}
    {\bbP[\phi(v)\in [-M,M]~|~I_M]}
    \geq
    \frac{
    \int_{M}^{3M}
    \exp\Big(
    -2d\big(C+\alpha\log\big(1+M+\phi(v)\big)\big)
    \Big)
    ~\de\phi(v)
    }
    {\int_{-M}^{M} 1~\de\phi(v)}
    \geq 
    e^{-2dC}(1+4M)^{-2d\alpha}
    .
\]
Here $M+\phi(v)\leq 4M$ upper-bounds the absolute difference between $\phi(v)$ and $\phi(w)\in [-M,M]$.
For any $\delta>0$, the right-hand side equals  $\frac{4d}{\delta |\Lambda|}$ if 
\[
M
=
\frac{e^{-C/\alpha}(\delta|\Lambda|/4d)^{1/2d\alpha} -1}{4}
.
\]
In particular consider an independent set $\cI\subseteq\Lambda$ of size $|\cI|\geq |\Lambda|/4d$.
Conditional on the restriction $\phi|_{\cI^c}$ to the complement of $\cI$ having maximum absolute value at most $M$, conditional independence implies
\[
    \bbP
    \big[\sup_{v\in\Lambda}|\phi(v)|\leq M~|~\phi|_{\cI^c}\big]
    \leq 
    \lt(1-\frac{4d}{\delta|\Lambda|}\rt)^{|\cI|}
    \leq 
    e^{-\Omega(1/\delta)}.
\]
Casework on $\max_{w\in \cI^c}|\phi(w)|$ implies $\bbP[\sup_{v\in\Lambda}|\phi(v)|\leq M]\leq 
e^{-\Omega(1/\delta)}$.
Finally note that $M\geq 
c(\delta,d)|\Lambda|^{1/2d\alpha}$ for large enough $|\Lambda|$ depending on $\delta$.
Since $\delta$ was arbitrary, we conclude tightness for the lower tail as desired.
\end{proof}

\begin{proof}[Proof of Corollary~\ref{cor:maximum-value}]
    By transitivity, the values of $R_{G_{\Lambda}}(v\lrarrow z_{\Lambda})\leq R_G(v\lrarrow\infty)<\infty$ are uniformly bounded.
    The upper bound on the support is then immediate from Theorem~\ref{thm:stretched-exponential-bounds} by union bounding over $v\in\Lambda$.
    
    The lower bound on the support follows by the analog of \eqref{eq:fluctuation-LB}. The only property of $G$ we use is that it has finite maximum degree $D<\infty$.
    Assume $|\Lambda|\geq 2$ for convenience of writing logarithms.
    Again fix $M$ to be chosen later and condition on the event $I_M=\{\sup_{w\in N(v)}|\phi(w)|\leq M\}$.
    Since $0\leq U_e(x)\leq Cx^{\beta}$ is monotone, the conditional probability that $|\phi(v)|\geq M$ is at least 
    \[
    \frac{\bbP[\phi(v)\in [M,3M]~|~I_M]}
    {\bbP[\phi(v)\in [-M,M]~|~I_M]}
    \geq
    \frac{
    \int_{M}^{3M}
    \exp\Big(-CD(M+\phi(v))^{\beta}\Big)
    ~\de\phi(v)
    }
    {\int_{-M}^{M} 1~\de\phi(v)}
    \geq 
    \exp\Big(-CD(4M)^{\beta}\Big).
    \]
    For any $\delta>0$, this quantity is at least $\frac{2D}{\delta|\Lambda|}$ if 
    $
    M=\frac{1}{4}\lt(\frac{\log(\delta|\Lambda|/2D)}{CD}\rt)^{1/\beta}
    $.
    In particular consider an independent set $\cI\subseteq\Lambda$ of size $|\cI|\geq |\Lambda|/2D$.
    Conditional on the restriction $\phi|_{\cI^c}$ to the complement of $\cI$ having maximum absolute value at most $M$, conditional independence gives
    \[
    \bbP
    \big[\sup_{v\in\Lambda}|\phi(v)|\leq M~|~\phi|_{\cI^c}\big]
    \leq 
    \lt(1-\frac{2D}{\delta|\Lambda|}\rt)^{|\cI|}
    \leq 
    e^{-\Omega(1/\delta)}.
    \]
    Casework on $\max_{w\in \cI^c}|\phi(w)|$ implies $\bbP[\sup_{v\in\Lambda}|\phi(v)|\leq M]\leq 
    e^{-\Omega(1/\delta)}$, finishing the lower tail as before.
\end{proof}

\subsection{Proofs for Generalized Zero Boundary Conditions and Iterated Laplacians}

\begin{proof}[Proof of Theorem~\ref{thm:free-boundary}]
    We describe the changes from Theorem~\ref{thm:main}. 
    Now instead of $G_{\Lambda}$ we work on $\Lambda$ itself, viewed as an induced subgraph.
    Let $\wh\mu_{\Lambda_i,v_0,V}$ be the free boundary Gibbs measure with $\phi(v_0)=0$.
    Similarly to \eqref{eq:Z2-comparison},
    \[
    \wh\mu_{\Lambda_i,S_i,v_0,U}
    \preceq_{\con}
    \wh\mu_{\Lambda_i,S_i,v_0,V}
    \preceq_{\con}
    \wh\mu_{\Lambda_i,v_0,V}
    \preceq_{\con}
    \int 
    \gamma_{\Lambda_i,\xi}
    ~\de \rho^{\otimes E(\Lambda_i)}.
    \]
    Let $\wh\xi\sim \rho^{\otimes E(G)}$ be as before and let $\xi^{(i)}$ be its restriction to $E(\Lambda_i)$.
    Let $H_{\wh\xi}\subseteq G$ be as before with transient infinite cluster $\cC$, now unique by assumption.
    Let $w_0,w$ be closest points in $\cC$ to $v_0$ and $v$ respectively. 
    Then $R_{G,\wh\xi}(w_0\lrarrow v_0)$ and $R_{G,\wh\xi}(w\lrarrow v)$ are almost surely finite, with the same law by transitivity of $G$.
    Similarly for $R_{H_{\wh\xi}}(w\lrarrow \infty)$ and 
    $R_{H_{\wh\xi}}(\infty\lrarrow w_0)$.
    Therefore $R_{G,\wh\xi}(w\lrarrow w_0)$ is tight since 
    \[
    R_{G,\wh\xi}(w\lrarrow w_0)
    \leq 
    C_{\rho,p}^2 R_{H_{\wh\xi}}(w\lrarrow w_0)
    \leq 
    C_{\rho,p}^2
    R_{H_{\wh\xi}}(w\lrarrow \infty)
    +
    C_{\rho,p}^2 R_{H_{\wh\xi}}(\infty\lrarrow w_0)
    .
    \]
    Finally since $\Lambda_i$ exhausts $V(G)$, the convergence $\lim_{i\to\infty} R_{H_{\wh\xi}\cap\Lambda_i}(w\lrarrow w_0)=R_{H_{\wh\xi}}(w\lrarrow w_0)$ holds almost surely, hence in distribution. The same reasoning applies for $w_0\lrarrow v_0$ and $w\lrarrow v$. Both conclusions now follow easily.
\end{proof}

\begin{proof}[Proof of Theorem~\ref{thm:free-boundary-tail}]
    For parts \ref{it:alpha-eps-free} and \ref{it:beta-eps-free}, the proof is essentially the same as Theorems~\ref{thm:moment-bounds} and \ref{thm:stretched-exponential-bounds}.
    Namely for $\Lambda$ large enough depending on $(v,v_0)$ one has
    $
    R_{\Lambda}(v\lrarrow v_0)
    \leq 
    2R_{G}(v\lrarrow v_0)
    <
    \infty$
    .
    Here the latter inequality follows from transience. Furthermore if $G$ is transitive then 
    \[
    R_{G}(v\lrarrow v_0)
    \leq 
    R_{G}(v_0\lrarrow\infty)
    +
    R_{G}(v\lrarrow\infty)
    =
    2R_{G}(v_0\lrarrow \infty)
    \]
    does not depend on $v$.
    The same argument using Theorem~\ref{thm:convex-comparison} proves both \ref{it:alpha-eps-free} and \ref{it:beta-eps-free}.
\end{proof}

\begin{proof}[Proof of Theorem~\ref{thm:higher-laplacian}]
    Parts~\ref{it:membrane-moment}, \ref{it:membrane-stretch} follow exactly the same way as Theorems~\ref{thm:moment-bounds}, \ref{thm:stretched-exponential-bounds}.
    $F$ still takes the form \eqref{eq:F-abstract-form}, where the terms $M_e$ or $M_v$ now incorporate the iterated Laplacian. Then $F(1)$ again gives the Gaussian model of the same problem, where localization is known from \cite{sakagawa2003entropic}. 
    The only other place the graph structure was used in Theorems~\ref{thm:moment-bounds}, \ref{thm:stretched-exponential-bounds} was the bound $R_{G_{\Lambda}}(v\lrarrow z_{\Lambda})\leq R_G(v\lrarrow \infty)$.
    This is really just the statement that in the Gaussian free field, conditional variances only decrease when some fields are pinned to zero.
    As a general fact about Gaussian processes, this remains valid for the Gaussian fields associated with $\Delta^j$.
    Otherwise, the proofs are unchanged.

    Likewise part~\ref{it:membrane-maximum} requires almost no changes either. The upper bound again follows by union bounding over $\Lambda$.
    For the lower bound, the same proof as in Corollary~\ref{cor:maximum-value} works, except the independent set $\cI$ should have minimum pairwise distance at least $j+1$ so that $\{\phi(v)~:~v\in \cI\}$ are conditionally independent given $\phi|_{\cI^c}$.
    This is just an independent set in the distance-at-most-$j$ graph $G_j\supseteq G$, which has maximum degree at most $D^j$.
\end{proof}

\subsection*{Acknowledgment}

We thank Ron Peled for introducing us to the problem of localization and for several enlightening conversations.
Thanks also to Volker Betz, Eric Thoma and Wei Wu for helpful discussions.

\footnotesize
\bibliographystyle{plain}
\bibliography{bib}

\end{document}